\documentclass{article}




\usepackage[final]{neurips_2025}


\usepackage[utf8]{inputenc} 
\usepackage[T1]{fontenc}    
\usepackage{hyperref}       
\usepackage{url}            
\usepackage{booktabs}       
\usepackage{amsfonts}       
\usepackage{nicefrac}       
\usepackage{microtype}      
\usepackage[dvipsnames]{xcolor}         

\usepackage{times}

\usepackage[ruled,vlined]{algorithm2e} 

\usepackage{graphicx}    
\usepackage{subcaption}  
\usepackage{float}       

\usepackage{amsmath}

\usepackage{bm}

\usepackage{graphicx}
\usepackage{booktabs} 
\usepackage{physics}

\usepackage{amssymb}
\usepackage{mathtools}
\usepackage{amsthm}

\theoremstyle{plain}                     
\newtheorem{theorem}{Theorem}[section]   
\newtheorem{corollary}[theorem]{Corollary}
\newtheorem{proposition}[theorem]{Proposition}

\theoremstyle{definition}                
\newtheorem{definition}[theorem]{Definition}

\theoremstyle{remark}                    

\usepackage{bm}
\usepackage{relsize}
\usepackage{appendix}

\usepackage{graphicx}
\usepackage{subcaption}  

\usepackage[capitalize,noabbrev]{cleveref}

\newcommand{\E}{\mathbb{E}}

\newcommand{\R}{\mathbb{R}}

\newcommand{\what}{\widehat{{w}}}

\newcommand{\argmin}{\operatornamewithlimits{argmin}}

\newcommand{\ho}{\mathrm{ho}}

\title{Optimal and Provable Calibration in High-Dimensional Binary Classification: Angular Calibration and Platt Scaling}

%

\author{%
  Yufan Li \\
  Department of Statistics\\
  Harvard University\\
  Cambrdige, MA 02138 \\
  \texttt{yufan\_li@g.harvard.edu} \\
  \And
  Pragya Sur \\
  Department of Statistics\\
  Harvard University \\
  Cambrdige, MA 02138 \\
  \texttt{pragya@fas.harvard.edu} \\
}

\begin{document}

\maketitle

\begin{abstract}
We study the fundamental problem of calibrating a linear binary classifier of the form \(\sigma(\hat{w}^\top x)\), where the feature vector \(x\) is Gaussian, \(\sigma\) is a link function, and \(\hat{w}\) is an estimator of the true linear weight $w^\star$. By interpolating with a noninformative \emph{chance classifier}, we construct a well-calibrated predictor whose interpolation weight depends on the angle \(\angle(\hat{w}, w_\star)\) between the estimator \(\hat{w}\) and the true linear weight \(w_\star\). We establish that this angular calibration approach is provably well-calibrated in a high-dimensional regime where the number of samples and features both diverge, at a comparable rate. The angle \(\angle(\hat{w}, w_\star)\)  can be consistently estimated. Furthermore, the resulting predictor is uniquely \emph{Bregman-optimal}, minimizing the Bregman divergence to the true label distribution within a suitable class of calibrated predictors.
Our work is the first to provide a calibration strategy that satisfies both calibration and optimality properties provably in high dimensions. Additionally, we identify conditions under which a classical Platt-scaling predictor converges to our Bregman-optimal calibrated solution. Thus, Platt-scaling also inherits these desirable properties provably in high dimensions. 
\end{abstract}

\section{Introduction}

Calibration of predictive models is a fundamental problem in statistics and machine learning, especially in applications that require reliable uncertainty quantification. A well-calibrated model ensures that its predicted probabilities align closely with true event probabilities—a property essential in fields such as medical decision-making \citep{begoli2019need,jiang2012calibrating}, meteorological forecasting \citep{brocker2009reliability, gneiting2005weather,degroot1983comparison,murphy1977reliability,murphy1973new}, self-driving systems \citep{michelmore2018evaluating}, and natural language processing \citep{nguyen2015posterior,guo2017calibration}.

Numerous algorithms have been proposed for calibrating the outputs of a trained model, including classical methods such as Platt scaling \citep{platt1999probabilistic,boken2021appropriateness,phelps2024using,gupta2023online}, histogram binning \citep{zadrozny2001obtaining,sun2024minimum}, isotonic regression \citep{zadrozny2002transforming,jiang2011smooth,berta2024classifier,henzi2021isotonic,kalai2009isotron}, and more recent approaches such as temperature scaling \citep{guo2017calibration,kull2019beyond}, ensemble-based methods \citep{lakshminarayanan2017simple,malinin2019ensemble,wen2020batchensemble,tran2020hydra}, and Bayesian strategies \citep{kristiadi2020being,clarte2023expectation}, among others. 

While extensive prior work has studied calibration 
\cite{kumar2019verified,sun2024minimum,gupta2020distribution,shabat2020sample,jung2021moment}, this literature primarily focuses on traditional asymptotic theories or finite-sample learning theoretic arguments. These approaches often overlook the impact of problem dimensionality, which is particularly relevant for high-dimensional settings where the number of features may be substantial.
Alternatively, a separate line of research has explored calibration within a high-dimensional proportional asymptotic regime, where the sample size \(n\) and the feature dimension \(d\) both diverge, at a comparable rate. This  proportional scaling regime has gained significant traction in modern statistics and machine learning. In statistics, its popularity stems from the fact that theories derived under this regime  capture high-dimensional phenomena observed in moderate to large sized datasets unusually well \cite{johnstone2009statistical,bean2013optimal,donoho2009message,thrampoulidis2014gaussian,zdeborova2016statistical, barbier2019optimal,sur2019modern,jiang2022new,montanari2024friendly,malekibridge,li2023random,li2024understanding}. Consequently, this has spurred the creation of innovative methods displaying remarkable practical performance  \cite{mondelli2021approximate,feng2022unifying,bellec2022biasing,li2023spectrum,song2024hede,luo2024roti}.  In machine learning, this regime has proven exceptionally valuable and effective in analyzing the behavior of modern neural networks and other interpolation learners under overparametrization
\cite{liang2020just, hastie2022surprises,liang2022precise, mei2022generalization,adlam2020neural,song2024generalization,patil2024optimal}. For binary classification in this proportional regime, a substantial line of work \cite{sur2019likelihood,sur2019modern,zhao2022asymptotic} establishes that classical logistic regression yields seriously biased estimates; building upon these, \cite{bai2021don} shows that logistic regression tends to be inherently overconfident, while \cite{clarte2023theoretical} discusses the impact of regularization under the same model. 
Finally, \cite{clarte2023expectation} introduces expectation consistency and derives a limiting calibration error formula as a function of the signal prior and other problem parameters.

Despite these advancements,  an approach that is provably calibrated in  high dimensions, without knowledge of the true signal prior, is missing. Moreover, there is a lack of principled understanding regarding optimal calibration strategies from among the available options.  Additionally, rigorous guarantees on the performance of classical calibration methods, such as Platt scaling, in modern high-dimensional scenarios is  notably absent from the literature. In this paper, we address these gaps. We consider the challenge of calibrating a binary linear predictor in a frequentist setting under a Gaussian design. Our contributions are three-fold: (i) we introduce a data-driven predictor that can provably calibrate in a broad class of high-dimensional binary classification problems; (ii) we show that our calibrated predictor is \emph{Bregman-optimal}, meaning it uniquely minimizes any Bregman divergence relative to the true label-generation probability; (iii) we establish conditions under which a classical Platt-scaled predictor converges to this Bregman-optimal calibrated solution, thereby formally showing that Platt scaling is both well-calibrated and Bregman optimal in our high-dimensional setting. Although we derive our theoretical results assuming Gaussian features, extensive recent universality results suggest that these should continue to hold for sufficiently light tailed distributions (see  Section \ref{sec:conc} for a discussion). We provide experiments that demonstrate this robustness to the Gaussian assumption (\Cref{fig:Rad} and \ref{fig:Unif}).

We construct our calibrated predictor by interpolating with an uninformative (“chance”) predictor, where the interpolation weight is determined by the angle \(\angle(\hat{w}, w_{\star})\) between the estimated linear weight \(\hat{w}\) and the true weight \(w_{\star}\). Our construction crucially leverages recent developments from the literature on observable estimation of unknown parameters in high dimensions. For instance, leveraging advances in \cite{javanmard2018,bellec2022observable,bellec2022biasing,bellec2023debiasing,celentano2023lasso,li2023spectrum}, we can show that the angle \(\angle(\hat{w}, w_{\star})\) is consistently estimable when \(n\) and \(d\) grow proportionally. To our knowledge, this is the first provable calibration method in a high-dimensional setting, and it uncovers a conceptual link between optimal calibration and \(\angle(\hat{w}, w_{\star})\): the poorer the alignment of $\what$ with $w_\star$, the greater the noise needed to be injected to prediction logits to ensure calibration.

\section{Setting}\label{Setting}
Suppose we observe i.i.d.~data \((y_i, x_i)\) satisfying  
\begin{equation}\label{lab}
    y_{i} \stackrel{\mathrm{iid}}{\sim} \operatorname{Bern}\left(\sigma\left(w_{\star}^{\top} x_{i}\right)\right), \quad i=1, \ldots, n,
\end{equation}
where \( \sigma: \mathbb{R} \to [0,1] \) denotes the link function and the covariates \( x_{i} \in \mathbb{R}^{d} \) are drawn independently as
$x_{i} \stackrel{\mathrm{iid}}{\sim} N(0, \Sigma)$, 
with \(\Sigma\) assumed to be known (say from a separate unlabeled dataset as in \cite{celentano2024correlation,celentano2023lasso}). The true linear weight \( w_\star \in \mathbb{R}^d \) is an arbitrary deterministic vector, and we assume without loss of generality that  
$
w_{\star}^{\top} \Sigma w_{\star} = \|w_{\star}\|_{\Sigma}^{2} = 1.
$
The training dataset is denoted as \( X = \left[x_{1}, \ldots, x_{n}\right]^{\top} \in \mathbb{R}^{n \times d} \) and \( y = \left[y_{1}, \ldots, y_{n}\right]^{\top} \in \mathbb{R}^{n} \).

To quantify the degree of miscalibration, we define the calibration error at level \( p \) for any predictor \( \hat{f} \) as  
\[
\Delta_{p}^{\mathrm{cal }}(\hat{f})=p-\mathbb{E}_{x_{\mathrm{new }}}\left[\sigma\left(w_{\star}^{\top} x_{\mathrm{new }}\right) \mid \hat{f}\left(x_{\mathrm{new }}\right)=p\right],
\]
where \( \mathbb{E}_{x_{\mathrm{new }}} \) denotes the expectation over \( x_{\mathrm{new }} \sim N(0,\Sigma) \). A predictor is said to be \emph{well-calibrated} if \( \Delta_{p}^{\mathrm{cal }}(\hat{f}) = 0 \) for all \( p \) in the range of \( \hat{f} \). Intuitively, this means that when the predictor assigns a probability \( p \) to label \( 1 \), the true probability of label \( 1 \) is indeed \( p \).

We consider the regularized M-estimator
\[
\widehat{w} = \underset{w}{\arg \min } \frac{1}{n} \sum_{i=1}^{n} \ell_{y_{i}}\left(w^{\top} x_{i}\right) + g(w),
\]
where \( g(\cdot) \) is a convex penalty and \( \ell(\cdot) \) is a convex loss function. In this setting, we consider a sequence of problem instances $\{y(d),X(d), w_{\star}(d)\}_{d\ge 1}$ such that $X(d)\in \R^{n(d)\times d}$ and $y(d)\in \R^{n(d)}$ generated from \eqref{lab}. It is well-known that in the special case where $\ell(\cdot)$ equals the logistic loss and $g(\cdot)$ is zero, the corresponding predictor \( \sigma(\hat{w}^\top x_{\mathrm{new}}) \) is grossly mis-calibrated in the high-dimensional regime \( \frac{n}{d} \to (0, +\infty) \) \cite{sur2019likelihood,sur2019modern,bai2021don, clarte2023theoretical}, even where it is well-defined and unique \cite{candes2020phase}. 

In what follows, we present, for the first time, a predictor that is provably well-calibrated  in this  regime (for general convex losses and penalties beyond the special case mentioned above). We achieve this through an angular calibration idea, and furthermore, establish 
that this is optimal in the sense that it minimizes any Bregman divergence to the  true label distribution. We conclude showing an interesting connection---Platt scaling converges to our angular predictor---and therefore is both provably well-calibrated and optimal in the aforementioned sense. 

\section{Introducing angular calibration}

Most calibration strategies \emph{adjust} a pre-trained predictor by learning a mapping \(F\colon u \mapsto F(u)\) of the logits \(\hat{w}^\top x_{\mathrm{new}}\). Platt scaling, for example, stipulates the parametric form \(F(u)=\sigma(Au+B)\), where \(A,B\in \R\) are fit on a holdout dataset. This raises a natural question:
\begin{center}
\emph{Among all well-calibrated predictors of the form \(F(\hat{w}^\top x_{\mathrm{new}})\), which one is ``the best''?}
\end{center}
In this section, we introduce a predictor with such an optimality property by interpolating between the prediction logits \(\hat{w}^\top x\) and an uninformative chance predictor. Specifically, we show that if the interpolation weight is determined by the angle between the estimator \(\widehat{w}\) and the true weight \(w_\star\), given by  
\begin{equation}\label{angledef}
    \theta_{*}=\operatorname{arccos}\left(\frac{\left\langle w_{\star}, \widehat{w}\right\rangle_{\Sigma}}{\|\widehat{w}\|_{\Sigma} \|w_{\star}\|_{\Sigma}}\right),
\end{equation}
then the resulting interpolated predictor minimizes any Bregman divergence to the true label distribution among all predictors of the form \(F(\hat{w}^\top x_{\mathrm{new}})\).
To the best of our knowledge, a predictor that is both provably calibrated and optimal (in the aforementioned sense) has not been previously introduced for high-dimensional problems.

Notably, our predictor uses the angle defined in \eqref{angledef}, thus to define a data-driven predictor, we require a consistent estimate of this angle. Fortunately, recent advances in the  high-dimensional literature (c.f.,~\cite{javanmard2018,bellec2022observable,bellec2022biasing,bellec2023debiasing,celentano2023lasso,li2023spectrum}) allow us to  estimate the inner product $\expval{\widehat{w}, w_\star}_\Sigma$, and therefore $\theta_*$,  when \(n\) and \(d\) grow proportionally. We discuss the details of this estimation scheme later in \Cref{mest}. For now, we present our angular calibration idea assuming that we have access to a consistent estimator
 $\hat{\theta}$ for $\theta_*$.

\begin{definition}(Angular Predictor)\label{dsdfsdfff}
Let
\begin{equation}\label{definterp}
    \hat{f}_{\mathrm{ang}}\left(\what^\top x_{\mathrm{new }}; \hat{\theta} \right)=\mathbb{E}_{Z}\left(\sigma\left(\cos \left(\hat{\theta}\right) \cdot\left( \frac{\what^\top x_{\mathrm{new }}}{\|\widehat{w}\|_{\Sigma}}\right)+\sin \left(\hat{\theta}\right) \cdot Z\right)\right)
\end{equation}
where $\hat{\theta}$ is a consistent estimator of $\theta_*$ defined as in \eqref{angest} and $\mathbb{E}_{Z}$ denotes expectation with respect to the \textit{Gaussian noise} $Z \sim N(0,1)$. We will later refer to $\hat{f}_{\mathrm{ang}}$ as the angular predictor for simplicity. 
\end{definition}

\Cref{mainThm} below shows that the angular predictor is  well-calibrated. We defer the proof to \Cref{pfmainThm} 
\begin{theorem}\label{mainThm}
    Assume the link function $\sigma$ is continuous. Then, the predictor $\hat{f}_{\mathrm{ang}}$ defined in \eqref{definterp} is well-calibrated as $d,n \rightarrow \infty, n/d \rightarrow (0,\infty)$. That is, for any $p$ contained in the range of $\sigma$,  we have that
    $$\Delta_{p}^{\mathrm{cal }}\left(\hat{f}_{\mathrm{ang}}\left(\cdot; \hat{\theta} \right) \right)=p-\mathbb{E}_{x_{\mathrm{new }}}\left[\sigma\left(w_{\star}^{\top} x_{\mathrm{new }}\right) \mid \hat{f}_{\mathrm{ang}}\left(\what^\top x_{\mathrm{new }}; \hat{\theta}\right)=p\right]\to 0,$$
    in probability where $\hat{\theta}$ is a consistent estimator for $\theta_\star$ (Cf. Proposition \ref{thmconsis}).
\end{theorem}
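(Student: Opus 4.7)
The essential ingredient is Gaussian conditioning at the test point. Condition on the training data, which renders $\widehat{w}$, $\theta_*$, and $\hat{\theta}$ deterministic while $x_{\mathrm{new}}\sim N(0,\Sigma)$ remains independent. Set $U=\widehat{w}^\top x_{\mathrm{new}}/\|\widehat{w}\|_\Sigma$ and $V=w_\star^\top x_{\mathrm{new}}$. Because $\|w_\star\|_\Sigma=1$ and $\widehat{w}/\|\widehat{w}\|_\Sigma$ is $\Sigma$-unit-normalized, both $U$ and $V$ are marginally $N(0,1)$, and their covariance is exactly $\langle \widehat{w},w_\star\rangle_\Sigma/\|\widehat{w}\|_\Sigma=\cos(\theta_*)$. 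Standard Gaussian regression therefore yields
$$V \;\stackrel{d}{=}\; \cos(\theta_*)\,U + \sin(\theta_*)\,Z,\qquad Z\sim N(0,1)\ \text{independent of}\ U.$$

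Define $g(u;\theta):=\mathbb{E}_Z[\sigma(\cos(\theta)u+\sin(\theta)Z)]$, so that $\hat{f}_{\mathrm{ang}}(\widehat{w}^\top x_{\mathrm{new}};\theta)=g(U;\theta)$. The tower property together with the representation above delivers the pointwise identity $\mathbb{E}[\sigma(V)\mid U]=g(U;\theta_*)$. Were the oracle angle $\theta_*$ available, the predictor $g(U;\theta_*)$ would thus be \emph{perfectly} calibrated: when $\sigma$ is strictly monotonic, $u\mapsto g(u;\theta_*)$ is strictly monotonic as well, so conditioning on $g(U;\theta_*)=p$ is equivalent to conditioning on $U=u_p(\theta_*):=g^{-1}(p;\theta_*)$, giving $\mathbb{E}[\sigma(V)\mid g(U;\theta_*)=p]=g(u_p(\theta_*);\theta_*)=p$.

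For the plug-in estimator $\hat{\theta}$, the same invertibility (now at $\hat{\theta}$) rewrites
$$\Delta_p^{\mathrm{cal}}\bigl(\hat{f}_{\mathrm{ang}}(\cdot;\hat{\theta})\bigr) \;=\; p - g\bigl(u_p(\hat{\theta});\theta_*\bigr),\qquad u_p(\hat{\theta})=g^{-1}(p;\hat{\theta}).$$
Continuity and boundedness of $\sigma$ imply, via dominated convergence, joint continuity of $(u,\theta)\mapsto g(u;\theta)$ on $\mathbb{R}\times[0,\pi]$; strict monotonicity in $u$ plus the implicit function theorem then yield continuity of $\theta\mapsto u_p(\theta)$. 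Consistency $\hat{\theta}\to\theta_*$ in probability from Proposition~\ref{thmconsis} therefore gives $u_p(\hat{\theta})\to u_p(\theta_*)$, hence $g(u_p(\hat{\theta});\theta_*)\to p$ and $\Delta_p^{\mathrm{cal}}\to 0$ in probability.

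The main obstacle is that the hypothesis only assumes $\sigma$ continuous, not strictly monotonic, in which case $g(\cdot;\hat{\theta})$ may fail to be invertible and the level set $\{u:g(u;\hat{\theta})=p\}$ need not be a singleton. I would handle this by rewriting
$$\Delta_p^{\mathrm{cal}} \;=\; \mathbb{E}\bigl[\,g(U;\hat{\theta})-g(U;\theta_*)\,\bigm|\,g(U;\hat{\theta})=p\bigr],$$
using the identity $\mathbb{E}[g(U;\hat{\theta})\mid g(U;\hat{\theta})=p]=p$. The right-hand side is bounded in absolute value by $\sup_{u\in K}|g(u;\hat{\theta})-g(u;\theta_*)|$ on any compact $K\subset\mathbb{R}$, which vanishes in probability by uniform continuity of $g$ in $\theta$ on compacts combined with $\hat{\theta}\to\theta_*$; a standard Gaussian tail bound on $U$ controls the complementary contribution. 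This delivers $\Delta_p^{\mathrm{cal}}\to 0$ without relying on any pointwise inversion of level sets.
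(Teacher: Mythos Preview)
Your high-level strategy matches the paper's: first establish exact calibration at the oracle angle $\theta_*$ via the Gaussian regression identity $V\stackrel{d}{=}\cos(\theta_*)U+\sin(\theta_*)Z$ and the tower property, then transfer to $\hat{\theta}$ using consistency. For the transfer step the paper takes a different route, writing $\mathbb{E}[X\mid \hat{Y}=p]$ and $\mathbb{E}[X\mid Y_*=p]$ in terms of joint densities $f_{X,\hat{Y}}$, $f_{X,Y_*}$ and arguing that these converge pointwise. Your identity $\Delta_p^{\mathrm{cal}}=\mathbb{E}[g(U;\hat{\theta})-g(U;\theta_*)\mid g(U;\hat{\theta})=p]$ is an attractive alternative that avoids densities altogether, and in the strictly monotone case your inversion argument is complete and clean.

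The gap is in your last paragraph, for general continuous $\sigma$. Splitting into $\{U\in K\}$ and $\{U\notin K\}$ and invoking ``a standard Gaussian tail bound on $U$'' does not work here, because you are bounding a \emph{conditional} expectation: the relevant quantity is $\mathbb{P}(U\notin K\mid g(U;\hat{\theta})=p)$, not $\mathbb{P}(U\notin K)$. The conditioning event $\{g(U;\hat{\theta})=p\}$ may force $U$ to lie entirely outside any pre-specified compact $K$ (for instance if the level $p$ is attained only for large $|u|$), in which case that conditional probability equals $1$, not a Gaussian tail. And you cannot fall back on a global bound, since $\sup_{u\in\mathbb{R}}|g(u;\hat{\theta})-g(u;\theta_*)|$ need not tend to $0$: the argument of $\sigma$ shifts by roughly $(\cos\hat{\theta}-\cos\theta_*)u$, which is unbounded in $u$. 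To close the gap you would need to localize the level set $\{u:g(u;\hat{\theta})=p\}$ uniformly for $\hat{\theta}$ near $\theta_*$; this is immediate under strict monotonicity, and would also follow from a mild extra hypothesis such as $\sigma$ having limits at $\pm\infty$, but it is not automatic for arbitrary continuous $\sigma$.
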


The above utilizes the result  that when $\hat{\theta}=\theta_*$ exactly, $\hat{f}_{\mathrm{ang}}\left(\cdot; \theta_* \right)$ is exactly well-calibrated (we state and  prove this formally in \Cref{mainThmPop}) and that $\hat{\theta}$ is consistent for $\theta_\star$. 
We will later show that $\hat{f}_{\mathrm{ang}}\left(\cdot; \hat{\theta} \right)\approx \hat{f}_{\mathrm{ang}}\left(\cdot; \theta_* \right)$ is in fact optimal in the sense that it minimizes any Bregman divergence to the true label distribution. The construction \eqref{definterp} admits an intuitive interpretation. By the basic trigonometric identity
$
\cos^2(\theta_*) + \sin^2(\theta_*) = 1,
$
we can see that the logits, that is, the argument of $\sigma(\cdot)$ in \eqref{definterp},  is an interpolation between the informative component $\what^\top x_{\mathrm{new}}$ and the noninformative Gaussian noise $Z$. Notice that when \(\widehat{w}\) is well aligned with \(w_\star\) (i.e., \(\cos^2(\theta_*) = 1\)), the angular predictor \(\hat{f}_{\mathrm{ang}}\) lies closer to the informative predictor \(\sigma(\widehat{w}^\top x_{\mathrm{new}})\). Conversely, when \(w_\star\) and \(\widehat{w}\) are orthogonal (i.e., \(\sin^2(\theta_*) = 1\)), \(\hat{f}_{\mathrm{ang}}\) defaults to the non-informative chance predictor \(\mathbb{E}[\sigma(Z)] = \mathbb{E}[\sigma(w_\star^\top x_{\mathrm{new}})]\). In other words,
\begin{center}
\emph{The poorer the alignment between \(w_\star\) and \(\widehat{w}\), the greater the magnitude of  noise  $Z$ required to maintain calibration.}

\end{center}
We will show in the next section that this angular interpolation idea leads to a uniquely Bregman optimal calibrated predictor. This provides the first calibration procedure that is calibrated and optimal in high dimensions, provably. Peusdocode for angular calibration, using angle estimator from \Cref{mest}, is included in \Cref{peudo}.

\section{Main Result I: Calibrating Optimally using Angular Calibration}

Before formally stating our results on optimality of angular calibration, we first define the following random probability vectors for label distribution,
\begin{equation}\label{probvec}
    q_{ \star}:=\mqty(\sigma(w_\star^\top x_\mathrm{new}) \\ 1- \sigma(w_\star^\top x_\mathrm{new})), \;\; \hat{q}_{ F}:=\mqty(F(\widehat{w}^\top x_\mathrm{new})\\ 1- F(\widehat{w}^\top x_\mathrm{new})), \;\; 
 \hat{q}_{\mathrm{ang}}(\hat{\theta}):=\mqty(\hat{f}_{\mathrm{ang}}(\what^\top x_\mathrm{new}; \hat{\theta})\\ 1- \hat{f}_{\mathrm{ang}}(\what^\top  x_\mathrm{new}; \hat{\theta}))
\end{equation}
where $F: \R \to [0,1]$ is any measurable function. Here, $q_{ \star}$ corresponds to the ground-truth probability distribution of the new label, $\hat{q}_{ F}$ the prediction probability distribution of an $F$-calibrated predictor, and $q_{\mathrm{ang}}$ the prediction probability distribution of our angular predictor. 

Next, we define the Bregman loss function.
\begin{definition}[Bregman Loss Functions]
     Let $\phi: \mathbb{R}^2 \mapsto \mathbb{R}$ be a strictly convex differentiable function. Then, the Bregman loss function $D_\phi: \mathbb{R}^2 \times \mathbb{R}^2 \mapsto$ $\mathbb{R}$ is defined as
\begin{equation*}
D_\phi(x, y)=\phi(x)-\phi(y)-\langle x-y, \nabla \phi(y)\rangle .
\end{equation*}
\end{definition}
The Bregman loss function class covers common losses  such as the squared loss $D_\phi(x, y):=\norm{x-y}_2^2$ and Kullback-Liebler (KL) divergence $D_\phi(x, y)=\sum_{j=1}^2 x_j \log \left(x_j / y_j\right)$ between two probability vectors $x,y$.  


Theorem \ref{optimalThm} states that the prediction probability \(\hat{q}_{\mathrm{ang}}\) generated by the angular predictor uniquely minimizes \textit{any} Bregman loss against the ground-truth probability vector \(\hat{q}_{ \star}\) within the class of \(q_{ F}\) for any \(F\). We defer the proof of the Theorem below to \Cref{proofoptimalThm}.

\begin{theorem}[Optimality of angular predictor]\label{optimalThm}
Let $\phi: \mathbb{R}^2 \mapsto \mathbb{R}$ be any strictly convex differentiable function, and let $D_\phi$ be the corresponding Bregman loss function. Let $\E_{x_\mathrm{new}}[\phi(q_\star)]$ be finite. Then, the expected Bregman loss $\E_{x_\mathrm{new}}\left[D_\phi(q_{ \star}, \hat{q}_{ F})\right]$ admits a unique minimizer (up to a.s. equivalence) among all $q_{ F}, \forall F \in \mathcal{F}:=\{f: \R \to [0,1]\}$. Let this minimizer be $F_\star = \arg \min _{F\in \mathcal{F}} \E_{x_\mathrm{new}}\left[D_\phi(q_{ \star}, \hat{q}_{ F})\right]$. Further suppose that the link function $\sigma$ is continuous. We then have that as $n,d \to \infty$, we have
\begin{equation*}
\begin{aligned}
    \norm{\hat{q}_{\mathrm{ang}} (\hat{\theta})-\hat{q}_{F_\star}(\what^\top x_{\mathrm{new}})}_2^2 \to 0
\end{aligned}
\end{equation*}
in probability. That is, the label prediction probability vector from angular calibration converges to the optimal label prediction probability vector given by $F_\star$. 
\end{theorem}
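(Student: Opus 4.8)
The plan is to reduce the optimality statement to two ingredients: (i) a pointwise characterization of the Bregman-optimal $F_\star$ as a conditional expectation, and (ii) the fact, established in Theorem \ref{mainThm} and (to be proven) Theorem \ref{mainThmPop}, that the angular predictor with the exact angle is well-calibrated and is itself of the form $F(\what^\top x_{\mathrm{new}})$. For step (i), I would use the standard property of Bregman divergences that for a fixed first argument the map $y \mapsto \E[D_\phi(X,y)]$ over a sub-$\sigma$-algebra is minimized at the conditional expectation $\E[X \mid \cdot]$; concretely, conditioning on the value $u = \what^\top x_{\mathrm{new}}$, the minimizer is
\[
F_\star(u) = \E_{x_{\mathrm{new}}}\!\left[\sigma(w_\star^\top x_{\mathrm{new}}) \,\middle|\, \what^\top x_{\mathrm{new}} = u\right],
\]
with uniqueness (up to a.s.\ equivalence) following from strict convexity of $\phi$. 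This is a soft argument once one notes $\E_{x_{\mathrm{new}}}[\phi(q_\star)]$ finite guarantees the relevant quantities are integrable and the conditional expectation is well defined. I would state this as a lemma and give the short proof: expand $D_\phi(q_\star, \hat q_F)$, take conditional expectation given $\what^\top x_{\mathrm{new}}$, and observe the cross term vanishes exactly at $\hat q_F = \E[q_\star \mid \what^\top x_{\mathrm{new}}]$ by the defining property of conditional expectation, leaving a nonnegative remainder that is strictly positive unless $F = F_\star$ a.s.

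The second step is to compute $F_\star(u)$ explicitly under the Gaussian design. Since $(w_\star^\top x_{\mathrm{new}}, \what^\top x_{\mathrm{new}})$ is jointly Gaussian with $w_\star^\top x_{\mathrm{new}} \sim N(0,1)$ (using $\|w_\star\|_\Sigma = 1$) and correlation $\cos\theta_\star$ by the definition \eqref{angledef}, the conditional law of $w_\star^\top x_{\mathrm{new}}$ given $\what^\top x_{\mathrm{new}} = u$ is Gaussian with mean $\cos(\theta_\star)\, u/\|\what\|_\Sigma$ and variance $\sin^2(\theta_\star)$. Therefore
\[
F_\star(u) = \E_Z\!\left[\sigma\!\left(\cos(\theta_\star)\cdot \frac{u}{\|\what\|_\Sigma} + \sin(\theta_\star)\cdot Z\right)\right] = \hat f_{\mathrm{ang}}(u;\theta_\star),
\]
i.e.\ the Bregman-optimal $F_\star$ is \emph{exactly} the angular predictor evaluated at the true angle. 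This identifies the target and shows the population-level angular predictor is not just calibrated but Bregman-optimal — the optimality and calibration coincide because calibration here is equivalent to being the conditional expectation, which is what Bregman divergences single out.

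The remaining work is the high-dimensional convergence $\|\hat q_{\mathrm{ang}}(\hat\theta) - F_\star(\what^\top x_{\mathrm{new}})\|_2^2 \to 0$ in probability, which amounts to controlling the discrepancy $\hat f_{\mathrm{ang}}(\what^\top x_{\mathrm{new}}; \hat\theta) - \hat f_{\mathrm{ang}}(\what^\top x_{\mathrm{new}}; \theta_\star)$. Here I would use consistency of $\hat\theta$ for $\theta_\star$ (Proposition \ref{thmconsis}) together with a continuity/Lipschitz argument in the angle: for fixed $u$ and fixed scale $\|\what\|_\Sigma$, the map $\theta \mapsto \E_Z[\sigma(\cos\theta\cdot u/\|\what\|_\Sigma + \sin\theta\cdot Z)]$ is continuous (and in fact Lipschitz away from degenerate angles) since $\sigma$ is bounded in $[0,1]$ and $\E_Z$ smooths out any roughness of $\sigma$. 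Because $\hat f_{\mathrm{ang}}$ takes values in $[0,1]$, the squared difference is bounded, so I only need convergence in probability of the argument, which follows from $\hat\theta \to_p \theta_\star$ by the continuous mapping theorem applied conditionally on the (random but $O_p(1)$) values of $\what^\top x_{\mathrm{new}}$ and $\|\what\|_\Sigma$. I expect the main obstacle to be handling the dependence on the random normalization $\|\what\|_\Sigma$ and the random logit $\what^\top x_{\mathrm{new}}$ uniformly enough — one must ensure these do not blow up (they are $O_p(1)$ by standard high-dimensional M-estimator concentration, which the paper invokes elsewhere) and that the continuity in $\theta$ is not destroyed near $\theta_\star = 0$ or $\pi/2$; a clean way around the latter is to note that the Gaussian smoothing $\E_Z$ makes $\hat f_{\mathrm{ang}}$ jointly continuous in $(\theta, u, \|\what\|_\Sigma)$ on the relevant compact sets, so a uniform-continuity plus concentration argument closes the gap without delicate casework.
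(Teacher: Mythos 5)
Your proposal follows essentially the same route as the paper's proof: characterize the Bregman-optimal $F_\star$ as the conditional expectation $\E[q_\star \mid \what^\top x_{\mathrm{new}}]$ (the paper cites Theorem~1 of Banerjee et al.\ for this step rather than re-deriving it), identify it with $\hat{f}_{\mathrm{ang}}(\cdot;\theta_\star)$ via the Gaussian conditional distribution \eqref{conditionalid} (the paper records this as a separate Theorem~\ref{optimalThmPop}), and then pass from $\theta_\star$ to $\hat\theta$ by consistency plus continuity. The paper dispatches the final step with a one-line appeal to the continuous mapping theorem, whereas you elaborate on tightness of $(\what^\top x_{\mathrm{new}}, \|\what\|_\Sigma)$ and uniform continuity in $\theta$ — that extra care is reasonable and fills in what the paper leaves implicit, but it does not change the structure of the argument.
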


We note that as $\hat{\theta}=\theta_*$, $\hat{q}_{\mathrm{ang}} (\hat{\theta})$ precisely attains the optimal solution $F_*(\what^\top x_{\mathrm{new}})$. We defer the technical statement to \Cref{optimalThmPop} in \Cref{proofoptimalThm}.

\section{Main Result II: Platt scaling is provably calibrated and Bregman-optimal}

\begin{figure}
    \centering
    \includegraphics[width=0.7\linewidth]{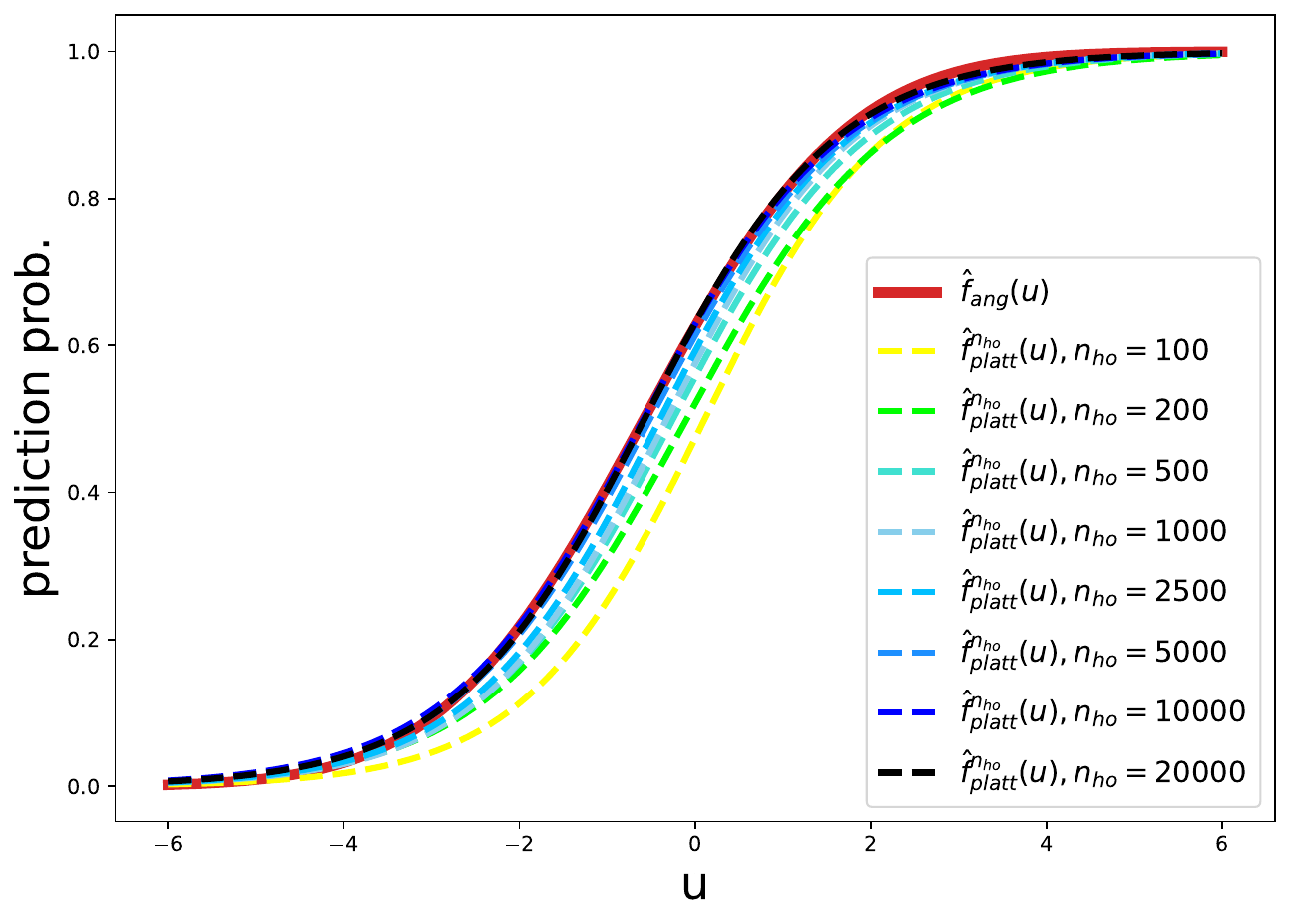}
    \caption{Platt scaling of a logistic ridge predictor converges to angular calibration predictor, as holdout set size increases. The plot is generated with Gaussian data with covariance $\Sigma=\frac{1}{d} \bar{\Sigma}$ where $\bar{\Sigma}_{kl}=0.5^{|k-l|}, \forall k,l \in \{1,...,d\}$, sigmoid link function in a data deficient setting where $n=1000, p=2000$. See more details in \Cref{simulation}}
    \label{fig:asymp}
\end{figure}

Platt scaling is arguably the most widely used calibration method in modern machine learning, yet its theoretical properties in high-dimensional settings remain unexplored. In this section, we identify conditions under which Platt scaling converges to our angular predictor, and is therefore well-calibrated and Bregman-optimal in high dimensions.

 Platt scaling finds a mapping $F$ of the prediction logits $\what^\top x_{\mathrm{new}}$ by minimizing the log-likelihood on a holdout dataset. In this section, we specifically consider the setting where we have a holdout dataset $(x_{\ho, i}, y_{\ho, i})_{i=1}^{n_{\ho}}$ and the negative log-likelihood
\begin{equation}\label{sampleloss}
    \hat{\ell}_{n_{\ho}}(F) := \sum_{i=1}^{n_{\ho}}-y_{\ho,i}\log(F(\widehat{w}^\top x_{\ho, i}))-(1-y_{\ho,i})\log(1-F(\widehat{w}^\top x_{\ho, i})).
\end{equation}
The Platt calibration procedure then searches for a mapping $F$ within some hypothesis class $\mathcal{F}_{\mathrm{platt}}$ that minimizes the negative log-likelihood. Elementary asymptotic theory then shows that as $n_{\ho}\to\infty$ (i.e. the holdout set is sufficiently large), $\hat{L}(\theta)$ in \eqref{sampleloss} converges to the population loss 
\begin{equation}\label{poploss}
\ell^\star(F)=\E_{x_\mathrm{new}}\left[D_{\mathrm{KL}}\qty(\mqty(\sigma(w_\star^\top x_\mathrm{new}) \\ 1- \sigma(w_\star^\top x_\mathrm{new})) \bigg \| \mqty(F(\widehat{w}^\top x_\mathrm{new})\\ 1- F(\widehat{w}^\top x_\mathrm{new})))\right]
\end{equation}
almost surely up to a constant. This is exactly the argument of the Bregman loss $\E_{x_\mathrm{new}}\left[D_\phi(q_{ \star}, \hat{q}_{ F})\right]$ from \Cref{optimalThm} for $\phi$ specialized as the negative Shannon entropy. That is, such calibration procedures are essentially trying to optimizing the Bregman loss but within the restricted hypothesis class. 

This naturally raises the question of whether calibration procedures such as Platt scaling can achieve the optimal Bregman loss, say in the limit of a sufficiently large holdout set. \Cref{plattjust} shows that, if \(\sigma\) is a probit link function (or is closely approximated by one up to an affine transformation---for instance, \(\mathrm{sigmoid}(x) \approx \Phi(\sqrt{\pi/8}\,x)\)), and if the negative log-likelihood \eqref{sampleloss} is minimized over the hypothesis class of the form
$\sigma(Au+B), A,B\in \R$
then the resulting Platt-scaled predictor converges to our angular predictor as \(n_{\ho}\to\infty\). Combining this connection with our results for the predictor $\hat{f}_{\mathrm{ang}} (u; \theta_*)$ which is our angular predictor $\hat{f}_{\mathrm{ang}} (u; \hat{\theta})$ with $\hat{\theta}=\theta_*$ exactly. As mentioned previously (see also \Cref{mainThmPop} and \Cref{optimalThmPop} in Appendix), $\hat{f}_{\mathrm{ang}} (u; \theta_*)$ is exactly calibrated and Bregman-optimal, which shows that Platt scaling is both provably calibrated and Bregman optimal. This offers the first formal high-dimensional guarantees of this kind for the widely well-known Platt scaling procedure. We defer the proof of the Theorem below to \Cref{proofplattjust}.

\begin{theorem}\label{plattjust}
    Consider the predictor $\hat{f}_{\mathrm{platt}}^{n_{\mathrm{ho}}}(u)$ calibrated by the Platt scaling procedure, that is,
    \begin{equation}\label{wer}
    \begin{aligned}
    &\hat{f}_{\mathrm{platt}}^{n_{\mathrm{ho}}}(\widehat{w}^\top x_{\mathrm{new}})=\sigma (\hat{A}^{n_{\mathrm{ho}}}\cdot \widehat{w}^\top x_{\mathrm{new}}+\hat{B}^{n_{\mathrm{ho}}}), \\
    &\text{ with }\hat{A}^{n_{\mathrm{ho}}}, \hat{B}^{n_{\mathrm{ho}}} = \argmin_{(A,B)\in \mathcal{H}} \hat{\ell}_{n_{\ho}} \qty(u\mapsto \sigma(Au+B))
\end{aligned}
\end{equation}
for $\hat{\ell}_{n_{\ho}}(\cdot)$ defined in \eqref{sampleloss}. If the link function $\sigma$ satisfies $\sigma(x)=\Phi(a\cdot x+b)$ for some $a\in \R \setminus \{0\},b\in \mathbb{R}$ and the point $(A_*, B_*)$ defined in \eqref{ABstardef} is contained in a compact subset $\mathcal{H} \subset \R^2$, the angular predictor defined in \eqref{definterp} satisfies
    \begin{equation}\label{correctform}
        \hat{f}_{\mathrm{ang}} (u; \theta_*)=\sigma \qty(A_* \cdot u+B_*) \in \mathcal{F}_{\mathrm{platt}},
    \end{equation}
    where 
    \begin{equation}\label{ABstardef}
        A_*=\frac{\cos(\theta_*)}{\norm{\hat{w}}_\Sigma \sqrt{1+a^2 \sin^2(\theta_*)}}, \qquad  B_*=\frac{b}{a}\qty(\frac{1}{\sqrt{1+a^2 \sin^2(\theta_*)}}-1)
    \end{equation}
    and $\mathcal{F}_{\mathrm{platt}}=\{u \mapsto \sigma(Au+B): A,B\in \R \}$.
    Moreover, as $n_{\mathrm{ho}}\to \infty$, we have that $\hat{A}^{n_{\mathrm{ho}}} \to A_*, \hat{B}^{n_{\mathrm{ho}}} \to B_*$ in probability and 
    \begin{equation}\label{conv}
       \sup_{u \in \R} \abs{\hat{f}_{\mathrm{platt}}^{n_{\mathrm{ho}}}(u) -\hat{f}_{\mathrm{ang}} (u; \theta_*)}\to 0
    \end{equation}
    in probability. Here, in-probability convergence is with respect to the randomness of $\{(x_{\ho, i}, y_{\ho, i})_{i=1}^{n_{\ho}}\}$. 
\end{theorem}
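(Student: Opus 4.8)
The plan is to establish the statement in three stages: first the exact algebraic identity \eqref{correctform}, then the convergence $(\hat A^{n_\ho},\hat B^{n_\ho})\to(A_*,B_*)$, and finally the uniform-in-$u$ conclusion \eqref{conv}. For the first stage I would start from the definition \eqref{definterp} and exploit that with a probit link $\sigma(x)=\Phi(ax+b)$ the inner Gaussian expectation has a closed form: for fixed $m$, $\E_Z[\Phi(a(cm+sZ)+b)]=\Phi\!\big((acm+b)/\sqrt{1+a^2s^2}\big)$, using the standard Gaussian convolution identity $\E_Z[\Phi(\alpha+\gamma Z)]=\Phi(\alpha/\sqrt{1+\gamma^2})$. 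Substituting $c=\cos\theta_*$, $s=\sin\theta_*$, $m=\what^\top x_\mathrm{new}/\|\what\|_\Sigma$ and matching the result to $\Phi(a(A_* u + B_*)+b)=\sigma(A_* u+B_*)$ pins down $A_*$ and $B_*$ exactly as in \eqref{ABstardef}; this is a routine rearrangement. This also shows $\hat f_{\mathrm{ang}}(\cdot;\theta_*)\in\mathcal{F}_{\mathrm{platt}}$.

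For the second stage I would use the classical M-estimation/consistency argument for the population log-likelihood. By elementary LLN reasoning (uniformly over the compact set $\mathcal H$, which holds because $(A,B)\mapsto\sigma(Au+B)$ and its log are continuous and the relevant moments of $\what^\top x_{\ho,i}$ are finite — here $\what$ is fixed given the training data), $\tfrac{1}{n_\ho}\hat\ell_{n_\ho}(u\mapsto\sigma(Au+B))$ converges uniformly on $\mathcal H$, almost surely, to the population risk $\ell^\star(\sigma(Au+B))$, which as noted in the text equals $\E_{x_\mathrm{new}}[D_\phi(q_\star,\hat q_F)]$ up to an additive constant (with $\phi$ the negative Shannon entropy). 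By \Cref{optimalThm}, this population risk has a unique minimizer $F_*$ over all measurable $F$; since by stage one $\sigma(A_* u+B_*)=\hat f_{\mathrm{ang}}(u;\theta_*)$ coincides (a.s. in $x_\mathrm{new}$) with $F_*$ on the support of $\what^\top x_\mathrm{new}$, and since within the probit family $(A,B)\mapsto\sigma(Au+B)$ is injective (as a function on $\R$) whenever $A\neq 0$, the minimizer of the population risk over $\mathcal H$ is the unique point $(A_*,B_*)$. Standard argmin-consistency (uniform convergence + unique well-separated minimizer on a compact set) then gives $(\hat A^{n_\ho},\hat B^{n_\ho})\to(A_*,B_*)$ in probability.

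For the third stage, convergence of the parameters transfers to uniform convergence of the predictors: writing $\hat f_{\mathrm{platt}}^{n_\ho}(u)-\hat f_{\mathrm{ang}}(u;\theta_*)=\sigma(\hat A^{n_\ho}u+\hat B^{n_\ho})-\sigma(A_* u+B_*)$, I would not bound this naively (the difference $|\hat A^{n_\ho}u - A_* u|$ is unbounded in $u$), but instead use that $\sigma=\Phi(a\cdot+b)$ is bounded with $\sigma(t)\to\{0,1\}$ as $t\to\mp\infty$ together with monotonicity: split $\R$ into a large compact interval $[-M,M]$, where Lipschitz continuity of $\sigma$ and $(\hat A^{n_\ho},\hat B^{n_\ho})\to(A_*,B_*)$ give a uniform bound, and the two tails, where both predictors are within $\varepsilon$ of the same limit ($0$ or $1$) once $M$ is large and $\hat A^{n_\ho}$ is close enough to $A_*\neq 0$ so that the sign and growth of the argument are controlled. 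Taking $M$ large then $n_\ho$ large yields \eqref{conv}.

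The main obstacle I anticipate is the identification argument in stage two — specifically, arguing that the minimizer of the population log-likelihood \emph{restricted to the parametric class $\mathcal F_{\mathrm{platt}}$} coincides with the global Bregman minimizer $F_*$ from \Cref{optimalThm}, rather than merely being the best parametric approximation to it. This works precisely because the probit assumption makes $\hat f_{\mathrm{ang}}(\cdot;\theta_*)$ itself lie in $\mathcal F_{\mathrm{platt}}$ (stage one), so the global optimum is attained inside the class; the role of the compactness hypothesis $(A_*,B_*)\in\mathcal H$ is then exactly to make the argmin-consistency machinery applicable. A secondary technical point is handling the tails uniformly in stage three, which requires $A_*\neq 0$ — guaranteed here since $\cos\theta_*\neq 0$ generically, though one should note the degenerate case $\theta_*=\pi/2$ (where $\hat f_{\mathrm{ang}}$ is constant) separately or exclude it.
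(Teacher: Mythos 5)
Your proposal is correct and reaches the right conclusions, but it takes a genuinely different technical route in stage two and is more careful than the paper in stage three. Stage one (using the Gaussian-probit identity $\E_Z[\Phi(\alpha+\gamma Z)]=\Phi(\alpha/\sqrt{1+\gamma^2})$) is identical to the paper's argument. In stage two, both you and the paper use the Bregman-optimality result to identify $(A_*,B_*)$ as the population minimizer, but the machinery differs: the paper observes that $-\log\Phi$ and $-\log(1-\Phi)$ are strictly convex, so $\hat\ell_{n_\ho}(A,B)$ is convex in $(A,B)$; pointwise LLN then upgrades to uniform convergence on compacts for free via Rockafellar's convexity theorem (\Cref{rock}), and strict convexity of the population risk $\ell^\star$ gives the well-separation hypothesis of van der Vaart's theorem directly. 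You instead invoke a generic uniform LLN over the compact $\mathcal H$ (which requires checking an integrable envelope for the log-loss against the Gaussian logits — fine, but an extra step the paper sidesteps) and then deduce well-separation from compactness plus continuity plus uniqueness. Both routes work; the paper's convexity argument is cleaner and more self-contained. In stage three your compact-interval-plus-tails decomposition is substantially more careful than the paper, which merely asserts that the uniform convergence in \eqref{conv} ``follows immediately.'' Your observation that the tail control requires $A_*\neq 0$ is a genuine point: if $\cos(\theta_*)=0$ then $\hat f_{\mathrm{ang}}(\cdot;\theta_*)$ is a constant in $(0,1)$ while $\hat f_{\mathrm{platt}}^{n_\ho}$ has full range $(0,1)$ whenever $\hat A^{n_\ho}\neq 0$, so $\sup_u|\hat f_{\mathrm{platt}}^{n_\ho}(u)-\hat f_{\mathrm{ang}}(u;\theta_*)|$ stays bounded away from zero and \eqref{conv} fails; the parametric convergence $(\hat A^{n_\ho},\hat B^{n_\ho})\to(A_*,B_*)$ still holds, only the uniform transfer breaks. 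This is an implicit assumption ($\theta_*\neq\pi/2$) that the paper leaves unstated.
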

To be clear, the above theorem considers the asymptotics in the holdout set $n_{\ho}$ for a fixed sample size and dimension $n,d$ of the training dataset. We illustrate \Cref{plattjust} in \Cref{fig:asymp} (see \Cref{simulation} for detailed settings) where the solid red line plots our angular predictor $u\mapsto\hat{f}_{\mathrm{ang}}(u)$ defined in \eqref{definterp} and the dashed lines plot the predictor $u\mapsto \hat{f}_{\mathrm{platt}}^{n_{\mathrm{ho}}}(u)$ calibrated by Platt scaling on increasingly large holdout sets $n_{\mathrm{ho}}$. We observe that the Platt scaling predictors indeed converge to our angular predictor as the holdout set sizes $n_{\mathrm{ho}}$ increase.

\section{Consistent angle estimation }\label{mest}
Observe that the angular predictor \(\hat{f}_{\mathrm{ang}}\) depends on the unobserved quantity \(\langle w_{\star}, \widehat{w} \rangle_{\Sigma}\). Using recent advancements from \Cref{bellec} we are able to provide a consistent estimator for this quantity. For simplicity, we outline the estimation procedure here for a twice-differentiable loss function $\ell$ and strongly convex, twice-differentiable penalty $g$; analogous results for unregularized M-estimation and other losses/penalties found in \cite{bellec2022observable}. We note that this result is part of a long line of development in observable estimation of unknown quantities in high dimensions  \cite{javanmard2018,bellec2022biasing,bellec2023debiasing,celentano2023lasso,li2023spectrum}.

A data driven estimator for $\left\langle w_{\star}, \widehat{w}\right\rangle_{\Sigma}^2$ proposed in \cite{bellec2022observable} is:
\begin{equation}\label{bellec}
    \hat{a}_{*}^2=\frac{\left(\frac{\hat{v}}{n}\|X \widehat{w}-\hat{\gamma} \hat{\psi}\|^{2}+\frac{1}{n} \hat{\psi}^{\top} X \widehat{w}-\hat{\gamma}\hat{r}^2\right)^{2}}{\frac{1}{n^{2}}\left\|\Sigma^{-\frac{1}{2}} X^{\top} \hat{\psi}\right\|^{2}+\frac{2 \hat{v}}{n} \hat{\psi}^{\top} X \widehat{w}+\frac{\hat{v}^{2}}{n}\|X \widehat{w}-\hat{\gamma} \hat{\psi}\|^{2}-\frac{d}{n}\hat{r}^2}
\end{equation}
where $\hat{\psi} \in \mathbb{R}^{n}$ is the vector with components $\hat{\psi}_{i}=-\ell_{i}^{\prime}\left(x_{i}^{\top} \widehat{w}\right)$, $\hat{v}=\frac{1}{n} \operatorname{Tr}\left(D-D X \hat{H} X^{T} D\right), \hat{\gamma}=$ $\operatorname{Tr}\left(X \hat{H} X^{\top} D\right)$ for $D=\operatorname{diag}\left(\ell_{y}^{\prime \prime}(X \widehat{w})\right)$ and $\hat{H}=\left(X^{\top} D X+n \nabla^{2} g(\widehat{w})\right)^{-1}$ and $ \hat{r}=(\frac{\norm{\hat{\psi}}^2}{n})^{1/2}$. It can be shown that $\abs{\hat{a}_*^2-\left\langle w_{\star}, \widehat{w}\right\rangle_{\Sigma}^2} \to 0$ in suitable high-dimensional sense. We refer the technical statement to \Cref{bellecthm} in \Cref{complecons}.

To estimate $\left\langle w_{\star}, \widehat{w}\right\rangle_{\Sigma}$, we also need to estimate its sign. We require reserving a constant fraction $n_{\ho}=\alpha \cdot n$ of the $n$ training data for the sign estimator,
\begin{equation}\label{signest}
    \widehat{\operatorname{sgn}} := \operatorname{sign}\left( \sum_{i=1}^{n_{\mathrm{ho}}} \widehat{w}^{\top} x_{i}^{\mathrm{ho}} \cdot y_{i}^{\mathrm{ho}}\right).
\end{equation}
It can be shown that the probability of wrong sign identification using $\widehat{\operatorname{sgn}}$ decreases exponentially with $n_{\mathrm{ho}}$. We defer the proof to \Cref{signestsec}.
\begin{proposition}\label{consistThm}
Suppose that $\sigma^{\prime}(x)$ is well-defined and non-negative almost everywhere and $\sigma^{\prime}(x)>0$ on a set with non-zero Lebesgue measure. We then have for some absolute constant $c>0$,
$$
\mathbb{P}_{\mathrm{ho}}\left(\widehat{\operatorname{sgn}}=\operatorname{sign}\left(\left\langle w_{\star}, \widehat{w}\right\rangle_{\Sigma}\right)\right) \geq 1-2 \exp \left(-c n_{\mathrm{ho}}\left(\cos \left(\theta_{*}\right) \cdot \mathbb{E} \sigma^{\prime}(Z)\right)^{2}\right)
$$
where $Z\sim N(0,1)$ and $\mathbb{P}_{\mathrm{ho}}$ is with respect to the randomness in $\left(y_{i}^{\mathrm{ho}}, x_{i}^{\mathrm{ho}}\right)_{i=1}^{n_{\mathrm{ho}}}$. 
\end{proposition}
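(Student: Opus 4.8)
The plan is to condition on the training data—hence on $\widehat{w}$ and $\theta_*$—and treat the reserved holdout sample $(x_i^{\mathrm{ho}}, y_i^{\mathrm{ho}})_{i=1}^{n_{\mathrm{ho}}}$, which is independent of $\widehat{w}$, as fresh i.i.d.\ draws from the model. Write $S_i := \widehat{w}^\top x_i^{\mathrm{ho}}\, y_i^{\mathrm{ho}}$, so that $\widehat{\operatorname{sgn}} = \operatorname{sign}\big(\sum_{i=1}^{n_{\mathrm{ho}}} S_i\big)$ and, conditionally on $\widehat{w}$, the $S_i$ are i.i.d. The argument then has two ingredients: (i) identify the conditional mean $\mu := \mathbb{E}[S_i \mid \widehat{w}]$ and show $\operatorname{sign}(\mu) = \operatorname{sign}(\langle w_\star,\widehat{w}\rangle_\Sigma)$ with a quantitative lower bound on $|\mu|$; (ii) show $\sum_i S_i$ concentrates around $n_{\mathrm{ho}}\mu$ on the scale $n_{\mathrm{ho}}|\mu|$, so that flipping the sign requires a per-coordinate deviation of order $|\mu|$.

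For (i), condition further on $x_i^{\mathrm{ho}}$ and use $y_i^{\mathrm{ho}}\mid x_i^{\mathrm{ho}}\sim\operatorname{Bern}(\sigma(w_\star^\top x_i^{\mathrm{ho}}))$ to obtain $\mu = \mathbb{E}\big[\widehat{w}^\top x_i^{\mathrm{ho}}\,\sigma(w_\star^\top x_i^{\mathrm{ho}})\big]$. The pair $(U,V):=(\widehat{w}^\top x_i^{\mathrm{ho}}/\|\widehat{w}\|_\Sigma,\ w_\star^\top x_i^{\mathrm{ho}})$ is centered jointly Gaussian with unit variances and correlation $\rho = \langle w_\star,\widehat{w}\rangle_\Sigma/\|\widehat{w}\|_\Sigma = \cos\theta_*$; writing $U = \rho V + \sqrt{1-\rho^2}\,\widetilde W$ with $\widetilde W\perp V$ standard normal gives $\mu = \|\widehat{w}\|_\Sigma\,\rho\,\mathbb{E}[V\sigma(V)]$. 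Gaussian integration by parts (Stein's identity, in Lebesgue--Stieltjes form since $\sigma$ is bounded and monotone) yields $\mathbb{E}[V\sigma(V)]\ge \mathbb{E}[\sigma'(Z)] > 0$, where strict positivity uses the hypothesis that $\sigma'\ge 0$ a.e.\ and $\sigma'>0$ on a set of positive Lebesgue measure. Since $\|\widehat{w}\|_\Sigma>0$ and $\|w_\star\|_\Sigma=1$, this gives $\operatorname{sign}(\mu) = \operatorname{sign}(\cos\theta_*) = \operatorname{sign}(\langle w_\star,\widehat{w}\rangle_\Sigma)$ and $|\mu| \ge \|\widehat{w}\|_\Sigma\,|\cos\theta_*|\,\mathbb{E}[\sigma'(Z)]$. (If $\cos\theta_*=0$ the claimed bound is vacuous, so assume $\cos\theta_*\neq 0$.)

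For (ii), note $|S_i| \le |\widehat{w}^\top x_i^{\mathrm{ho}}|$ with $\widehat{w}^\top x_i^{\mathrm{ho}}\sim N(0,\|\widehat{w}\|_\Sigma^2)$, so conditionally on $\widehat{w}$ each centered $S_i$ is sub-Gaussian with proxy of order $\|\widehat{w}\|_\Sigma^2$; a Hoeffding-type bound for sums of independent sub-Gaussians gives $\mathbb{P}_{\mathrm{ho}}\big(|\sum_i (S_i-\mu)| \ge t\big) \le 2\exp\!\big(-c\,t^2/(n_{\mathrm{ho}}\|\widehat{w}\|_\Sigma^2)\big)$. The error event $\{\widehat{\operatorname{sgn}}\neq\operatorname{sign}(\langle w_\star,\widehat{w}\rangle_\Sigma)\}$ is contained in $\{|\sum_i(S_i-\mu)|\ge n_{\mathrm{ho}}|\mu|\}$, so taking $t = n_{\mathrm{ho}}|\mu|$ and inserting the lower bound $|\mu|\ge\|\widehat{w}\|_\Sigma|\cos\theta_*|\mathbb{E}[\sigma'(Z)]$ makes the factor $\|\widehat{w}\|_\Sigma^2$ cancel, leaving $2\exp\!\big(-c\,n_{\mathrm{ho}}\,(\cos\theta_*\cdot\mathbb{E}[\sigma'(Z)])^2\big)$, which is the claimed bound. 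Since this holds uniformly in $\widehat{w}$, it is valid both conditionally on the training data and after integrating it out.

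The only delicate point—and the main obstacle—is the integration-by-parts step, since $\sigma$ is assumed merely to possess a non-negative derivative a.e.\ rather than to be absolutely continuous. I would handle this by mollifying $\sigma$ (convolving with a Gaussian kernel shrinking to a point), applying Stein's identity to the smooth approximants where it is elementary, and passing to the limit via boundedness of $\sigma$ and dominated convergence; crucially, the \emph{inequality} $\mathbb{E}[V\sigma(V)]\ge\mathbb{E}[\sigma'(Z)]$ (rather than equality) is all that survives if $\sigma$ carries a singular part, and it is all the proof needs—both for sign-correctness and for the rate. Everything else is routine: Gaussian conditioning, a standard sub-Gaussian tail bound, and the observation that the scale $\|\widehat{w}\|_\Sigma^2$ enters both the deviation threshold and the sub-Gaussian proxy and therefore drops out of the final exponent.
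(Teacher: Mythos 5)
Your argument is correct and follows essentially the same route as the paper: condition on $\widehat{w}$ so the holdout terms are i.i.d., identify the conditional mean via Gaussian conditioning plus Stein's identity as $\langle w_\star,\widehat{w}\rangle_\Sigma\,\mathbb{E}\sigma'(Z)$, and conclude with a sub-Gaussian Hoeffding bound in which the $\|\widehat{w}\|_\Sigma^2$ scale cancels. The only cosmetic difference is the direction of the Gaussian decomposition before Stein (the paper writes $w_\star^\top x \stackrel{L}{=}\cos\theta_* U + \sin\theta_* Z$ and differentiates in $U$, you write $\widehat{w}^\top x/\|\widehat{w}\|_\Sigma = \cos\theta_* V + \sin\theta_* \widetilde W$ and apply Stein to $V$); your added mollification remark for a possibly non--absolutely-continuous $\sigma$ is a reasonable extra precaution that the paper elides.
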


 Plugging \eqref{bellec} and \eqref{signest} into \eqref{angledef}, we have the following estimator $\hat{\theta}$ for $\theta_*$ in \eqref{angledef}
 \begin{equation}\label{angest}
    \hat{\theta}:=\arccos \qty(\norm{\what}_\Sigma^{-1} \widehat{\operatorname{sgn}} \cdot \sqrt{\hat{a}_*^2}).
\end{equation}
\Cref{thmconsis} below shows that $\hat{\theta}$ is consistent.
\begin{corollary}\label{thmconsis}
Under the assumption of Proposition \ref{consistThm} and \Cref{bellecthm}, as $n,d\to \infty$, we have that $|\hat{\theta}-\theta_*|\to 0$ in probability where $n_{\ho}=\alpha\cdot n$ for a fixed constant $\alpha>0$. 
\end{corollary}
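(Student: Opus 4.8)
The plan is to reduce the corollary to two facts established earlier: the concentration $\abs{\hat a_*^2 - \langle w_\star,\what\rangle_\Sigma^2}\to 0$ in probability from \Cref{bellecthm}, and the sign‑recovery guarantee $\mathbb{P}\bigl(\widehat{\operatorname{sgn}}\neq\operatorname{sign}(\langle w_\star,\what\rangle_\Sigma)\bigr)\to 0$, which follows from \Cref{consistThm} once $n_{\ho}=\alpha n\to\infty$. Since $\|w_\star\|_\Sigma=1$, by \eqref{angledef} and \eqref{angest} we have $\cos\theta_* = \langle w_\star,\what\rangle_\Sigma/\|\what\|_\Sigma$ and $\cos\hat\theta = \widehat{\operatorname{sgn}}\cdot\sqrt{\hat a_*^2}\,/\,\|\what\|_\Sigma$, where $\|\what\|_\Sigma=(\what^\top\Sigma\what)^{1/2}$ is observable and, under the assumptions inherited from \Cref{bellecthm}, bounded away from $0$ and $\infty$ with high probability. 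Because $\arccos$ is uniformly continuous on $[-1,1]$, it suffices to show $\abs{\cos\hat\theta - \cos\theta_*}\to 0$ in probability (clamping $\cos\hat\theta$ to $[-1,1]$ on the vanishing‑probability event where $\hat a_*^2>\|\what\|_\Sigma^2$, which is controlled by Cauchy--Schwarz together with \Cref{bellecthm}); after dividing through by $\|\what\|_\Sigma$ this reduces to proving $\widehat{\operatorname{sgn}}\cdot\sqrt{\hat a_*^2} - \langle w_\star,\what\rangle_\Sigma\to 0$ in probability.

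To establish this last display, I would condition on the good event $E_n:=\{\widehat{\operatorname{sgn}}=\operatorname{sign}(\langle w_\star,\what\rangle_\Sigma)\}$. On $E_n$, writing $\sqrt{\,\cdot\,}$ for the nonnegative root,
\[
\abs{\widehat{\operatorname{sgn}}\sqrt{\hat a_*^2}-\langle w_\star,\what\rangle_\Sigma} = \abs{\sqrt{\hat a_*^2}-\sqrt{\langle w_\star,\what\rangle_\Sigma^2}} \le \sqrt{\abs{\hat a_*^2-\langle w_\star,\what\rangle_\Sigma^2}},
\]
using $\abs{\sqrt a-\sqrt b}\le\sqrt{\abs{a-b}}$ for $a,b\ge 0$; the right‑hand side tends to $0$ in probability by \Cref{bellecthm}. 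Meanwhile $\mathbb{P}(E_n^c)\le 2\exp\bigl(-c\,n_{\ho}(\cos\theta_*\cdot\E\sigma'(Z))^2\bigr)$ by \Cref{consistThm}, which vanishes since $n_{\ho}=\alpha n\to\infty$, $\E\sigma'(Z)>0$ (by the assumption that $\sigma'>0$ on a set of positive measure), and $\cos\theta_*$ is bounded away from $0$. A union bound $\mathbb{P}(\abs{\,\cdot\,}>\varepsilon)\le\mathbb{P}(E_n^c)+\mathbb{P}\bigl(E_n\cap\{\sqrt{\abs{\hat a_*^2-\langle w_\star,\what\rangle_\Sigma^2}}>\varepsilon\}\bigr)$ then closes the argument, and passing back through $\|\what\|_\Sigma$ and $\arccos$ yields $\abs{\hat\theta-\theta_*}\to 0$.

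The main obstacle I anticipate is the boundary case in which $\cos\theta_*$ is \emph{not} bounded away from $0$, so the exponential bound of \Cref{consistThm} need not force $\mathbb{P}(E_n^c)\to 0$ and the sign may be misidentified. I expect this to be resolved by noting that in this regime $\langle w_\star,\what\rangle_\Sigma\to 0$, hence $\hat a_*^2\to 0$ by \Cref{bellecthm}, and therefore $\widehat{\operatorname{sgn}}\sqrt{\hat a_*^2}\to 0=\lim\langle w_\star,\what\rangle_\Sigma$ \emph{regardless} of the sign chosen, so $\cos\hat\theta-\cos\theta_*\to 0$ persists. To make this uniform I would split into $\{\abs{\cos\theta_*}\ge\delta\}$ and $\{\abs{\cos\theta_*}<\delta\}$, run the $E_n$‑argument on the first and the sign‑free bound on the second, and let $\delta\downarrow 0$ after $n\to\infty$. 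The remaining points are routine bookkeeping: recording that $\|\what\|_\Sigma=\Theta(1)$ with high probability (needed when dividing by it) and that $\sqrt{\hat a_*^2}$ is well defined after truncation at $0$, both immediate from \Cref{bellecthm} and the paper's standing assumptions.
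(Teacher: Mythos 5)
The paper states Corollary \ref{thmconsis} without a written proof, treating it as an immediate consequence of Proposition \ref{consistThm} and Theorem \ref{bellecthm}. Your reconstruction supplies the missing details and is correct: the decomposition $\cos\hat\theta-\cos\theta_*=\bigl(\widehat{\operatorname{sgn}}\sqrt{\hat a_*^2}-\langle w_\star,\what\rangle_\Sigma\bigr)/\norm{\what}_\Sigma$, the elementary bound $\abs{\sqrt a-\sqrt b}\le\sqrt{\abs{a-b}}$ on the good sign event, the union bound against $\mathbb{P}(E_n^c)$, and the final appeal to uniform continuity of $\arccos$ on $[-1,1]$ (with clamping) are each correct and together close the argument. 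Because the paper gives no proof there is nothing to compare route-for-route, but yours is the natural argument and I see no gap in it.

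Two remarks worth recording. First, you correctly identify a genuine subtlety that the paper's prose glosses over: the sign-recovery bound in Proposition \ref{consistThm} only forces $\mathbb{P}(E_n^c)\to 0$ when $\cos\theta_*$ stays bounded away from $0$, and your $\delta$-split, which shows $\widehat{\operatorname{sgn}}\sqrt{\hat a_*^2}-\langle w_\star,\what\rangle_\Sigma\to 0$ sign-free once $\hat a_*^2=o_p(1)$, is exactly the right repair; it would strengthen the exposition to state this case explicitly rather than leave it implicit. Second, the only hypothesis you invoke that is not literally in the paper's statements is $\norm{\what}_\Sigma=\Theta(1)$ with high probability; this is standard in the proportional regime and is consistent with the assumptions underlying Theorem \ref{bellecthm}, but since you divide by $\norm{\what}_\Sigma$ it is worth flagging as an assumption inherited from the M-estimation setup rather than something the corollary's stated hypotheses grant for free.
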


\begin{figure}
    \centering
    \includegraphics[width=0.49\linewidth]{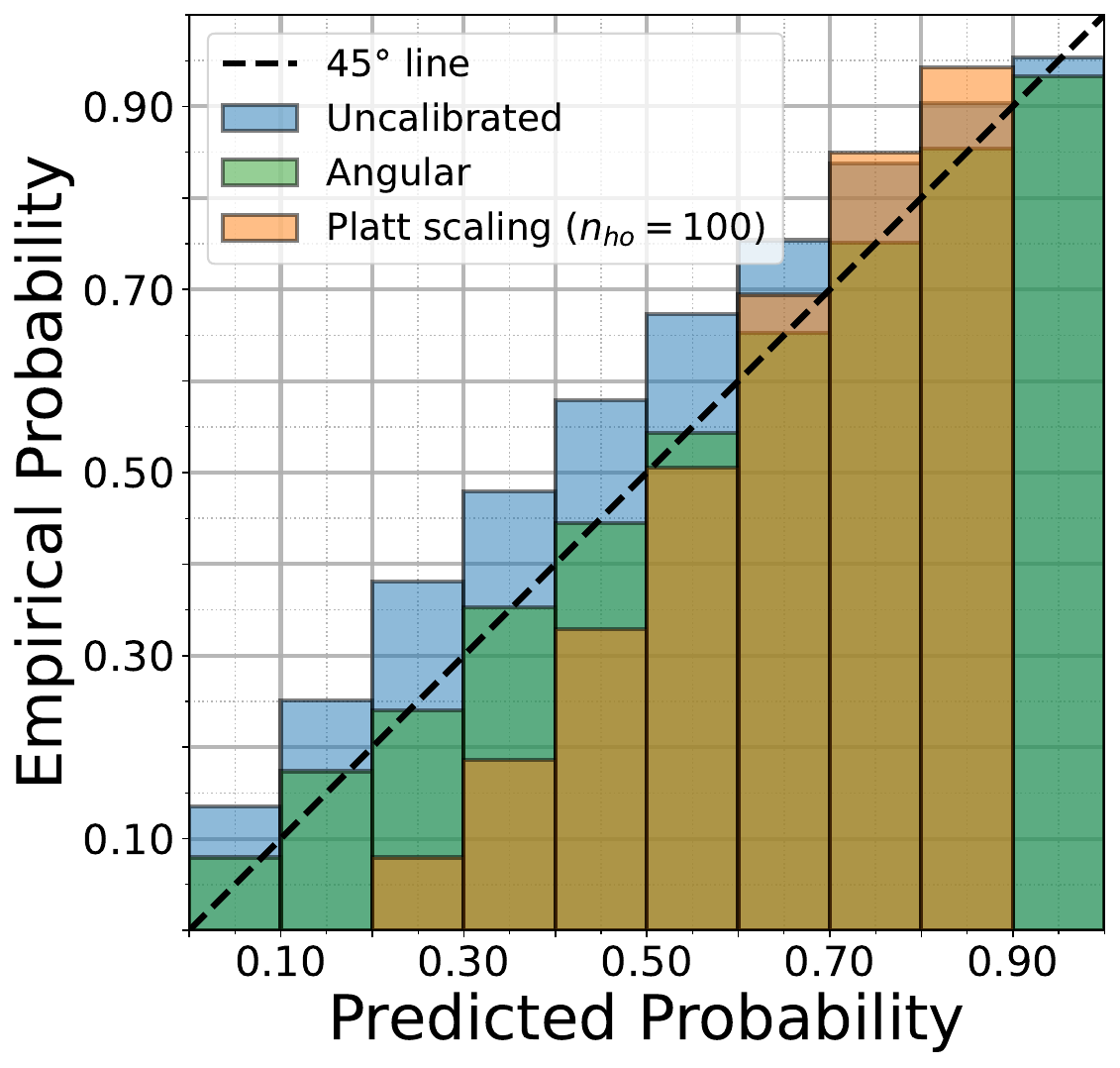}
    \includegraphics[width=0.49\linewidth]{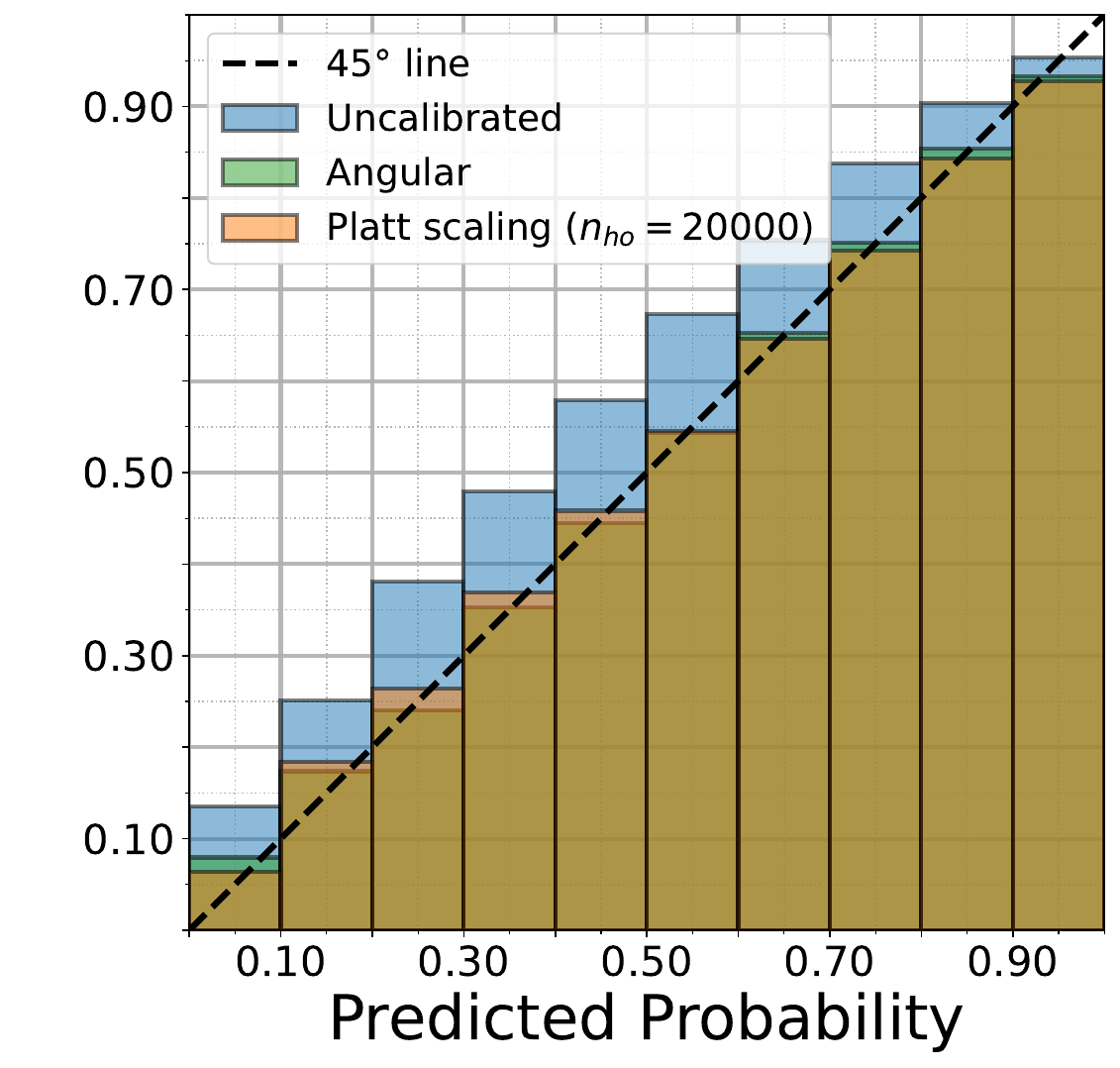}
    \caption{Reliability plots for angular calibration and Platt scaling of a logistic ridge predictor. Left panel uses a small holdout set for Platt scaling with $n_{\ho}=100$; Right panel uses a large holdout set with $n_{\ho}=2000$. The plot is generated with Gaussian data with covariance $\Sigma=\frac{1}{d} \bar{\Sigma}$ where $\bar{\Sigma}_{kl}=0.5^{|k-l|}, \forall k,l \in \{1,...,d\}$, sigmoid link function in a data deficient setting where $n=1000, p=2000$. See \Cref{simulation} for more details.}
    \label{fig:combinedcalib}
\end{figure}

\section{Numerical experiments}\label{simulation}

\subsection{Simulations}\label{sim}
This section presents a simple simulation to demonstrate results in \Cref{mest}. We generate i.i.d.~samples $x_{i} \stackrel{\mathrm{ iid }}{\sim} N(0, \Sigma), i=1,...,n$ where $\Sigma=\frac{1}{d} \bar{\Sigma}$ and $\bar{\Sigma}_{kl}=0.5^{|k-l|}, \forall k,l \in \{1,...,d\}$; we also generate labels from \eqref{lab} with $\sigma(u)=\mathrm{sigmoid}(3u+1)=1/(1+\exp(-(3u+1)))$ and $w_\star \sim N(0, I_d)$ (normalized to $\norm{w_\star}_\Sigma=1$). We consider the case of ridge logistic regression with $
\ell_{y_i}(w^\top x)= -y_i \log(\hat{p}_w(x_i))-(1-y_i) \log(1-\hat{p}_w(x_i))
$ with $\hat{p}_w\left(x_i\right)=\frac{1}{1+\exp \left(-x_i w\right)}$ and $g(w)=\frac{\lambda}{2d} \norm{w}_2^2$ with $\lambda =0.5$. We assume that we are in a data deficient setting where $n=1000, p=2000$. 

The realizability plots in \Cref{fig:combinedcalib} are generated from a test set of size \(n_{\mathrm{test}}=20000\). To produce these plots, we bin the predicted probabilities for label 1 (on the x-axis) and then compute the average of the observed label within each bin (on the y-axis). Perfect calibration would align the binned points with the 45° line. In the left and right panels, Platt scaling is derived using holdout sets of \(n_{\mathrm{ho}}=100\) and \(n_{\mathrm{ho}}=20000\), respectively, whereas both the uncalibrated predictor and the angular predictor remain unchanged across the two panels.

From the reliability plots, we see that the uncalibrated predictor (blue) is poorly calibrated, while, as expected, the angular predictor (green) shows good calibration. Here, we have used the angle estimator \eqref{bellec} and the sign estimator \eqref{signest} to estimate the value of $\left\langle w_{\star}, \widehat{w}\right\rangle_{\Sigma}$. The estimated value for $\left\langle w_{\star}, \widehat{w}\right\rangle_{\Sigma}$ is 0.4356 while the true value is 0.4526. We also ran 5000 Monte Carlo trials where we found the probability of incorrect sign estimation to be 0.89\% with a holdout set of size $n_{\mathrm{ho}}=100$.

In contrast, the left panel of \Cref{fig:combinedcalib} shows that Platt scaling (orange) with a holdout set size of \( n_{\mathrm{ho}} = 100 \) fails to properly calibrate. However, when the holdout set size is increased to  \( n_{\mathrm{ho}} = 20000 \), Platt scaling also calibrates correctly. When \( n_{\mathrm{ho}} = 20000 \), the predictor calibrated from Platt scaling is found to be (using scikit-learn package's $\mathsf{CalibratedClassifierCV}$ routine \cite{scikit-learn})
\begin{equation*}
\hat{f}_{\mathrm{platt}}^{n_{\mathrm{ho}}}(\widehat{w}^\top x_{\mathrm{new}})=\sigma (\hat{A}^{n_{\mathrm{ho}}}\cdot \widehat{w}^\top x_{\mathrm{new}}+\hat{B}^{n_{\mathrm{ho}}}), \text{ with } \hat{A}^{n_{\mathrm{ho}}}=0.3396, \hat{B}^{n_{\mathrm{ho}}}=-0.1521.
\end{equation*}
We now check if $\hat{A}^{n_{\mathrm{ho}}}$ and $\hat{B}^{n_{\mathrm{ho}}}$ are indeed close to $A_*, B_*$ as claimed in \Cref{plattjust}. Note that even though the link function is not strictly a probit function as required by \Cref{plattjust}, by approximating $\mathrm{sigmoid}(u)\approx \Phi(\sqrt{\pi/8}\cdot u)$, we have $\sigma(u)\approx \Phi(\sqrt{\pi/8}(3u+1))$. We then obtain $A_*=0.2991, B_*=-0.1597$ from \eqref{ABstardef} setting $a=3\sqrt{\pi/8}, b=\sqrt{\pi/8}$, which are indeed quite close to $\hat{A}^{n_{\mathrm{ho}}}=0.3396, \hat{B}^{n_{\mathrm{ho}}}=-0.1521$. 

\subsection{Semi-real experiments}\label{sec:semi-real}

We assess angular calibration on semi-real tasks that keep real-data covariates but simulate labels from the known generative model. We maintain settings in \Cref{sim} but replace data covariates with: (i) final-layer logits of pretrained deep networks; and (ii) classic UCI benchmarks.

We found that \eqref{bellec} plug-in estimator for $\left\langle w^{\star}, \hat{w}\right\rangle$ is unstable on these real datasets. This is a known issue for estimators like (11) that are based on Wigner-type random-matrix-theoretic assumptions. Modifying these estimator are an active research area \cite{li2023spectrum,luo2024roti}. To isolate calibration effects, we simulate $w^{\star}$ and labels, using the true angle.

Each dataset is split into training set \(n\), a large unlabeled pool \(n_{\mathrm{cov}}\!\gg\! n_{\mathrm{train}}\) for covariance estimation, and \(n_{\mathrm{test}}\). We report Expected Calibration Error (ECE; lower is better) \citep{guo2017calibration}. Post-hoc baselines use a labeled hold-out of size \(n_{\mathrm{ho}}\in\{100,500\}\) (“Platt~100/500’’ and “Iso~100/500’’).

\textbf{Pretrained representations.} We fit a linear head on frozen embeddings and calibrate the resulting logits: ResNet-34 (ImageNet-1K pretrain) on CIFAR-10 \citep{he2016deep,deng2009imagenet,krizhevsky2009learning}, MiniLM sentence embeddings on 20~Newsgroups \citep{wang2020minilm,reimers2019sentencebert,lang1995news}, and ChemBERTa on Tox21 from MoleculeNet \citep{chithrananda2020chemberta,wu2018moleculenet}. We have $n\times d = (300 \times 512) / (800 \times 384) / (800 \times 768)$, \(n_{\mathrm{cov}}=30{,}000/3{,}000/3{,}000\) and \(n_{\mathrm{test}}=10{,}000/1{,}000/500\) for CIFAR-10 / 20NG / Tox21. The results are reported in \Cref{tab:semi-real-dl}; the reliability plots are deferred to \Cref{ucideep}. 

\begin{table}[t]
\centering
\caption{\textbf{ECE on pretrained feature extractors.} “Uncal.” = uncalibrated; “Angular” = our method; “Platt/Iso” use \(n_{\mathrm{ho}}\in\{100,500\}\) labeled hold-out points. } 
\label{tab:semi-real-dl}
\begin{tabular}{lcccccc}
\toprule
\textbf{Model–Dataset} & \textbf{Uncal.} & \textbf{Angular} & \textbf{Platt 100} & \textbf{Iso 100} & \textbf{Platt 500} & \textbf{Iso 500} \\
\midrule
ResNet-34–CIFAR-10      & 0.1236 & 0.0199 & 0.0561 & 0.0484 & 0.0259 & 0.0298 \\
MiniLM–20~Newsgroups    & 0.1392 & 0.0249 & 0.0931 & 0.1107 & 0.0679 & 0.0813 \\
ChemBERTa–Tox21         & 0.1389 & 0.0132 & 0.0236 & 0.0497 & 0.0175 & 0.0293 \\
\bottomrule
\end{tabular}
\end{table}

\textbf{UCI benchmarks.} On Communities \& Crime, Splice-junction Gene Sequences, and Madelon \citep{uci_communities_crime,uci_splice_junction,uci_madelon}, we train linear predictors on raw covariates with $n\times d=(200\times100)/(100\times180)/(300\times500)$, $n_{\mathrm{cov}}=700/2000/900$, and $n_{\mathrm{test}}=593/900/900$ for Communities \& Crime / Splice-junction / Madelon. The results are reported in \Cref{tab:semi-real-uci}; the reliability plots are deferred to \Cref{ucideep}. We further evaluate on 20 additional UCI/OpenML datasets \citep{dua2019uci,vanschoren2014openml} spanning training dimensions $n\times d$ from $128\times 64$ to $9{,}881\times 19{,}762$; the full results, reliability diagrams and dataset descriptions are in \Cref{sec:uci-extended}.

\begin{table}[t]
\centering
\caption{\textbf{ECE on UCI datasets.} Conventions as in Table~\ref{tab:semi-real-dl}.}
\label{tab:semi-real-uci}
\begin{tabular}{lcccccc}
\toprule
\textbf{Dataset} & \textbf{Uncal.} & \textbf{Angular} & \textbf{Platt 100} & \textbf{Iso 100} & \textbf{Platt 500} & \textbf{Iso 500} \\
\midrule
Communities \& Crime     & 0.1700 & 0.0296 & 0.0620 & 0.0661 & 0.0262 & 0.0436 \\
Splice-junction Gene Seq & 0.1262 & 0.0208 & 0.0568 & 0.1041 & 0.0556 & 0.0879 \\
Madelon                  & 0.1473 & 0.0267 & 0.0721 & 0.0719 & 0.0299 & 0.0624 \\
\bottomrule
\end{tabular}
\end{table}

 \section{Extensions and future directions }\label{sec:conc}
We derived our theoretical results assuming that the covariates are Gaussian---although at first pass this might appear stylistic, recent universality results \cite{hu2022universality,liang2022precise,han2023universality,dudeja2024spectral,lahiry2024universality,montanari2022universality}
 demonstrate that these results should continue to hold as long as the covariates have sufficiently light tails. We demonstrate this with further experiments. In \Cref{fig:Rad} and \Cref{fig:Unif}, we reproduce \Cref{fig:asymp} and \ref{fig:combinedcalib} with non-Gaussian design matrices (iid~Rademacher and uniform entries respectively) where we observe that our results continue to be accurate. Establishing such universality formally should be an interesting avenue for future work---we include an informal discussion here. Consider a general setting where $x_i \stackrel{\textrm{d}}{=} \Sigma^{1/2}z_i $, where $z_i$ has iid entries with zero mean, unit variance and finite moments, and $\Sigma=p^{-1} \bar{\Sigma}$ where $\bar{\Sigma}$ has bounded condition number. 
Denote $ \widetilde{w}=\bar{\Sigma}^{1 / 2} \widehat{w}, \widetilde{w}_{\star}=\bar{\Sigma}^{1 / 2} w^{\star}$ and $x_{\text{new}},z_{\text{new}}$ to be observations at test time with the same distribution as $x_i,z_i$ respectively. If we could apply the multivariate CLT \cite{bentkus2005lyapunov}, we would obtain
\begin{equation}\label{fcm}
\left(p^{-1 / 2} \sum_{i=1}^p \widetilde{w}_{\star, i} z_{\text{new}, i}, p^{-1 / 2} \sum_{i=1}^p \widetilde{w}_i z_{\text{new},i}\right) \Rightarrow\left(Z_1, Z_2\right),
\end{equation}
where $\left(Z_1, Z_2\right) \sim N(0, L)$ for some positive definite covariance matrix $L\in \mathbb{R}^{2\times 2}$. 
To apply the multivariate CLT, we require to check the following moment condition (c.f. \cite{bentkus2005lyapunov}) 
\begin{equation*}
\begin{array}{r}
p^{-3 / 2} \sum_{i=1}^p \mathbb{E}\left[z_{\text {new }, i}^{\frac{3}{2}}\right]\left[\left(\widetilde{w}_{\star, i}, \widetilde{w}_i\right)\left(\begin{array}{cc}
\|\widehat{w}\|_{\Sigma}^2 & \left\langle\widehat{w}, w_{\star}\right\rangle_{\Sigma} \\
\left\langle\widehat{w}, w_{\star}\right\rangle_{\Sigma} & \left\|w_{\star}\right\|_{\Sigma}^2
\end{array}\right)^{-1}\begin{pmatrix}
\widetilde{w}_{\star,i} \\
\widetilde{w}_i
\end{pmatrix} 
\right]^{\frac{3}{2}} \\
\leq p^{-1 / 2}\mathbb{E}\left[z_{\text {new }, i}^{\frac{3}{2}}\right] \sigma_{\min }^{-1}(L)\left(\frac{1}{p} \sum_{i=1}^p |\widetilde{w}_{\star, i}|^3+\frac{1}{p} \sum_{i=1}^p |\widetilde{w}_i|^3\right) =o\left(\frac{1}{\sqrt{p}}\right).
\end{array}
\end{equation*}
We claim that the above holds  almost surely for sufficiently large $p$, if we have constants $W_1, W_2>0$ for which the following holds
\[
\vspace{-0mm}
   \left(\begin{array}{cc}
\|\widehat{w}\|_{\Sigma}^2 & \left\langle\widehat{w}, w_{\star}\right\rangle_{\Sigma} \\
\left\langle\widehat{w}, w_{\star}\right\rangle_{\Sigma} & \left\|w_{\star}\right\|_{\Sigma}^2
\end{array}\right)\to L,\quad \left(\frac{1}{p}\sum_{i=1}^p |\widetilde{w}_{\star,i}|^3,\frac{1}{p}\sum_{i=1}^p |\widetilde{w}_i|^3\right) \rightarrow (W_1,W_2).
\vspace{-0mm}
\] 
Recent universality results for either approximate message passing algorithms \cite{ChenLam2021} or convex gaussian minmax theorems (CGMT) \cite{han2023universality} allow one to prove this beyond Gaussian designs. Numerous works have already applied such arguments in the context of other high-dimensional problems \cite{hu2022universality,montanari2022universality,liang2022precise,lahiry2024universality}. Using \eqref{fcm}, the conditional distribution \eqref{conditionalid} in the proof of \Cref{mainThm} can be extended to non-Gaussian, Wigner-type features preconditioned by some known $\Sigma^{1 / 2}$, thus leading to a proof of Theorem 2 beyond Gaussian designs. In the interest of space, we defer formalizing this to future work.

Finally, we consider binary classification in this work; it would be interesting to extend our results to multi-index models, which includes multi-class classification, additive and interaction models, and two-layer neural networks \cite{troiani2024fundamental,yang2017estimating,bruna2025survey}; see details in \Cref{MIdexsec}. Multi-index model can be defined as follows: for $K \geq 2$ and unobserved indices $W_{\star}=\left[w_{\star 1}, \ldots, w_{\star K}\right] \in \mathbb{R}^{d \times K}$, the true logits and model outputs are
$
G:=W_{\star}^{\top} x_{\text {new }} \in \mathbb{R}^K,\; \pi\left(x_{\text {new }}\right)=g(G)
$
where $g$ is a generalized link (vector- or scalar-valued). We show in \Cref{MIdexsec} that, given an estimator $\widehat{W}=\left[\widehat{w}_1, \ldots, \widehat{w}_K\right]$ of $W_\star$, angular predictor in Definition \ref{dsdfsdfff} may be extended for the multi-index model as follows, $$\widehat{f}_{\text {ang }}\left(\widehat{W}^{\top} x_{\text {new }}\right):=\mathbb{E}_Z\left[g\left(M_{\star} S+L_{\star} Z\right)\right]$$ where the matrix quantity $M_{\star}$ and $L_{\star}$ depends on cross-index angles $\left\langle w_{\star k}, \widehat{w}_{\ell}\right\rangle_{\Sigma}, \ell, k \in[K]$. Though no estimators are given for these cross-index angles in literature as far as we know, recent theory for multi-index models \cite{troiani2024fundamental} suggests that analogues of the single-index angle estimators \cite{bellec2022observable} are feasible. We leave this to future works. 


\begin{ack}
P.S. was funded partially by NSF DMS Award No.~2113426 and NSF CAREER Award No.~2440824.  We thank Bin Yu for suggesting that we include more UCI benchmark experiments, which we report in \Cref{sec:uci-extended}.
\end{ack}

\bibliographystyle{plain}
\bibliography{paper-ref}

\newpage
\appendix

\section{Proof of \Cref{mainThm}}\label{pfmainThm}
Before proving \Cref{mainThm}, we first show that $\hat{f}_{\mathrm{ang}}\left(\cdot; \theta_* \right) $ is exactly calibrated. 
\begin{theorem}\label{mainThmPop}
    The predictor $\hat{f}_{\mathrm{ang}}$ defined in \eqref{definterp} is well-calibrated at all $p \in[0,1]$ when . That is, for any $p\in [0,1]$ and any $d,n \in \mathbb{N}_{+}$
    $$\Delta_{p}^{\mathrm{cal }}\left(\hat{f}_{\mathrm{ang}}\left(\cdot; \theta_* \right) \right)=p-\mathbb{E}_{x_{\mathrm{new }}}\left[\sigma\left(w_{\star}^{\top} x_{\mathrm{new }}\right) \mid \hat{f}_{\mathrm{ang}}\left(\what^\top x_{\mathrm{new }}; \theta_*\right)=p\right]=0.$$
\end{theorem}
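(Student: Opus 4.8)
The plan is to identify $\hat f_{\mathrm{ang}}(\what^\top x_{\mathrm{new}};\theta_*)$ with the \emph{exact} conditional expectation $\E[\sigma(w_\star^\top x_{\mathrm{new}})\mid \what^\top x_{\mathrm{new}}]$; once this is done, calibration is immediate from a single application of the tower property. Throughout, the training data (hence $\what$ and $\theta_*$) are held fixed, so all probabilistic statements refer only to $x_{\mathrm{new}}\sim N(0,\Sigma)$. We may assume $\what\neq\zerobf$, as otherwise $\theta_*$ is undefined.

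First I would introduce the scalars $U:=w_\star^\top x_{\mathrm{new}}$ and $V:=\what^\top x_{\mathrm{new}}/\|\what\|_\Sigma$. Since $x_{\mathrm{new}}$ is a centered Gaussian vector, $(U,V)$ is a centered bivariate Gaussian, and computing second moments gives $\Vbb(U)=\|w_\star\|_\Sigma^2=1$, $\Vbb(V)=1$, and $\Cov(U,V)=\langle w_\star,\what\rangle_\Sigma/\|\what\|_\Sigma$, which equals $\cos\theta_*$ by the definition \eqref{angledef} of $\theta_*$ together with $\|w_\star\|_\Sigma=1$. By the Gaussian conditioning formula, the law of $U$ given $V=v$ is therefore $N(\cos\theta_*\cdot v,\ \sin^2\theta_*)$, using $\sin^2\theta_*=1-\cos^2\theta_*$ and $\theta_*\in[0,\pi]$ (so $\sin\theta_*\ge 0$). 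Hence, with $Z\sim N(0,1)$,
\[
\E\big[\sigma(U)\,\big|\,V=v\big]=\E_Z\big[\sigma\big(\cos\theta_*\cdot v+\sin\theta_*\cdot Z\big)\big],
\]
and substituting $v=\what^\top x_{\mathrm{new}}/\|\what\|_\Sigma$, the right-hand side is exactly $\hat f_{\mathrm{ang}}(\what^\top x_{\mathrm{new}};\theta_*)$ from \eqref{definterp}. Since $V$ and $\what^\top x_{\mathrm{new}}$ generate the same $\sigma$-algebra, this establishes $\hat f_{\mathrm{ang}}(\what^\top x_{\mathrm{new}};\theta_*)=\E[\sigma(w_\star^\top x_{\mathrm{new}})\mid \what^\top x_{\mathrm{new}}]$ almost surely.

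To finish, I would condition once more on the coarser $\sigma$-algebra generated by the random variable $\hat f_{\mathrm{ang}}(\what^\top x_{\mathrm{new}};\theta_*)$ itself, which is a measurable function of $\what^\top x_{\mathrm{new}}$; the tower property yields
\[
\E\big[\sigma(w_\star^\top x_{\mathrm{new}})\,\big|\,\hat f_{\mathrm{ang}}\big]=\E\Big[\E\big[\sigma(w_\star^\top x_{\mathrm{new}})\,\big|\,\what^\top x_{\mathrm{new}}\big]\,\Big|\,\hat f_{\mathrm{ang}}\Big]=\E\big[\hat f_{\mathrm{ang}}\,\big|\,\hat f_{\mathrm{ang}}\big]=\hat f_{\mathrm{ang}},
\]
so on the event $\{\hat f_{\mathrm{ang}}=p\}$ the conditional expectation of $\sigma(w_\star^\top x_{\mathrm{new}})$ equals $p$, i.e.\ $\Delta_p^{\mathrm{cal}}(\hat f_{\mathrm{ang}}(\cdot;\theta_*))=0$.

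There is no deep obstacle here; the argument is essentially bookkeeping, and it is worth emphasizing that continuity of $\sigma$ is not needed — measurability and boundedness ($\sigma$ valued in $[0,1]$) make all integrals above finite. The only points warranting care are (i) the degenerate case $\sin\theta_*=0$ (perfect alignment), where the conditional law of $U$ given $V$ is a point mass and the identity $U=\cos\theta_*\,V$ a.s.\ makes the same conclusion trivial; and (ii) the rigorous reading of the last display when $\mathbb{P}(\hat f_{\mathrm{ang}}=p)=0$, for which I would phrase the conclusion via the almost-sure identity $\E[\sigma(w_\star^\top x_{\mathrm{new}})\mid \hat f_{\mathrm{ang}}]=\hat f_{\mathrm{ang}}$ and the Doob--Dynkin lemma, reading off $\Delta_p^{\mathrm{cal}}=0$ for every $p$ in the support of the law of $\hat f_{\mathrm{ang}}$. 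No high-dimensional asymptotics enter, so the statement is exact for every fixed $n,d$.
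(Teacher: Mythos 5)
Your proof is correct and follows essentially the same route as the paper's: both recognize $\hat f_{\mathrm{ang}}(\what^\top x_{\mathrm{new}};\theta_*)$ as the conditional expectation $\E[\sigma(w_\star^\top x_{\mathrm{new}})\mid\what^\top x_{\mathrm{new}}]$ via the bivariate-Gaussian conditioning identity \eqref{conditionalid}, and then apply the tower property to conclude calibration. You additionally spell out the degenerate $\sin\theta_*=0$ case and the measure-zero conditioning issue, which the paper glosses over, but the core argument is the same.
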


\begin{proof}[Proof of \Cref{mainThmPop}]
Let us define the following event
$$\mathcal{A}:=\qty{\hat{f}_{\mathrm{ang}}\left(w_\star^\top x_{\mathrm{new }}; \theta_*\right)=p}. $$
We have 
$$
\begin{aligned}
\mathbb{E}_{x_{\mathrm{new }}}&\left[\sigma\left(w_{\star}^{\top} x_{\mathrm{new }}\right) \mid \mathcal{A}\right] \\
&\stackrel{(i)}{=}\mathbb{E}_{x_{\mathrm{new }}}\left[\mathbb{E}_{x_{\mathrm{new }}}\left[\sigma\left(w_{\star}^{\top} x_{\mathrm{new }}\right) \mid x_{\mathrm{new }}^{\top} \widehat{w}\right] \mid \mathcal{A}\right] \\
&\stackrel{(ii)}{=}\mathbb{E}_{x_{\mathrm{new }}}\left[\mathbb{E}_{Z}\left[\sigma\left(\frac{1}{\|\widehat{w}\|_{\Sigma}} \cdot \cos \left(\theta_{*}\right) \cdot x_{\mathrm{new }}^{\top} \widehat{w}+\sin \left(\theta_{*}\right) \cdot Z\right) \right] \mid \mathcal{A}\right] \\
&=\mathbb{E}_{x_{\mathrm{new }}}\left[\hat{f}_{\mathrm{ang}}\left(w_\star^\top x_{\mathrm{new }} ; \theta_*\right) \mid \mathcal{A}\right]\\
& =p .
\end{aligned}
$$
where (i) follows from tower property of expectation and the fact that $\hat{f}_{\mathrm{ang}}\left(x_{\mathrm{new }}\right)$ depends on $x_{\mathrm{new}}$ only through $x_{\mathrm{new }}^{\top} \widehat{w}$, (ii) follows from conditional expectation of multivariate Gaussian distribution
\begin{equation}\label{conditionalid}
    \left[w_{\star}^{\top} x_{\mathrm{new }} \mid x_{\mathrm{new }}^{\top} \widehat{w}\right] \stackrel{L}{=} \frac{1}{\|\widehat{w}\|_{\Sigma}} \cdot \cos \left(\theta_{*}\right) \cdot x_{\mathrm{new }}^{\top} \widehat{w}+\sin \left(\theta_{*}\right) \cdot Z
\end{equation}
for some $Z\sim N(0,1)$. 
\end{proof}

\begin{proof}[Proof of \Cref{mainThm}]
    Using result from \Cref{mainThmPop}, it suffices to show that as $|\hat{\theta}-\theta_*|\to 0$ in probability, we have that
    \begin{equation}\label{wsaf}
        \abs{\Delta_{p}^{\mathrm{cal }}\left(\hat{f}_{\mathrm{ang}}\left(\cdot; \theta_* \right) \right)- \Delta_{p}^{\mathrm{cal }}\left(\hat{f}_{\mathrm{ang}}\left(\cdot; \hat{\theta} \right) \right)}\to 0.
    \end{equation}
    
    Let us introduce the following notation for the ease of presentation:
    $$X:=\sigma(w_\star^\top x_{\mathrm{new}}), \quad \hat{Y}=\hat{f}_{\mathrm{ang}}\left(\what^\top x_{\mathrm{new }}; \hat{\theta}\right),\quad Y_* = \hat{f}_{\mathrm{ang}}\left(\what^\top x_{\mathrm{new }}; \theta_*\right).$$
    Then, we can write LHS of \eqref{wsaf} as 
    \begin{equation*}
\left|\mathbb{E}[X \mid \hat{Y}=p]-\mathbb{E}\left[X \mid Y_{\star}=p\right]\right|=\left|\frac{1}{f_{\widehat{Y}}(p)} \int_0^1 x f_{X, \hat{Y}}(x, p) \mathrm{d} x-\frac{1}{f_{Y_{\star}}(p)} \int_0^1 x f_{X, Y_{\star}}(x, p) \mathrm{d} x\right|
\end{equation*}
where $f_{\hat{Y}}, f_{Y_*}, f_{X, Y_{\star}}, f_{X, \hat{Y}}$ are the distribution density functions of $\hat{Y},Y_*$ and joint density functions of $(X, Y_{\star})$ and $(X, \hat{Y})$. We now show that the RHS of the above converges to 0. Firstly, $|\frac{1}{f_{Y_{\star}}(p)}-\frac{1}{f_{\hat{Y}}(p)}|\to 0$ because $|\hat{Y}-Y_\star|\to 0$ in probability (and thus in distribution) by continuous mapping theorem. Secondly, 
$$\left|\int_0^1 x f_{X, \hat{Y}}(x, p) \mathrm{d} x-\int_0^1 x f_{X, Y_{\star}}(x, p) \mathrm{d} x\right|\to 0$$
by bounded convergence theorem and the fact that $(X, \hat{Y})$ converges to $(X,Y_*)$ jointly. We conclude the proof.  
\end{proof}

\section{Proof of \Cref{optimalThm}}\label{proofoptimalThm}
We first state a result from \cite{banerjee2005optimality} for general random variables. 

\begin{proposition}[Theorem 1, \cite{banerjee2005optimality}]\label{bregprop}
Let $\phi: \mathbb{R}^d \mapsto \mathbb{R}$ be a strictly convex differentiable function, and let $D_\phi$ be the corresponding Bregman loss function. Let $X$ be an arbitrary random variable taking values in $\mathbb{R}^d$ for which both $\E[X]$ and $\E[\phi(X)]$ are finite. Then, among all functions of $Z$, the conditional expectation is the unique minimizer (up to a.s. equivalence) of the expected Bregman loss, i.e.,

\begin{equation*}
\arg \min _{Y \in \sigma(Z)} \E\left[D_\phi(X, Y)\right]=\E[X \mid Z] .
\end{equation*}
\end{proposition}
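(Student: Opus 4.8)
The plan is to give the standard ``generalized Pythagorean'' argument for Bregman divergences, which is Theorem~1 of \cite{banerjee2005optimality}; we reproduce the proof here for completeness. Write $\mu := \E[X \mid Z]$, which is well-defined and integrable since $\E[X]$ is finite, and which is $\sigma(Z)$-measurable by construction. Fix an arbitrary $\sigma(Z)$-measurable candidate $Y$; we will show $\E[D_\phi(X,Y)] \ge \E[D_\phi(X,\mu)]$ with equality exactly when $Y = \mu$ almost surely (if $\E[D_\phi(X,Y)] = +\infty$ there is nothing to prove, so assume it is finite).

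First I would expand the pointwise difference using the definition of the Bregman loss. Since the $\phi(X)$ terms cancel,
\begin{equation*}
D_\phi(X,Y) - D_\phi(X,\mu) = \phi(\mu) - \phi(Y) - \langle X - Y, \nabla\phi(Y)\rangle + \langle X - \mu, \nabla\phi(\mu)\rangle .
\end{equation*}
Next I would take the conditional expectation given $Z$. Because $Y$, $\mu$, $\nabla\phi(Y)$ and $\nabla\phi(\mu)$ are all $\sigma(Z)$-measurable, the two inner-product terms simplify: $\E[\langle X - Y, \nabla\phi(Y)\rangle \mid Z] = \langle \mu - Y, \nabla\phi(Y)\rangle$ and $\E[\langle X - \mu, \nabla\phi(\mu)\rangle \mid Z] = \langle \E[X\mid Z] - \mu, \nabla\phi(\mu)\rangle = 0$. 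Hence
\begin{equation*}
\E\big[D_\phi(X,Y) - D_\phi(X,\mu) \,\big|\, Z\big] = \phi(\mu) - \phi(Y) - \langle \mu - Y, \nabla\phi(Y)\rangle = D_\phi(\mu, Y) .
\end{equation*}
Taking total expectations yields the Pythagorean identity $\E[D_\phi(X,Y)] = \E[D_\phi(X,\mu)] + \E[D_\phi(\mu,Y)]$.

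The conclusion then follows from two elementary facts about Bregman divergences of a strictly convex $\phi$: $D_\phi(\mu,Y) \ge 0$ always (first-order convexity inequality), and $D_\phi(\mu,Y) = 0$ if and only if $\mu = Y$ (strict convexity). Non-negativity gives $\E[D_\phi(X,Y)] \ge \E[D_\phi(X,\mu)]$, so $\mu = \E[X\mid Z]$ is a minimizer; and if $Y$ is any minimizer then $\E[D_\phi(\mu,Y)] = 0$, forcing $D_\phi(\mu,Y) = 0$ a.s.\ and hence $Y = \mu$ a.s., which establishes uniqueness up to almost-sure equivalence.

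The main obstacle is purely measure-theoretic bookkeeping rather than conceptual: one must ensure every conditional expectation appearing above is well-defined and finite so that the manipulations are legitimate. The hypotheses $\E[X] < \infty$ and $\E[\phi(X)] < \infty$ are exactly what is needed for $\E[X\mid Z]$ and $\E[\phi(X)\mid Z]$ to exist, and restricting attention to $Y$ with $\E[D_\phi(X,Y)] < \infty$ lets us split the expectation of the difference without an $\infty - \infty$ ambiguity; the remaining integrability of $\langle X - Y, \nabla\phi(Y)\rangle$ follows since it equals $\phi(X) - \phi(Y) - D_\phi(X,Y)$ up to rearrangement. These points should be handled carefully but involve no new ideas.
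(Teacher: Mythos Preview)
Your argument is correct and is precisely the standard generalized-Pythagorean proof of this result. Note, however, that the paper does not actually prove this proposition: it merely restates Theorem~1 of \cite{banerjee2005optimality} as a black box and then applies it, so there is no ``paper's own proof'' to compare against. Your write-up therefore goes beyond what the paper does, and the approach you give is exactly the one in the cited reference.
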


Using the above results, we show that angular calibration with $\hat{\theta}=\theta_*$ minimizes Bregman divergence to true label distribution among predictors of the form $F(\what^\top x_{\mathrm{new}})$. 

\begin{theorem}[Optimality of angular predictor]\label{optimalThmPop}
Let $\phi: \mathbb{R}^2 \mapsto \mathbb{R}$ be any strictly convex differentiable function, and let $D_\phi$ be the corresponding Bregman loss function. Let $\E_{x_\mathrm{new}}[\phi(q_\star)]$ be finite. Then, the expected Bregman loss $\E_{x_\mathrm{new}}\left[D_\phi(q_{ \star}, \hat{q}_{ F})\right]$ admits a unique minimizer (up to a.s. equivalence) among all $q_{ F}, \forall F \in \mathcal{F}:=\{f: \R \to [0,1]\}$. Let this minimizer be $F_\star = \arg \min _{F\in \mathcal{F}} \E_{x_\mathrm{new}}\left[D_\phi(q_{ \star}, \hat{q}_{ F})\right]$. We then have that almost surely
\begin{equation*}
\begin{aligned}
    \hat{q}_{\mathrm{ang}}(\theta_*)=F_*(\what^\top x_{\mathrm{new}})
\end{aligned}
\end{equation*}
where $\hat{q}_{\mathrm{ang}}(\theta_*)$ is the label prediction probability vector by angular calibration given in \eqref{probvec} with $\hat{\theta}$ replaced by $\theta_*$. 

\end{theorem}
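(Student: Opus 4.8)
The plan is to apply Proposition~\ref{bregprop} with the $\R^2$-valued random vector $X = q_\star$ and the conditioning variable $Z = \what^\top x_{\mathrm{new}}$. First I would verify the hypotheses: $\E_{x_{\mathrm{new}}}[q_\star]$ is finite because $q_\star$ takes values in the probability simplex, and $\E_{x_{\mathrm{new}}}[\phi(q_\star)]$ is finite by assumption. Proposition~\ref{bregprop} then yields that, among all $\sigma(Z)$-measurable $\R^2$-valued functions $Y$, the expected Bregman loss $\E_{x_{\mathrm{new}}}[D_\phi(q_\star, Y)]$ is uniquely minimized (up to a.s.\ equivalence) by $Y = \E[q_\star \mid Z]$.

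The next step is to observe that this unconstrained minimizer already lies in the feasible class $\{\hat{q}_F : F \in \mathcal{F}\}$. Indeed, $q_\star = (\sigma(w_\star^\top x_{\mathrm{new}}),\, 1-\sigma(w_\star^\top x_{\mathrm{new}}))^\top$ always lies on the segment $\{(p,1-p): p\in[0,1]\}$, and conditional expectation preserves this convex constraint, so $\E[q_\star\mid Z]$ lies on the same segment; by the Doob--Dynkin lemma there is a measurable $F_\star:\R\to[0,1]$ with $\E[\sigma(w_\star^\top x_{\mathrm{new}})\mid \what^\top x_{\mathrm{new}}] = F_\star(\what^\top x_{\mathrm{new}})$, i.e.\ $\E[q_\star\mid Z] = \hat{q}_{F_\star}$. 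Since the minimizer over the larger class $\{Y\in\sigma(Z)\}$ is unique and belongs to the smaller class $\{\hat{q}_F\}$, it is a fortiori the unique minimizer over $\{\hat{q}_F\}$, which identifies the $F_\star$ of the statement with this conditional expectation.

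It then remains to compute $F_\star$ in closed form and match it to $\hat{f}_{\mathrm{ang}}(\cdot;\theta_*)$. Using the Gaussian conditional law \eqref{conditionalid}, namely $[w_\star^\top x_{\mathrm{new}}\mid \what^\top x_{\mathrm{new}}=u] \stackrel{L}{=} \tfrac{\cos\theta_*}{\|\what\|_\Sigma}\,u + \sin\theta_*\,Z$ with $Z\sim N(0,1)$, we get
\[
F_\star(u) = \E_Z\!\left[\sigma\!\left(\tfrac{\cos\theta_*}{\|\what\|_\Sigma}\,u + \sin\theta_*\,Z\right)\right] = \hat{f}_{\mathrm{ang}}(u;\theta_*),
\]
which is exactly the defining formula \eqref{definterp} with $\hat{\theta}=\theta_*$. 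Hence $\hat{q}_{\mathrm{ang}}(\theta_*) = \hat{q}_{F_\star} = F_\star(\what^\top x_{\mathrm{new}})$ almost surely, as claimed. I do not anticipate a serious obstacle; the one point requiring care is the passage between minimizing over the constrained class $\{\hat{q}_F\}$ and over all $\sigma(Z)$-measurable functions, which the feasibility observation in the second paragraph resolves, and a secondary routine point is pushing the conditional law through $\sigma$ to pull $\E_Z$ outside, justified by boundedness (and continuity) of $\sigma$.
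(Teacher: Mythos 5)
Your proof is correct and follows essentially the same route as the paper: both apply Proposition~\ref{bregprop} with $X=q_\star$, $Z=\what^\top x_{\mathrm{new}}$, and then identify the conditional expectation with $\hat{q}_{\mathrm{ang}}(\theta_*)$ via the Gaussian conditional law \eqref{conditionalid}. You spell out a couple of steps the paper leaves implicit---verifying the moment hypotheses, checking that the unconstrained $\sigma(Z)$-measurable minimizer actually lies in the constrained class $\{\hat{q}_F\}$, and invoking Doob--Dynkin to extract $F_\star$---which is a welcome bit of added rigor but not a different argument.
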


\begin{proof}[Proof of \Cref{optimalThmPop}]
    Firstly, we set $X,Y,Z$ in \Cref{bregprop} as
    $$X\gets \mqty(\sigma(w_\star^\top x_\mathrm{new}) \\ 1-\sigma(w_\star^\top x_\mathrm{new})), \quad Y\gets \mqty(F(\widehat{w}^\top x_\mathrm{new}) \\ 1-F(\widehat{w}^\top x_\mathrm{new})), \quad Z\gets \widehat{w}^\top x_\mathrm{new}.$$
    The result then follows from \Cref{bregprop} and the following
    $$\E[X\mid Z]=\E \qty[\mqty(\sigma(w_\star^\top x_\mathrm{new}) \\ 1-\sigma(w_\star^\top x_\mathrm{new}))\mid x_{\mathrm{new }}^{\top} \widehat{w}]=\mqty(\hat{f}_{\mathrm{ang}}(x_\mathrm{new})\\ 1- \hat{f}_{\mathrm{ang}}(x_\mathrm{new}))$$
    where we used \eqref{conditionalid} and \eqref{definterp} for the last equality. 
\end{proof}

Now we are ready to state proof of \Cref{optimalThm}.
\begin{proof}
    Using result from \Cref{optimalThmPop}, it suffices to show that
    $$\norm{\hat{q}_{\mathrm{ang}} (\hat{\theta})-\hat{q}_{\mathrm{ang}} (\theta_*)}_2\to 0$$
    in probability. This is an immediate consequence of the continuous mapping theorem under the assumption $\sigma$ is continuous. 
\end{proof}

\section{Proof of \Cref{plattjust}} \label{proofplattjust}
Before proving \Cref{plattjust}, we first state two classic analysis results that we will later use. 

\begin{proposition}[Theorem 5.7, \cite{van2000asymptotic}]\label{VDV}
Let $\ell_n$ be random functions on $\mathcal{H}$, $\ell^{\star}$ be a fixed function on $\mathcal{H}$, and $\theta^{\star} \in \mathcal{H}$ such that (i) uniform convergence of $n^{-1} \ell_n$ to $\ell^{\star}$ holds: \begin{equation*}
\sup _{\theta \in \mathcal{H}}\left|\frac{1}{n} \ell_n(\theta)-\ell^{\star}(\theta)\right| \xrightarrow[n \rightarrow \infty]{\mathrm{in prob.}} 0,
\end{equation*} (ii) the mode of $\ell^{\star}$ is well-separated, i.e for all $\varepsilon>0$,
\begin{equation*}
\sup _{\theta \in \mathcal{H}: d\left(\theta, \theta^{\star}\right) \geq \varepsilon} \ell^{\star}(\theta)<\ell^{\star}\left(\theta^{\star}\right)
\end{equation*}
Then any sequence $\hat{\theta}_n$ maximizing $\ell_n$ converges in probability to $\theta^{\star}$.
\end{proposition}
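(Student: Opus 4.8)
The plan is to use only the defining optimality of $\hat\theta_n$ together with the two-sided control supplied by (i), and then to convert closeness of objective values into closeness of arguments via the well-separation hypothesis (ii); this is the classical argmax-consistency argument, so I sketch it essentially in full. Fix $\varepsilon>0$ and set $\delta_\varepsilon:=\ell^{\star}(\theta^{\star})-\sup_{\theta\in\mathcal H:\,d(\theta,\theta^{\star})\geq\varepsilon}\ell^{\star}(\theta)$, which is strictly positive by (ii). Write $R_n:=\sup_{\theta\in\mathcal H}|\tfrac1n\ell_n(\theta)-\ell^{\star}(\theta)|$, so that assumption (i) reads exactly $R_n\to0$ in probability. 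It then suffices to show $\mathbb{P}(d(\hat\theta_n,\theta^{\star})\geq\varepsilon)\to0$ for this fixed $\varepsilon$.

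The core step is a short chain of inequalities. Because $\hat\theta_n$ maximizes $\ell_n$, we have $\tfrac1n\ell_n(\hat\theta_n)\geq\tfrac1n\ell_n(\theta^{\star})$. Applying the uniform bound $R_n$ at the random point $\hat\theta_n$ and at the deterministic point $\theta^{\star}$ gives $\tfrac1n\ell_n(\hat\theta_n)\leq\ell^{\star}(\hat\theta_n)+R_n$ and $\tfrac1n\ell_n(\theta^{\star})\geq\ell^{\star}(\theta^{\star})-R_n$, and combining the three relations yields
\[
\ell^{\star}(\hat\theta_n)\ \geq\ \ell^{\star}(\theta^{\star})-2R_n.
\]
It is crucial here that the error in (i) is uniform over all of $\mathcal H$, so that it may be invoked at the random argument $\hat\theta_n$; mere pointwise convergence would not license this step.

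To finish, I would convert this value bound into an argument bound. On the event $\{d(\hat\theta_n,\theta^{\star})\geq\varepsilon\}$, the definition of $\delta_\varepsilon$ forces $\ell^{\star}(\hat\theta_n)\leq\ell^{\star}(\theta^{\star})-\delta_\varepsilon$, which together with the displayed inequality implies $2R_n\geq\delta_\varepsilon$. Hence $\{d(\hat\theta_n,\theta^{\star})\geq\varepsilon\}\subseteq\{2R_n\geq\delta_\varepsilon\}$, and since $R_n\to0$ in probability the latter event has vanishing probability. As $\varepsilon>0$ was arbitrary, $\hat\theta_n\to\theta^{\star}$ in probability, which is the claim.

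There is no genuine obstacle here; the only point demanding care is the bookkeeping of the two one-sided applications of (i) — one at the fixed $\theta^{\star}$, where pointwise convergence would already suffice, and one at the random maximizer $\hat\theta_n$, where uniformity over $\mathcal H$ is actually used. If $\hat\theta_n$ were only a near-maximizer, i.e. $\tfrac1n\ell_n(\hat\theta_n)\geq\sup_{\theta}\tfrac1n\ell_n(\theta)-\eta_n$ with $\eta_n\to0$ in probability, the same argument goes through after replacing $2R_n$ by $2R_n+\eta_n$. Measurability of $\{d(\hat\theta_n,\theta^{\star})\geq\varepsilon\}$ is immaterial, since the argument only needs that this set is contained in the measurable event $\{2R_n\geq\delta_\varepsilon\}$.
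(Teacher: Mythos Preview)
The paper does not supply its own proof of this proposition; it is quoted verbatim as Theorem~5.7 of \cite{van2000asymptotic} and used as a black box in the proof of Theorem~\ref{plattjust}. Your argument is the standard argmax-consistency proof and is correct as written, so there is nothing to compare against and no gap to flag.
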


\begin{proposition}[Theorem 10.8, \cite{rockafellar2015convex}]\label{rock}
Let $C$ be a relatively open convex set, and let $f_1, f_2, \ldots$, be a sequence of finite convex functions on $C$. Suppose that the sequence converges pointwise on a dense subset of $C$, i.e. that there exists a subset $C^{\prime}$ of $C$ such that its closure satisfies $\mathrm{cl} C^{\prime} \supset C$ and, for each $x \in C^{\prime}$, the limit of $f_1(x)$, $f_2(x), \ldots$, exists and is finite. The limit then exists for every $x \in C$, and the function $f$, where
\begin{equation*}
f(x)=\lim _{i \rightarrow \infty} f_i(x)
\end{equation*}
is finite and convex on $C$. Moreover the sequence $f_1, f_2, \ldots$, converges to $f$ uniformly on each closed bounded subset of $C$.
\end{proposition}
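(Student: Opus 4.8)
\textbf{Proof proposal for \Cref{plattjust}.}

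The plan is to split the argument into two stages: an exact algebraic identification of the angular predictor with a probit Platt curve, and then an $M$-estimation consistency argument for the Platt fit driven by \Cref{optimalThmPop}, the convex-analysis result \Cref{rock}, and the argmax theorem \Cref{VDV}. First I would prove \eqref{correctform}--\eqref{ABstardef}, which is pure algebra valid for every fixed $n,d$. Writing $u=\what^\top x_{\mathrm{new}}$ and substituting $\sigma=\Phi(a\cdot+b)$ into \eqref{definterp}, the argument of $\sigma$ is $\cos(\theta_*)u/\|\what\|_\Sigma+\sin(\theta_*)Z$, so $\hat{f}_{\mathrm{ang}}(u;\theta_*)=\E_Z[\Phi(a\cos(\theta_*)u/\|\what\|_\Sigma+b+a\sin(\theta_*)Z)]$. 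Applying the Gaussian convolution identity $\E_Z[\Phi(cZ+d)]=\Phi(d/\sqrt{1+c^2})$ (seen by writing $\Phi(cZ+d)=\mathbb{P}(W-cZ\le d\mid Z)$ for an independent $W\sim N(0,1)$) with $c=a\sin(\theta_*)$, $d=a\cos(\theta_*)u/\|\what\|_\Sigma+b$, and then matching the result to $\sigma(A_*u+B_*)=\Phi(aA_*u+aB_*+b)$ coefficient by coefficient, produces exactly \eqref{ABstardef}; the factors of $a$ cancel in the slope term, so this holds for any $a\neq0$ and in particular gives $\hat{f}_{\mathrm{ang}}(\cdot;\theta_*)\in\mathcal{F}_{\mathrm{platt}}$.

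Next I would identify the population limit of the Platt objective and pin down its minimizer. Conditioning on the training data (so $\what,\theta_*,A_*,B_*$ are deterministic), the strong law of large numbers gives, for each fixed $(A,B)$, $\tfrac{1}{n_{\ho}}\hat{\ell}_{n_{\ho}}(A,B)\to\ell^\star(A,B)$ a.s., where $\ell^\star(A,B)=\E_x[-\sigma(w_\star^\top x)\log\sigma(A\what^\top x+B)-(1-\sigma(w_\star^\top x))\log(1-\sigma(A\what^\top x+B))]$, using $\E[y_{\ho}\mid x_{\ho}]=\sigma(w_\star^\top x_{\ho})$. The bound $-\log\Phi(s)\le C(1+s^2)$ together with Gaussianity of $\what^\top x$ makes $\ell^\star$ finite and, via dominated convergence on compacts, continuous on $\mathcal{H}$. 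Since $\ell^\star(A,B)=\E_x[D_{\mathrm{KL}}(q_\star\|\hat{q}_{F_{A,B}})]+\E_x[H(q_\star)]$ where $F_{A,B}(u)=\sigma(Au+B)$ and $\E_x[H(q_\star)]\in[0,\log2]$ is a finite additive constant, minimizing $\ell^\star$ over $(A,B)$ is equivalent to minimizing $\E_x[D_\phi(q_\star,\hat{q}_{F_{A,B}})]$ with $\phi$ the negative Shannon entropy (whose finiteness hypothesis in \Cref{optimalThmPop} holds since $\phi(q_\star)\in[-\log2,0]$). By \Cref{optimalThmPop}, the unique (up to a.s.\ equivalence) minimizer of $\E_x[D_\phi(q_\star,\hat{q}_F)]$ over all $F:\R\to[0,1]$ is $F_\star=\hat{f}_{\mathrm{ang}}(\cdot;\theta_*)$, which by the first step already lies in $\mathcal{F}_{\mathrm{platt}}$ at parameters $(A_*,B_*)\in\mathcal{H}$. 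Because $\what\neq0$ makes $\what^\top x$ nondegenerate Gaussian and $\sigma$ strictly monotone, distinct $(A,B)$ yield $\what^\top x$-a.s.\ distinct functions $u\mapsto\sigma(Au+B)$, so every $(A,B)\in\mathcal{H}\setminus\{(A_*,B_*)\}$ has $\ell^\star(A,B)>\ell^\star(A_*,B_*)$ strictly; continuity plus compactness of $\mathcal{H}$ then upgrades this to the well-separation hypothesis of \Cref{VDV}.

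I would then close the argument with a uniform law of large numbers and the sup-norm estimate. The key observation is convexity: $(A,B)\mapsto-\log\Phi(a(Au+B)+b)$ and $(A,B)\mapsto-\log\Phi(-(a(Au+B)+b))$ are convex (log-concavity of $\Phi$ composed with an affine map), so $(A,B)\mapsto\tfrac{1}{n_{\ho}}\hat{\ell}_{n_{\ho}}(A,B)$ and $\ell^\star$ are finite convex functions on $\R^2$. Taking a countable dense $C'\subset\R^2$, the a.s.\ pointwise convergence holds simultaneously on $C'$ on a single probability-one event, and on that event \Cref{rock} promotes it to uniform convergence on compact sets (the limit being a continuous convex function agreeing with the continuous $\ell^\star$ on the dense $C'$), giving $\sup_{(A,B)\in\mathcal{H}}|\tfrac{1}{n_{\ho}}\hat{\ell}_{n_{\ho}}(A,B)-\ell^\star(A,B)|\to0$ a.s. Then \Cref{VDV} (with signs flipped to treat minimization) and the well-separated minimizer of the previous step yield $\hat{A}^{n_{\ho}}\to A_*$, $\hat{B}^{n_{\ho}}\to B_*$ in probability. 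For \eqref{conv}, set $\alpha_{n_{\ho}}=a\hat{A}^{n_{\ho}}\to\alpha_*=aA_*$ and $\beta_{n_{\ho}}=a\hat{B}^{n_{\ho}}+b\to\beta_*=aB_*+b$, so the quantity equals $\sup_u|\Phi(\alpha_{n_{\ho}}u+\beta_{n_{\ho}})-\Phi(\alpha_*u+\beta_*)|$; in the nondegenerate case $\cos(\theta_*)\neq0$ we have $\alpha_*\neq0$, and the map $(\alpha,\beta)\mapsto\Phi(\alpha\cdot+\beta)$ is continuous into $(C_b(\R),\|\cdot\|_\infty)$ at $(\alpha_*,\beta_*)$: on a fixed large $u$-window the difference is bounded by $\|\phi\|_\infty(|\alpha_{n_{\ho}}-\alpha_*|\,|u|+|\beta_{n_{\ho}}-\beta_*|)$, and outside it both terms lie within a small tolerance of $0$ or $1$ once $\alpha_{n_{\ho}}$ is close to the nonzero $\alpha_*$. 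The continuous mapping theorem then delivers \eqref{conv} in probability.

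\textbf{Main obstacle.} The delicate step is the second one: converting the ``unique up to a.s.\ equivalence'' conclusion of \Cref{optimalThmPop}/\cite{banerjee2005optimality} into the strict, quantitative well-separation over the compact set $\mathcal{H}$ required by \Cref{VDV}. This rests on the a.s.-injectivity of $(A,B)\mapsto\sigma(Au+B)$ and on the finiteness and continuity of $\ell^\star$ on $\mathcal{H}$, both of which lean on the $-\log\Phi(s)=O(s^2)$ tail estimate and the Gaussianity of $\what^\top x$. The uniform law of large numbers is the other potentially thorny point, but the convexity-plus-\Cref{rock} route sidesteps any bracketing or empirical-process machinery; the tail control in \eqref{conv} is routine but does genuinely use $\cos(\theta_*)\neq0$.
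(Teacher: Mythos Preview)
There is a mismatch: the stated proposition is Theorem~10.8 of \cite{rockafellar2015convex}, which the paper cites without proof as a classical convex-analysis tool. Your proposal does not attempt to prove this proposition; it is a proof of \Cref{plattjust}, which \emph{uses} \Cref{rock} as a lemma. If the intended target was \Cref{rock} itself, you have not addressed it.

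If the intended target was \Cref{plattjust}, your argument is correct and largely parallels the paper's, with one genuine difference in how condition~(ii) of \Cref{VDV} (well-separation of the population minimizer) is verified. The paper computes the Hessian of the integrand in $\ell^\star(A,B)$ directly, shows it is positive-definite a.s.\ when $a\neq0$ via the strict convexity of $-\log\Phi$ and $-\log(1-\Phi)$, and concludes that $\ell^\star$ is strictly convex; uniqueness and well-separation on the compact $\mathcal{H}$ then follow immediately. You instead route through \Cref{optimalThmPop}: the global Bregman minimizer over all measurable $F$ is $\hat f_{\mathrm{ang}}(\cdot;\theta_*)$, which by your first step lies in $\mathcal{F}_{\mathrm{platt}}$ at $(A_*,B_*)$; injectivity of $(A,B)\mapsto\sigma(A\cdot+B)$ on the support of $\what^\top x$, together with continuity of $\ell^\star$ and compactness of $\mathcal{H}$, then gives well-separation. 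Both routes are valid. The paper's is more self-contained (it does not appeal to \Cref{bregprop}) and yields strict convexity of $\ell^\star$ as a byproduct; yours makes the conceptual link to Bregman optimality explicit and is more careful about the final sup-norm step \eqref{conv}, correctly flagging the need for $\cos(\theta_*)\neq0$, a degenerate case the paper's ``follows immediately'' glosses over.
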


Now we are ready to prove \Cref{plattjust}. 
\begin{proof}[Proof of \Cref{plattjust}]
\eqref{correctform} is obtained from applying the well-known identity below for probit function $\Phi(\cdot)$
$$\mathbb{E} \Phi(\mu+\sigma \cdot Z)=\Phi\left(\frac{\mu}{\sqrt{1+\sigma^2}}\right), \qquad Z\sim N(0,1) $$ 
to \eqref{definterp}. 

To prove that $\hat{A}^{n_{\mathrm{ho}}} \to A_*, \hat{B}^{n_{\mathrm{ho}}} \to B_*$ as $n_{\mathrm{ho}}\to \infty$, we would like to apply \eqref{VDV} by setting $n\gets n_{\ho}, \theta \gets \{A,B\}, \theta^\star \gets \{A_*, B_*\}$,
\begin{equation}\label{samp}
    \ell_n(\theta) \gets \ell_{n_{\ho}}(A,B):= \sum_{i=1}^{n_{\ho}}-y_{\ho,i}\log(F_{A,B}(\widehat{w}^\top x_{\ho, i}))-(1-y_{\ho,i})\log(1-F_{A,B}(\widehat{w}^\top x_{\ho, i})),
\end{equation}
and
\begin{equation}\label{CEpop}
    \begin{aligned}
    \ell^\star(\theta) \gets \ell^\star(A,B):=
& \mathbb{E}_{x_{\text {new }}}\bigg[-\sigma\left(w_{\star}^{\top} x_{\text {new }}\right) \log \left(F_{A, B}\left(\widehat{w}^{\top} x_{\text {new }}\right)\right)\\
    &\qquad \qquad -\left(1-\sigma\left(w_{\star}^{\top} x_{\text {new }}\right)\right) \log \left(1-F_{A, B}\left(\widehat{w}^{\top} x_{\text {new }}\right)\right)\bigg].
\end{aligned}
\end{equation}
where we used the notation
$$F_{A,B}(u):=\sigma(Au+B)=\Phi(a(Au+B)+b).$$

Here, RHS of \eqref{CEpop} is up to an affine transform of the KL divergence \eqref{poploss} and, therefore, it follows from \eqref{correctform} and \eqref{optsol} that its minimizer is indeed $A_*, B_*$. 

To verify condition (i) of \Cref{VDV}, we first note that \eqref{samp} converges to \eqref{CEpop} point-wise in probability following from law of large number theorem. Furthermore, we note that the functions
$$f_1(u):= -\log(\Phi(u)), \qquad f_2(u):= -\log(1-\Phi(u))$$
are both strictly convex functions. To see this, one can show second derivatives are positive for all $u\in \R$ by utilizing the two elementary inequalities: $u \Phi(u)+\Phi'(u)>0$, $ u \Phi(u)+\Phi'(u)>u$. It then follows that $\ell_{n_{\ho}}(A,B)$ is a convex function of $(A,B)$ on the domain $\R^2$. Uniform convergence on $\mathcal{H}$ is then an application of \Cref{rock}.

To verify condition (ii) of \Cref{VDV}, we only need to show that $\ell^\star(A,B)$ is a strictly convex function in $(A,B)$. Let us write the inside of the expectation of \eqref{CEpop} as
$$f(A,B)=\sigma(H)f_1(aA\cdot H+aB+b)+(1-\sigma(H))f_2(aA\cdot H+aB+b)$$
where $H:=w_\star^\top x_{\mathrm{new}}\sim N(0,1)$. Then, we have that
$$\nabla^2 f(A,B)=\big(\sigma(H)\cdot f_1^{\prime \prime}(aA\cdot H+aB+b) \cdot +(1-\sigma(H)) f_2^{\prime \prime}(aA\cdot H+aB+b)\big) \cdot \mqty(a^2 H^2 & a^2 H\\a^2 H & a^2)$$
which is positive-definite almost surely when $a\neq 0$. Hence, we have that almost surely
$$f\left(t (A_1, B_1)+(1-t) (A_1, B_1)\right)<t f\left((A_1, B_1)\right)+(1-t) f\left((A_1, B_1)\right)$$
which implies that
$$\mathbb{E}_{x_{\text {new }}} f\left(t (A_1, B_1)+(1-t) (A_1, B_1)\right)<t\mathbb{E}_{x_{\text {new }}} f\left((A_1, B_1)\right)+(1-t) \mathbb{E}_{x_{\text {new }}}f\left((A_1, B_1)\right).$$
The claim that $\ell^\star(A,B)$ is strictly convex follows. 

It then follows from \Cref{VDV} that $\hat{A}^{n_{\mathrm{ho}}} \to A_*, \hat{B}^{n_{\mathrm{ho}}} \to B_*$ in probability as $n_{\mathrm{ho}}\to \infty$. The uniform convergence $\hat{f}_{\mathrm{platt}}^{n_{\mathrm{ho}}}(u) \rightarrow \hat{f}_{\mathrm{ang}}(u)$ follows immediately.

\end{proof}

\section{Inner product estimation}\label{complecons}
We restate the following results from Theorem 4.4, \cite{bellec2022observable}. We note that the quantity $\frac{\hat{r}^{4}}{\hat{v}^{2} \hat{t}^{2}}$ in the error bound is observable and is typically of constant order in the proportional regime. See \cite{bellec2022observable} for details. 
\begin{theorem}\label{bellecthm}
Suppose $\ell$ is continuously differentiable and $g$ is strongly convex and twice differential penalty function. Assume also that $\frac{1}{2 \delta} \leq \frac{d}{n} \leq \frac{1}{\delta}$ for some $\delta>0$, for arbitrarily large probability $1-\delta$, the following holds
$$
\E \left|\hat{a}_{*}^2-\left\langle w_{\star}, \widehat{w}\right\rangle_{\Sigma}^2\right| \leq \frac{C\hat{r}^{4}}{\hat{v}^{2} \hat{t}^{2}} \cdot n^{-1 / 2}
$$
where $C$ is a constant depending only on $g$ and $\delta$. 
\end{theorem}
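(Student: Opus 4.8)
The plan is to specialize the ``observable adjustments'' framework of \cite{bellec2022observable}: \Cref{bellecthm} is a restatement of Theorem~4.4 there, so the argument is essentially a transcription to our notation, and I only sketch its skeleton. First I would fix the relevant geometry by decomposing the estimator along the signal direction. Since $\|w_\star\|_\Sigma = 1$, write $\widehat{w} = a_* w_\star + w_{\perp}$ with $a_* := \langle w_\star, \widehat{w}\rangle_\Sigma$ and $\langle w_{\perp}, w_\star\rangle_\Sigma = 0$, so that $\|w_{\perp}\|_\Sigma^2 = \|\widehat{w}\|_\Sigma^2 - a_*^2$; for a fresh row $x\sim N(0,\Sigma)$ this is exactly what underlies the cosine/sine decomposition used earlier. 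The quantity $a_*^2 = \langle w_\star, \widehat{w}\rangle_\Sigma^2$ is what the numerator and denominator of $\hat{a}_*^2$ are engineered to isolate.

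The core step is to establish two ``observable'' identities, each valid up to $O_P(n^{-1/2})$ errors. Applying Gaussian integration by parts (Stein's lemma) to the smooth functionals $X \mapsto \widehat{w}(X)$ and $X \mapsto \hat{\psi}(X)$, together with the stationarity condition $\frac1n X^\top \hat{\psi} = \nabla g(\widehat{w})$, one shows that the expression inside the square in the numerator of $\hat{a}_*^2$,
\begin{equation*}
\tfrac{\hat{v}}{n}\|X\widehat{w} - \hat{\gamma}\hat{\psi}\|^2 + \tfrac1n \hat{\psi}^\top X\widehat{w} - \hat{\gamma}\hat{r}^2,
\end{equation*}
concentrates around $a_*$ times an observable factor $M$, while the denominator of $\hat{a}_*^2$ concentrates around $M^2$. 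The particular combinations of $\hat{v}$ and $\hat{\gamma}$---built from $D = \operatorname{diag}(\ell_y''(X\widehat{w}))$ and the resolvent $\hat{H} = (X^\top D X + n\nabla^2 g(\widehat{w}))^{-1}$---are precisely the ``adjusted degrees of freedom'' corrections that make these identities valid to order $n^{-1/2}$ even though $d/n$ stays bounded away from $0$ and $\infty$. Taking the ratio, the factor $M^2$ cancels and one is left with $a_*^2$.

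Next I would upgrade ``concentrates around'' to the quantitative $n^{-1/2}$ rate. On a favorable event of probability at least $1-\delta$---bounded operator norm of $\Sigma^{-1/2} X^\top$, a uniformly well-conditioned $\hat{H}$ (using strong convexity of $g$), and entrywise control of $D$ from smoothness of $\ell$---the relevant empirical functionals of $X$ are Lipschitz in $X$, so the Gaussian Poincar\'e inequality (Gaussian Lipschitz concentration) gives fluctuations of order $n^{-1/2}$; the dependence of $\widehat{w}$, $\hat{\psi}$ on the individual rows of $X$ and of $\hat{H}$ on its leave-one-out versions is handled by a rank-one perturbation / leave-one-out argument. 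Finally I propagate the error through the ratio and the square: on this event the denominator of $\hat{a}_*^2$ is bounded away from zero, and pushing the $n^{-1/2}$ fluctuations through the ratio amplifies them by the observable factor $\hat{r}^4(\hat{v}^2\hat{t}^2)^{-1}$, which yields the stated bound with $C$ depending only on $g$ and $\delta$ through the resolvent and operator-norm estimates.

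The main obstacle is exactly this quantitative resolvent control: one needs high-probability upper bounds on $\|\hat{H}\|$ and on the leave-one-out perturbations of $\hat{H}$, $\widehat{w}$ and $\hat{\psi}$, and one must verify that the favorable event on which the functionals are Lipschitz has probability $1-\delta$. Strong convexity of $g$ keeps $X^\top D X + n\nabla^2 g(\widehat{w})$ uniformly invertible, and the hypothesis $d/n \in [1/(2\delta), 1/\delta]$ keeps $X^\top D X$ of the right order; granting these, the remainder is the bookkeeping already carried out in \cite{bellec2022observable}, which we invoke directly rather than reprove.
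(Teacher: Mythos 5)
The paper gives no proof of this statement: it is presented verbatim as a restatement of Theorem~4.4 of Bellec (2022), with the reader pointed to that reference for the argument. Your sketch --- Stein's-lemma identities giving the numerator $\approx a_* M$ and denominator $\approx M^2$, the $\hat{v},\hat{\gamma}$ degrees-of-freedom adjustments, Gaussian Lipschitz concentration on a favorable event, leave-one-out resolvent control using strong convexity of $g$ --- is an accurate summary of what that cited theorem does, and since you too ultimately invoke the reference rather than reproving it, your approach matches the paper's exactly.
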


\section{Sign estimation}\label{signestsec}

\begin{proof}[Proof of \Cref{consistThm}]
    With respect to randomness in validation dataset (that is, $\widehat{w}$ is treated as deterministic), we have that $\widehat{w}^{\top} x_{i}^{\mathrm{ho}} \cdot y_{i}^{\mathrm{ho}}$ are iid across $i$ and satisfies that
$$
\begin{gathered}
\widehat{w}^{\top} x_{i}^{\mathrm{ho}} \cdot y_{i}^{\mathrm{ho}} \stackrel{L}{=} \widehat{w}^{\top} x_{i}^{\mathrm{ho}} \cdot \operatorname{Bern}_{i}\left(\sigma\left(\frac{\left\langle w_{\star}, \widehat{w}\right\rangle_{\Sigma}}{\widehat{w}^{\top} \Sigma \widehat{w}} \widehat{w}^{\top} x_{i}^{\mathrm{ho}}+\sin \left(\theta_{*}\right) \cdot Z_{i}\right)\right) \\
\qquad \stackrel{L}{=}\|\widehat{w}\|_{\Sigma} U_{i} \cdot \operatorname{Bern}_{i}\left(\sigma\left(\cos \left(\theta_{*}\right) U_{i}+\sin \left(\theta_{*}\right) \cdot Z_{i}\right)\right)=H_{i}
\end{gathered}
$$
where $Z_{i} \stackrel{i i d}{\sim} N(0,1)$ and we have used $w^{\top} x_{i}^{\mathrm{ho}} \stackrel{L}{=}\|\widehat{w}\|_{\Sigma} \cdot U_{i}$ for $U_{i} \stackrel{\mathrm{ iid }}{\sim} N(0,1)$. It follows from Gaussian integration by parts that
$$
\mathbb{E} H_{i}=\left\langle w_{\star}, \widehat{w}\right\rangle_{\Sigma} \cdot \mathbb{E} \sigma^{\prime}\left(\cos \left(\theta_{*}\right) U_{i}+\sin \left(\theta_{*}\right) \cdot Z_{i}\right)=\left\langle w_{\star}, \widehat{w}\right\rangle_{\Sigma} \mathbb{E} \sigma^{\prime}(Z).
$$
Meanwhile, $H_{i}$ is subGaussian with subGaussian norm
$$
\left\|H_{i}\right\|_{\psi_{2}}^{2} \leq\|\widehat{w}\|_{\Sigma}^{2}\left\|U_{i}\right\|_{\psi_{2}}^{2} \leq 3\|\widehat{w}\|_{\Sigma}^{2}.
$$
By theorem assumption, $\mathbb{E} \sigma^{\prime}(Z)>0$ and
$$
\operatorname{sign}\left(\left\langle w_{\star}, \widehat{w}\right\rangle_{\Sigma} \mathbb{E} \sigma^{\prime}(Z)\right)=\operatorname{sign}\left(\left\langle w_{\star}, \widehat{w}\right\rangle_{\Sigma}\right)
$$
So the sign identification of $\widehat{\mathrm{sgn}}$ is correct if the following event holds
$$
\left|\frac{1}{n_{\mathrm{ho}}} \sum_{i=1}^{n_{\mathrm{ho}}} w^{\top} x_{i} \cdot y_{i}-\left\langle w_{\star}, \widehat{w}\right\rangle_{\Sigma} \mathbb{E} \sigma^{\prime}(Z)\right|<\left|\left\langle w_{\star}, \widehat{w}\right\rangle_{\Sigma} \mathbb{E} \sigma^{\prime}(Z)\right|.
$$
By Hoeffding's inequality,
$$
\begin{aligned}
\mathbb{P}_{\mathrm{ho}}&\left(\left|\frac{1}{n_{\mathrm{ho}}} \sum_{i=1}^{n_{\mathrm{ho}}} w^{\top} x_{i} \cdot y_{i}-\left\langle w_{\star}, \widehat{w}\right\rangle_{\Sigma} \mathbb{E} \sigma^{\prime}(Z)\right|>\left|\left\langle w_{\star}, \widehat{w}\right\rangle_{\Sigma} \mathbb{E} \sigma^{\prime}(Z)\right|\right) \\
& \leq 2 \exp \left(-\frac{c n_{\mathrm{ho}}\left(\mathbb{E} \sigma^{\prime}(Z)\right)^{2}\left\langle w_{\star}, \widehat{w}\right\rangle_{\Sigma}^{2}}{\|\widehat{w}\|_{\Sigma}^{2}}\right)=2 \exp \left(-c n_{\mathrm{ho}}\left(\cos \left(\theta_{*}\right) \cdot \mathbb{E} \sigma^{\prime}(Z)\right)^{2}\right).
\end{aligned}
$$
The theorem statement follows. 
\end{proof}

\section{Angular calibration for multi-index models}\label{MIdexsec}

Let $x_{\mathrm{new}}\sim\mathcal N(0,\Sigma)\subset\mathbb R^d$. Fix $K\ge2$ and let $W_\star=[w_{\star 1},\ldots,w_{\star K}]\in\mathbb R^{d\times K}$. Define
\[
G:=W_\star^\top x_{\mathrm{new}}\in\mathbb R^K,\quad \pi(x_{\mathrm{new}})=g(G),
\]
where $g$ is a generalized link (vector- or scalar-valued). This setup covers:
\begin{itemize}\itemsep2pt
  \item Two-layer nets (frozen outer layer): $g(u)=\sum_k a_k\,\sigma(u_k)$
  \item Multi-class softmax: $g(u)=\mathrm{softmax}(u)$
  \item Additive index model: $g(u)=\sum_k f_k(u_k)$
  \item Interaction index model: $g(u)=\sum_k f_k(u_k)+\sum_{k<\ell} h_{k\ell}(u_k,u_\ell)$
\end{itemize}

The following extends angular calibration to mutli-index models. Note that to apply the angular predictor \eqref{werbaobao}, we must estimate cross-index angles $\left\langle {w}_{\star,k}, \widehat{w}_{\ell}\right\rangle_{\Sigma}$, similarly to the single-index case. Though no estimator is given in literature as far as we know, recent theory for multi-index models \cite{troiani2024fundamental} suggests that analogues of the single-index angle estimators \cite{bellec2022observable} are feasible. We leave derivation of these estimators to future works. 

\begin{theorem}[Angular calibration for multi-index models]
    Let $\widehat W=[\widehat w_1,\ldots,\widehat w_K]\in\mathbb R^{d\times K}$ be any estimator. Set
$$
D:=\operatorname{diag}(\rVert \widehat w_1 \rVert_\Sigma, \ldots, \rVert \widehat w_K \rVert_\Sigma), \quad
S := D^{-1}\widehat W^\top x_{\mathrm{new}} \in \mathbb{R}^K.
$$
Then we have $K\times K$ covariance blocks
$$
\operatorname{Cov}(G)=W_\star^\top\Sigma W_\star,\quad
R:=\operatorname{Cov}(S)=D^{-1}\widehat W^\top\Sigma \widehat W D^{-1},\quad
C:=\operatorname{Cov}(G,S)=W_\star^\top\Sigma \widehat W D^{-1}
$$
where 

$$R_{k\ell}=\frac{\langle \widehat w_k,\widehat{w}_\ell\rangle _{\Sigma}}{\lVert\widehat{w} _{\ell}\rVert _{\Sigma} \lVert\widehat{w}_k \rVert _{\Sigma}}, \qquad 
C_{k\ell}=\frac{\langle w_{\star k},\widehat{w}_\ell \rangle _\Sigma}{\lVert\widehat{w} _{\ell}\rVert _{\Sigma} \lVert w _{\star k} \rVert _{\Sigma}}
$$

Then, assuming that $R$ is invertible, we may define

$$
\ M_\star:=C R^{-1},\qquad
\Sigma_\star:=\operatorname{Cov}(G)-C R^{-1}C^\top\ 
$$

and we have that  $G\mid S\sim\mathcal N(M_\star S,\Sigma_\star)$. For any factor $L_\star$ with $L_\star L_\star^\top=\Sigma_\star$ and any $Z\sim\mathcal N(0,I_K)$ independent of everything, define the multi-index angular predictor

\begin{equation}\label{werbaobao}
    \ \widehat f_{\mathrm{ang}}\big(\widehat W^\top x_{\mathrm{new}}\big)
:=\mathbb E_{Z}\left[g\big(M_\star S+L_\star Z\big)\right]. \ 
\end{equation}

Then for any $p\in\Delta^{K-1}$ and any $d,n\in\mathbb N_+$,

$$
p-\mathbb E\left[\pi(x_{\mathrm{new}})\ \middle|\ \widehat f_{\mathrm{ang}}(\widehat W^\top x_{\mathrm{new}})=p\right]=0.
$$
\end{theorem}

\begin{proof}
Let $x_{\mathrm{new}}\sim\mathcal N(0,\Sigma)$ and define
\[
G:=W_\star^\top x_{\mathrm{new}}\in\mathbb R^K,\qquad
S:=D^{-1}\widehat W^\top x_{\mathrm{new}}\in\mathbb R^K,
\]
with $D=\mathrm{diag}(\|\widehat w_1\|_\Sigma,\ldots,\|\widehat w_K\|_\Sigma)$ and
$\|u\|_\Sigma:=\sqrt{u^\top\Sigma u}$, $\langle u,v\rangle_\Sigma:=u^\top\Sigma v$.
Set the $2K\times d$ linear map
\[
T:=\begin{bmatrix}
W_\star^\top\\[2pt] D^{-1}\widehat W^\top
\end{bmatrix},
\qquad
Y:=\begin{bmatrix}G\\ S\end{bmatrix}
= T\,x_{\mathrm{new}}.
\]
Since $x_{\mathrm{new}}$ is Gaussian and $Y$ is a linear transform, $Y$ is jointly Gaussian with mean $0$ and covariance
\[
\operatorname{Cov}(Y)=T\Sigma T^\top=
\begin{bmatrix}
W_\star^\top\Sigma W_\star & W_\star^\top\Sigma \widehat W D^{-1}\\
D^{-1}\widehat W^\top\Sigma W_\star & D^{-1}\widehat W^\top\Sigma \widehat W D^{-1}
\end{bmatrix}.
\]
Thus,
\[
\operatorname{Cov}(G)=W_\star^\top\Sigma W_\star,\quad
R:=\operatorname{Cov}(S)=D^{-1}\widehat W^\top\Sigma \widehat W D^{-1},\quad
C:=\operatorname{Cov}(G,S)=W_\star^\top\Sigma \widehat W D^{-1}.
\]
In particular, for $k,\ell\in[K]$,
\[
R_{k\ell}=\frac{\langle \widehat w_k,\widehat w_\ell\rangle_\Sigma}{\|\widehat w_k\|_\Sigma\,\|\widehat w_\ell\|_\Sigma},
\qquad
C_{k\ell}=\frac{\langle w_{\star k},\widehat w_\ell\rangle_\Sigma}{\|w_{\star k}\|_\Sigma\,\|\widehat w_\ell\|_\Sigma}.
\]

Define the $K\times K$ matrices
\[
M_\star:=C R^{-1},\qquad
\Sigma_\star:=\operatorname{Cov}(G)-C R^{-1}C^\top.
\]

Consider the linear residual
\[
U:=G-M_\star S=G-CR^{-1}S.
\]
Because $Y$ is Gaussian, $U$ is Gaussian; further,
\[
\operatorname{Cov}(U,S)=\operatorname{Cov}(G,S)-CR^{-1}\operatorname{Cov}(S,S)
= C-CR^{-1}R=0,
\]
so $U$ and $S$ are independent (uncorrelated jointly Gaussian vectors are independent). Moreover,
\[
\operatorname{Cov}(U)=\operatorname{Cov}(G)-CR^{-1}C^\top=\Sigma_\star.
\]
Hence we have the orthogonal decomposition
\[
G = M_\star S + U,\qquad U\ \perp\!\!\!\perp\ S,\qquad U\sim\mathcal N(0,\Sigma_\star).
\]
Equivalently, the conditional distribution is
\[
G\,\big|\,S\,\sim\,\mathcal N\big(M_\star S,\;\Sigma_\star\big),
\]
which is the standard multivariate normal conditioning formula via the Schur complement.

Finally, let $L_\star$ be any matrix satisfying $L_\star L_\star^\top=\Sigma_\star$ and let $Z\sim\mathcal N(0,I_K)$ independent of $(G,S)$. Then $U\stackrel{d}{=}L_\star Z$ and
\[
G \,\big|\, S \ \stackrel{d}{=}\ M_\star S + L_\star Z.
\]
Therefore, for any measurable $g$ (vector- or scalar-valued) with the requisite integrability,
\[
\mathbb E\!\left[g(G)\mid S\right]
= \mathbb E_Z\!\left[g\big(M_\star S+L_\star Z\big)\right],
\]
which yields the stated multi-index angular predictor
\[
\widehat f_{\mathrm{ang}}\big(\widehat W^\top x_{\mathrm{new}}\big)
:= \mathbb E_Z\!\left[g\big(M_\star S+L_\star Z\big)\right].
\]

To conclude, set \(X:=\pi(x_{\mathrm{new}})=g(G)\in\Delta^{K-1}\) and
\[
Y\;:=\;\widehat f_{\mathrm{ang}}\big(\widehat W^\top x_{\mathrm{new}}\big)
=\mathbb E\!\left[g(G)\mid S\right].
\]
Then \(Y\) is \(\sigma(S)\)-measurable and \(Y=\mathbb E[X\mid S]\) almost surely. We claim that
\[
\mathbb E[X\mid Y]=Y\qquad\text{a.s.}
\]
Indeed, for any bounded measurable \(\varphi:\Delta^{K-1}\to\mathbb R\),
\[
\mathbb E\!\big[\varphi(Y)\,(X-Y)\big]
=\mathbb E\!\Big[\ \mathbb E\!\big[\varphi(Y)\,(X-Y)\mid S\big]\ \Big]
=\mathbb E\!\big[\varphi(Y)\,\big(\mathbb E[X\mid S]-Y\big)\big]=0,
\]
so \(\mathbb E[X\mid Y]=Y\) (coordinate-wise) by the defining property of conditional expectation.

By the existence of regular conditional expectations, this implies that for \(\mathbb P_Y\)-almost every \(p\in\Delta^{K-1}\),
\[
\mathbb E\!\left[\pi(x_{\mathrm{new}})\ \middle|\ \widehat f_{\mathrm{ang}}(\widehat W^\top x_{\mathrm{new}})=p\right]
=\mathbb E[X\mid Y=p]
=p,
\]
i.e.,
\[
p-\mathbb E\!\left[\pi(x_{\mathrm{new}})\ \middle|\ \widehat f_{\mathrm{ang}}(\widehat W^\top x_{\mathrm{new}})=p\right]=0.
\]
This establishes exact calibration of the multi-index angular predictor. \qedhere

\end{proof}

\section{Pseudocode}\label{peudo}

\begin{algorithm}[H]
\caption{Angular Calibration}
\label{alg:angular-calibration}
\DontPrintSemicolon
\SetKwInOut{Input}{Input}\SetKwInOut{Output}{Output}

\Input{%
Training data $\{(x_i,y_i)\}_{i=1}^n$; link $\sigma$; convex loss $\ell$ and penalty $g$;\;
covariance $\Sigma$ (known or estimated from unlabeled data);\;
holdout set $\{(x^{\ho}_i,y^{\ho}_i)\}_{i=1}^{n_{\ho}}$ for sign estimation;\;
}
\Output{%
Calibrated predictor $\hat f_{\mathrm{ang}}(x)$ that returns $\hat p\in[0,1]$ for any new $x$.
}

\BlankLine
\textbf{(A) Fit base linear model}\;
$\widehat{w} \gets \arg\min_{w}\ \frac{1}{n}\sum_{i=1}^n \ell_{y_i}(w^\top x_i) + g(w)$\;
$\|\widehat{w}\|_{\Sigma} \gets (\widehat{w}^\top \Sigma\,\widehat{w})^{1/2}$\;

\BlankLine
\textbf{(B) Observable magnitude of $\langle w_\star,\widehat{w}\rangle_\Sigma$}\;
$\hat{\psi}_i \gets -\,\ell'_{y_i}(x_i^\top \widehat{w})$;\quad $\hat{\psi}\gets(\hat{\psi}_1,\ldots,\hat{\psi}_n)^\top$\;
$D \gets \mathrm{diag}\!\big(\ell''_{y}(X\widehat{w})\big)$;\quad $\hat{H} \gets \big(X^\top D X + n\,\nabla^2 g(\widehat{w})\big)^{-1}$\;
$\hat{v}\gets \frac{1}{n}\operatorname{Tr}(D - D X\hat{H}X^\top D)$;\quad $\hat{\gamma}\gets \operatorname{Tr}(X\hat{H}X^\top D)$;\quad $\hat{r}^2\gets \|\hat{\psi}\|^2/n$\;
$\displaystyle
\hat{a}_*^2 \gets
\frac{\left(\frac{\hat{v}}{n}\|X \widehat{w}-\hat{\gamma} \hat{\psi}\|^{2}+\frac{1}{n} \hat{\psi}^{\top} X \widehat{w}-\hat{\gamma}\hat{r}^2\right)^{2}}%
{\frac{1}{n^{2}}\left\|\Sigma^{-\frac{1}{2}} X^{\top} \hat{\psi}\right\|^{2}+\frac{2 \hat{v}}{n} \hat{\psi}^{\top} X \widehat{w}+\frac{\hat{v}^{2}}{n}\|X \widehat{w}-\hat{\gamma} \hat{\psi}\|^{2}-\frac{d}{n}\hat{r}^2}$ \tcp*[f]{Est. of $\langle w_\star,\widehat{w}\rangle_\Sigma^2$}\;

\BlankLine
\textbf{(C) Sign via holdout correlation}\;
$\widehat{\mathrm{sgn}} \gets \mathrm{sign}\!\left(\sum_{i=1}^{n_{\ho}} (\widehat{w}^\top x^{\ho}_i)\,y^{\ho}_i\right)$\;

\BlankLine
\textbf{(D) Angle \& interpolation weights}\;
$\displaystyle c \gets \frac{\widehat{\mathrm{sgn}}\,\sqrt{\hat{a}_*^2}}{\|\widehat{w}\|_{\Sigma}}$;\quad
$c \gets \min\{1,\max\{-1,\,c\}\}$ \tcp*[f]{numerical clip}\;
$\hat{\theta}\gets \arccos(c)$;\quad $\alpha\gets \cos\hat{\theta}=c$;\quad $\beta\gets \sin\hat{\theta}=\sqrt{1-\alpha^2}$\;

\BlankLine
\textbf{(E) Define calibrated predictor $\hat f_{\mathrm{ang}}$}\;
\textit{For any new $x$:}\;
$u \gets \widehat{w}^\top x$;\quad $s \gets u / \|\widehat{w}\|_{\Sigma}$\;

draw $z_1,\ldots,z_M \overset{iid}{\sim}\mathcal N(0,1)$ and set
$\hat f_{\mathrm{ang}}(x)\approx \frac{1}{M}\sum_{j=1}^M \sigma(\alpha s + \beta z_j)$\;
\Return $\hat f_{\mathrm{ang}}(x)$

\end{algorithm}

\newpage
\section{Additional plots}

\subsection{Universality}\label{universalityplots}
\begin{figure}[H]
    \centering
    \includegraphics[width=0.32\linewidth]{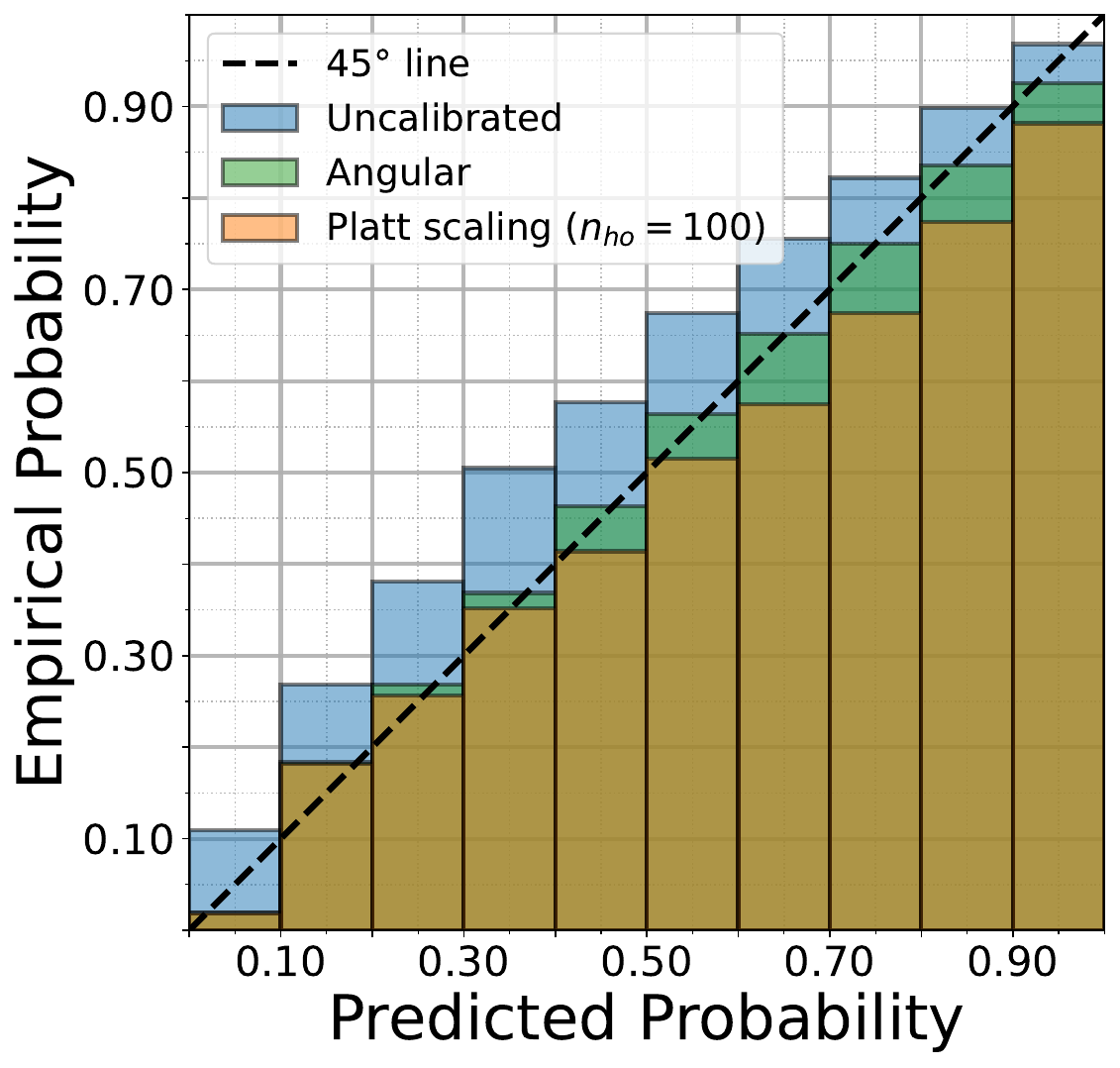}
    \includegraphics[width=0.32\linewidth]{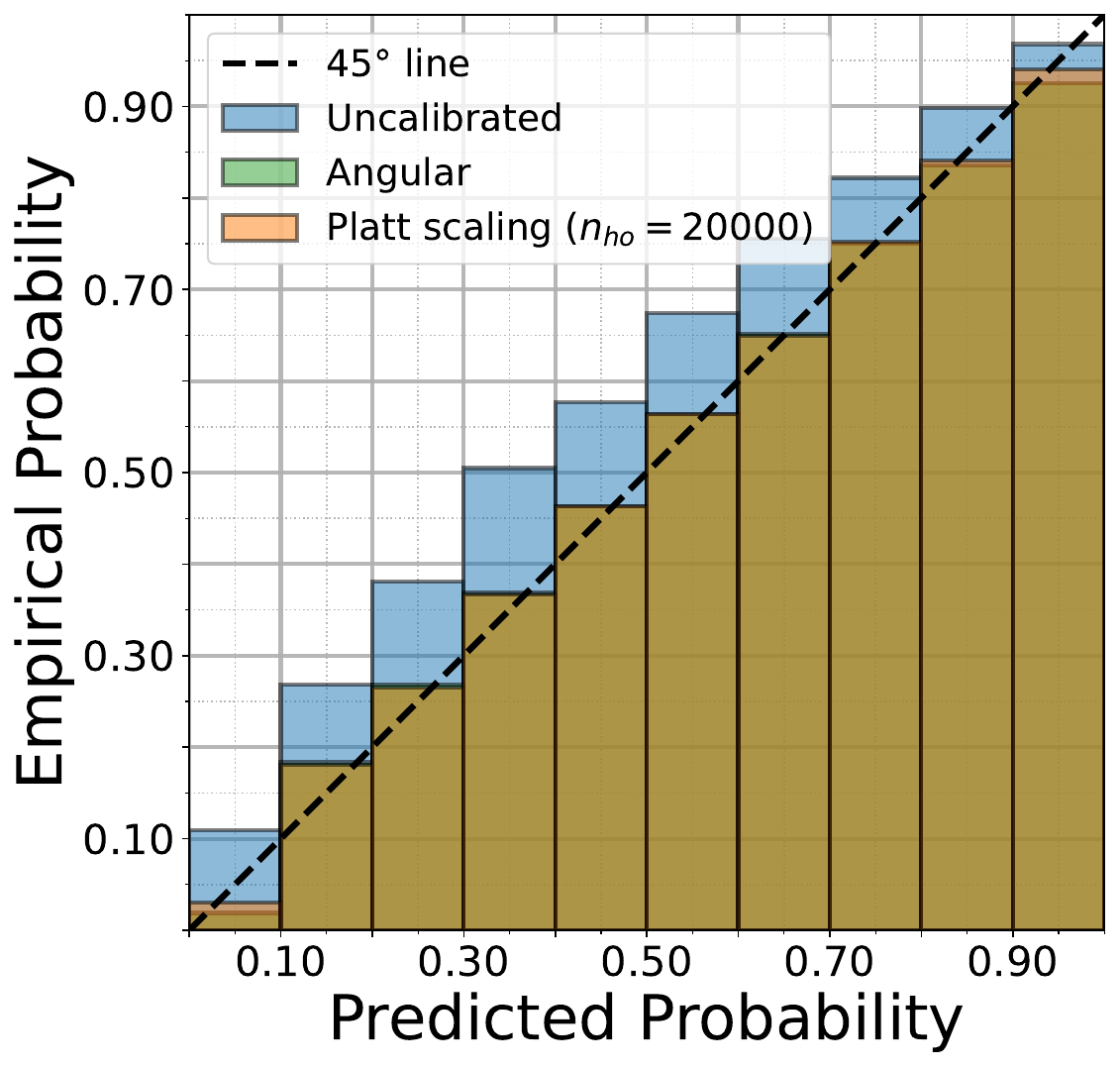}
    \includegraphics[width=0.315\linewidth]{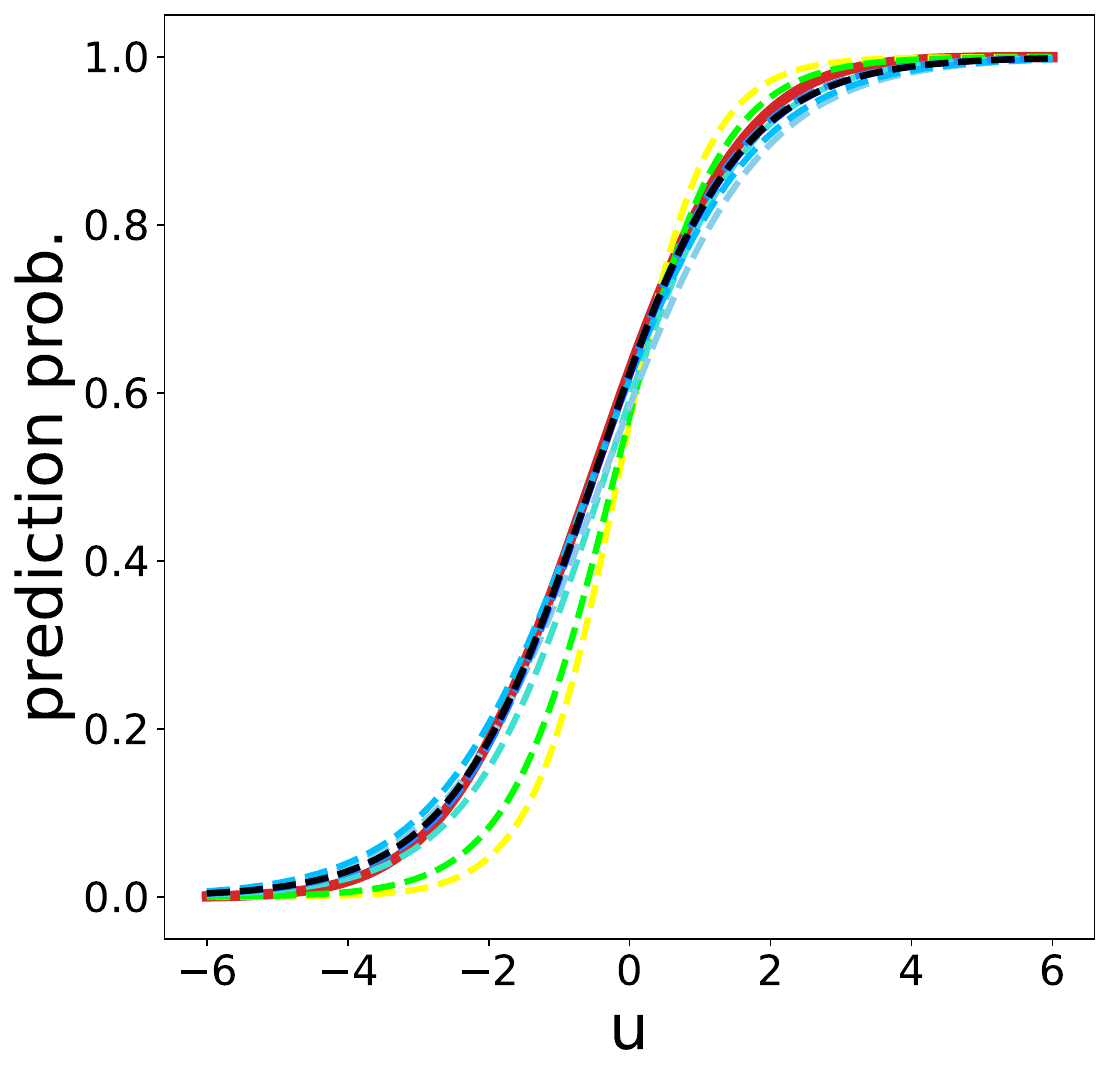}
    \includegraphics[width=0.32\linewidth]{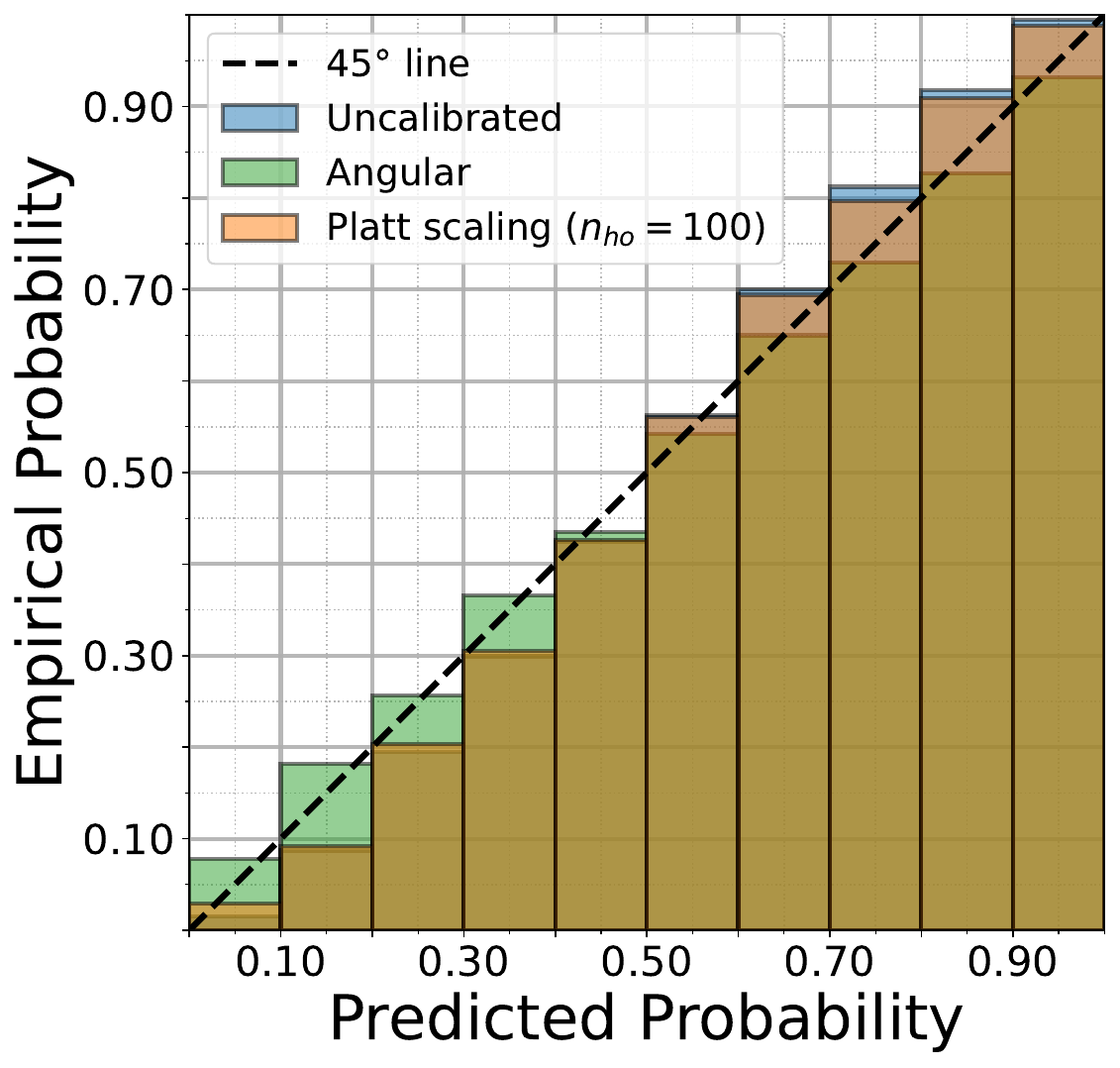}
    \includegraphics[width=0.32\linewidth]{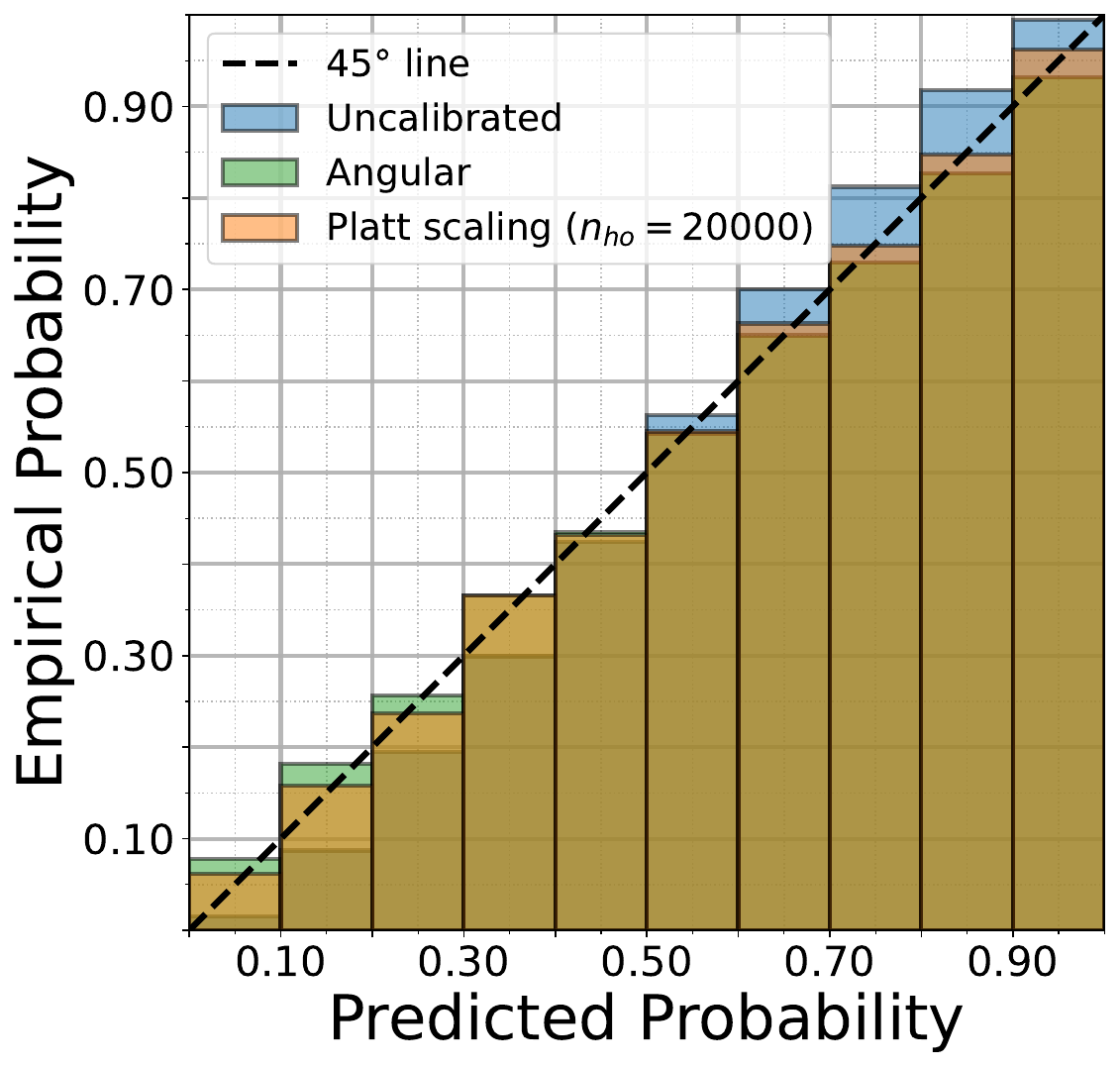}
    \includegraphics[width=0.315\linewidth]{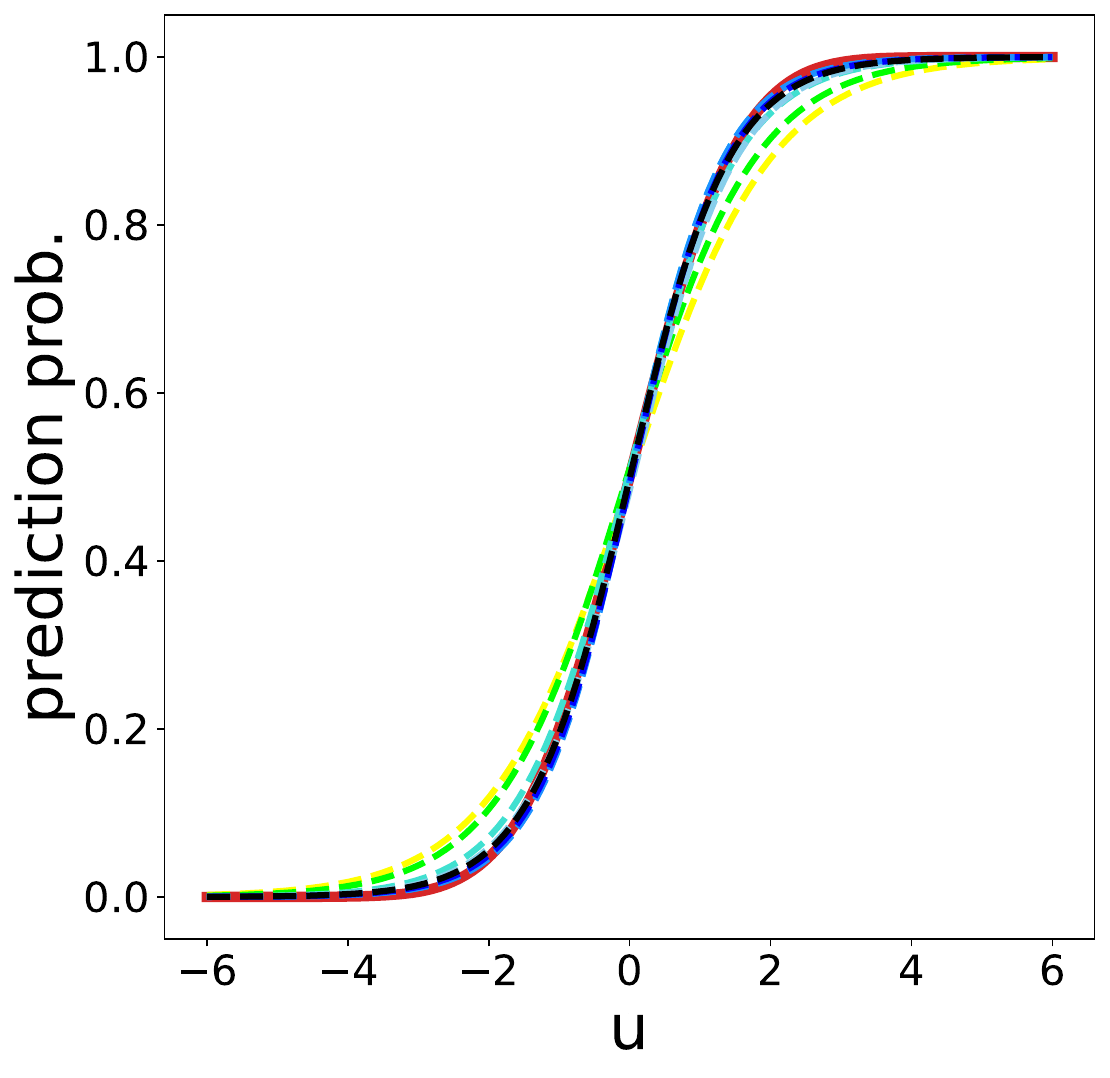}
    \caption{Reproduce \Cref{fig:asymp} (in the third column) and \ref{fig:combinedcalib} (in first two columns) for Rademacher entries. Upper Row: rerun simulations in \Cref{simulation} but with subGaussian designs $W\Sigma^{1/2}$ where $W_{ij}$ are sampled iid from Rademacher distribution, taking values $+1,-1$ with equal probability.  Bottom Row: we replace the sigmoid link function in \Cref{simulation} with a clipped relu link function $\sigma(x)=\mathrm{clip}(3x+0.5)$ where $\mathrm{clip}(x)=x,\forall x\in [0,1]$, $\mathrm{clip}(x)=0, \forall x<0$ and $\mathrm{clip}(x)=1,\forall x>1$. }
    \label{fig:Rad}
\end{figure}

\newpage
\begin{figure}[H]
    \centering
    \includegraphics[width=0.32\linewidth]{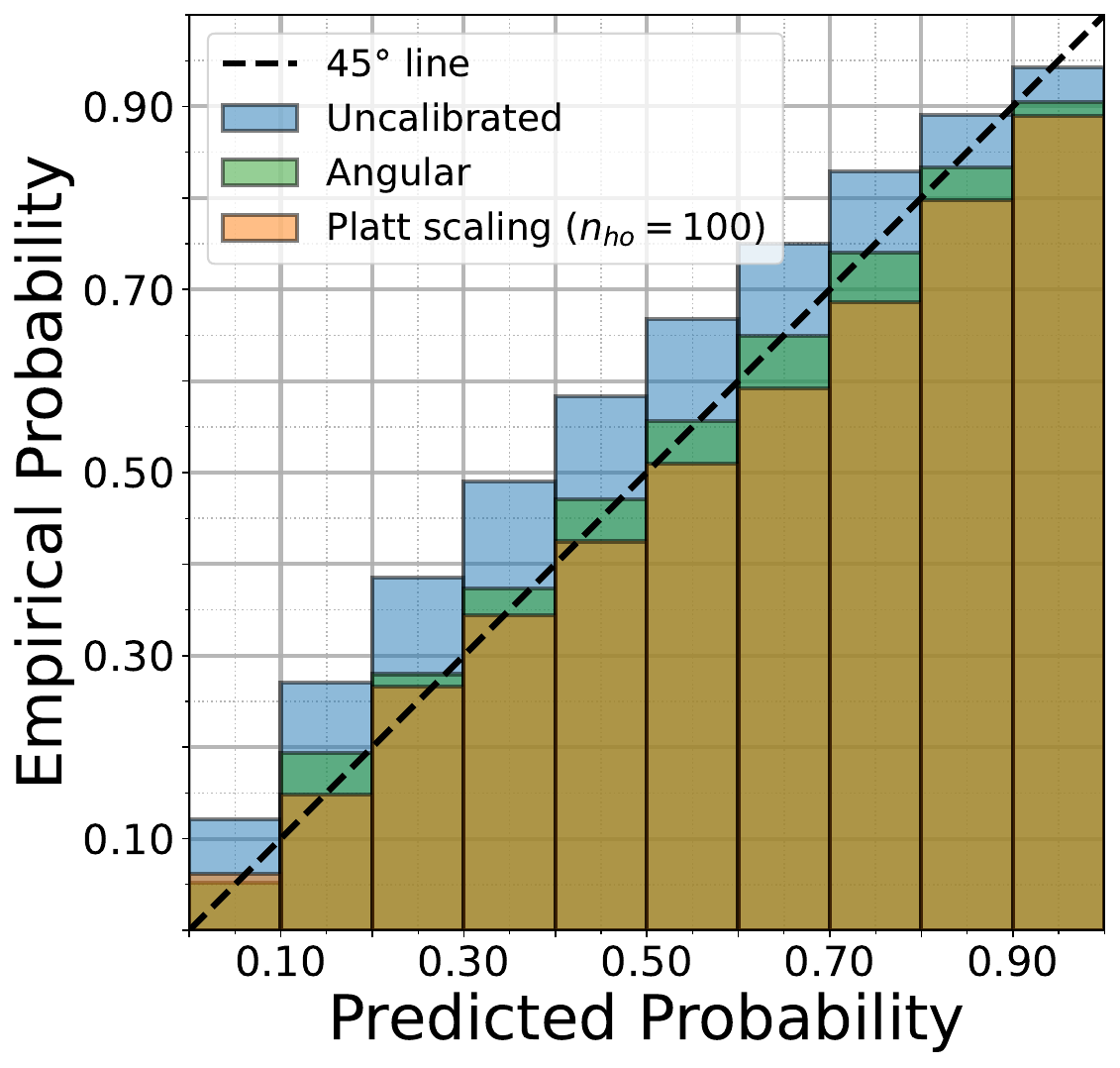}
    \includegraphics[width=0.32\linewidth]{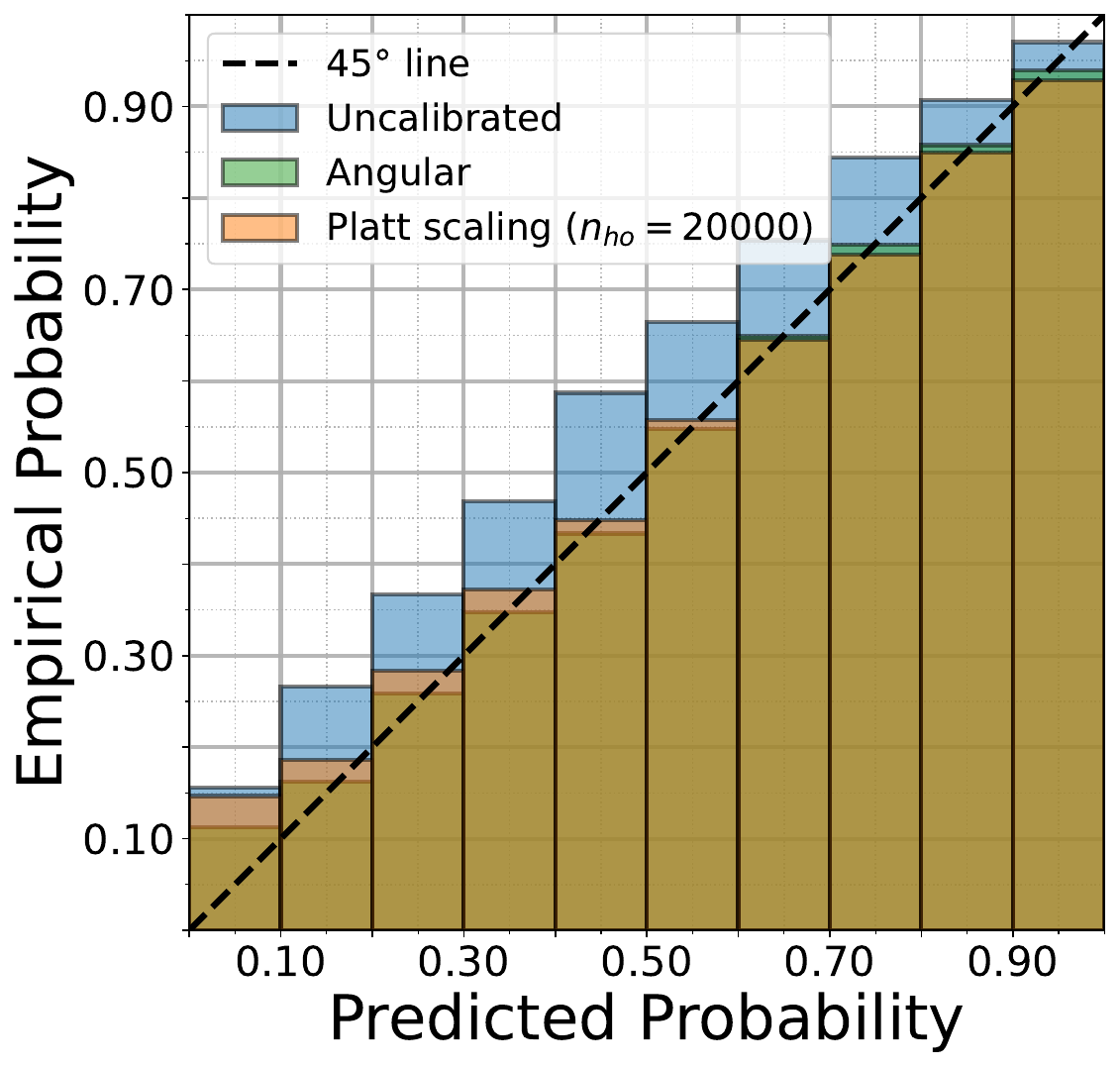}
    \includegraphics[width=0.315\linewidth]{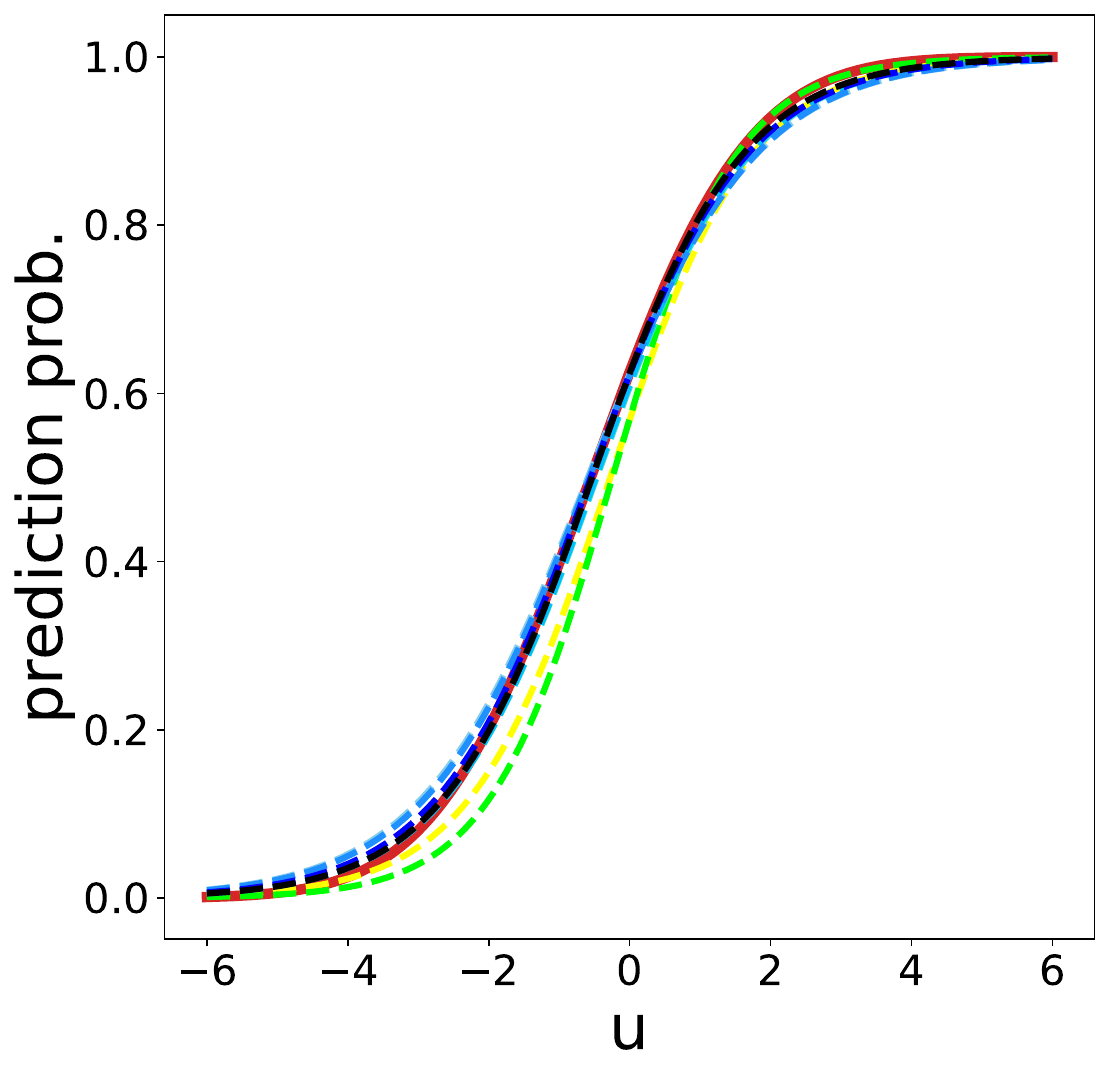}
    \includegraphics[width=0.32\linewidth]{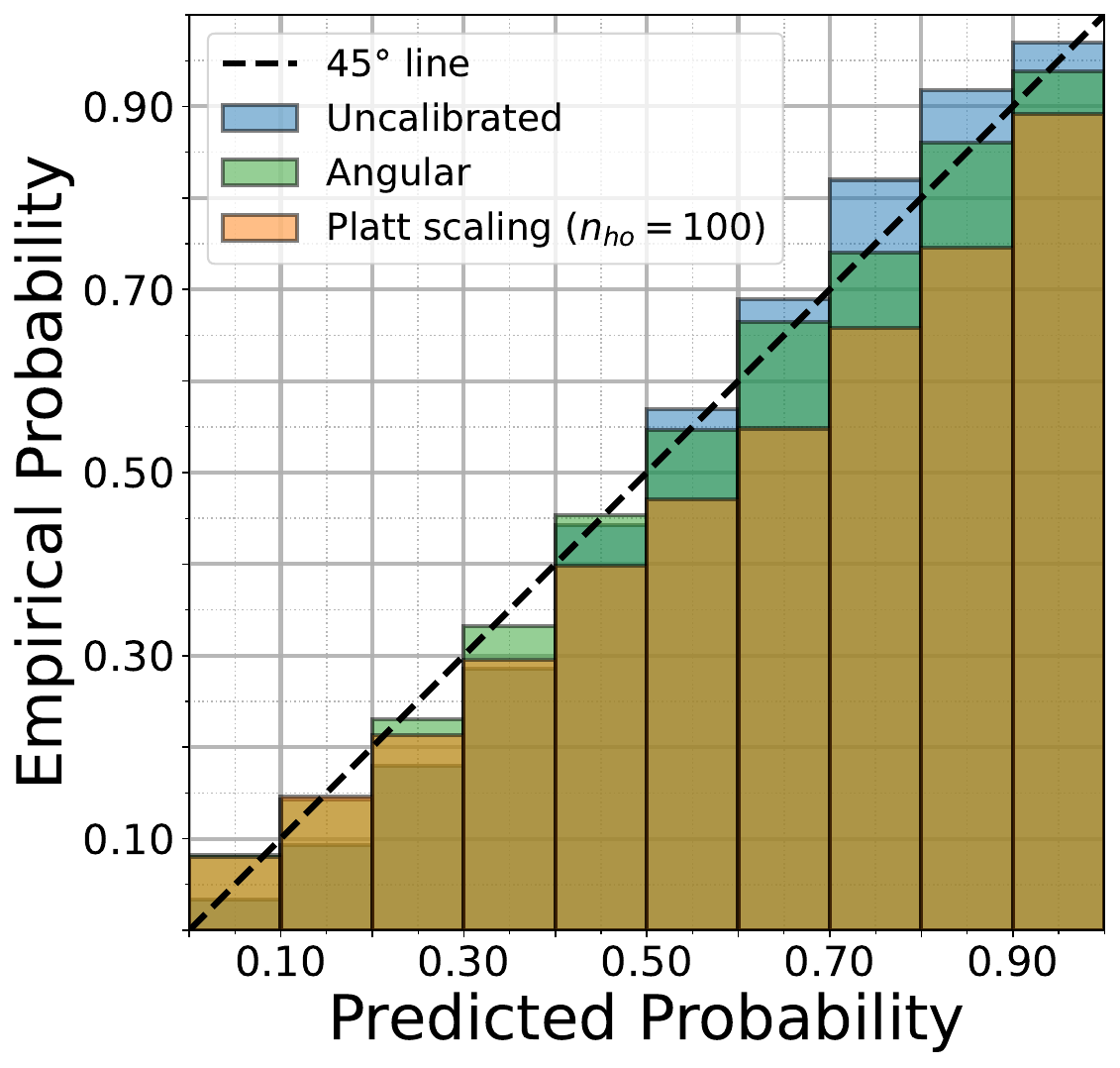}
    \includegraphics[width=0.32\linewidth]{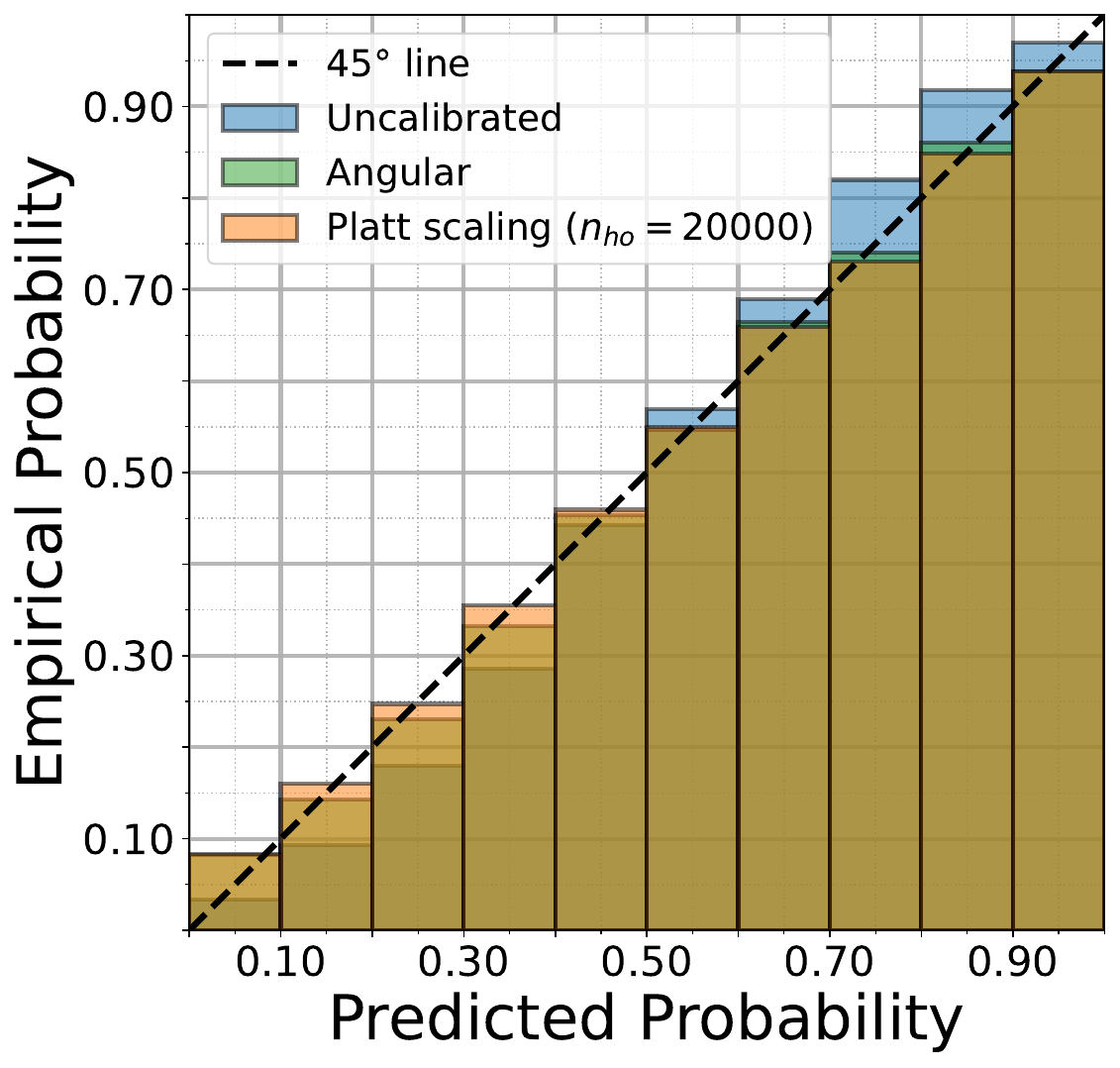}
    \includegraphics[width=0.315\linewidth]{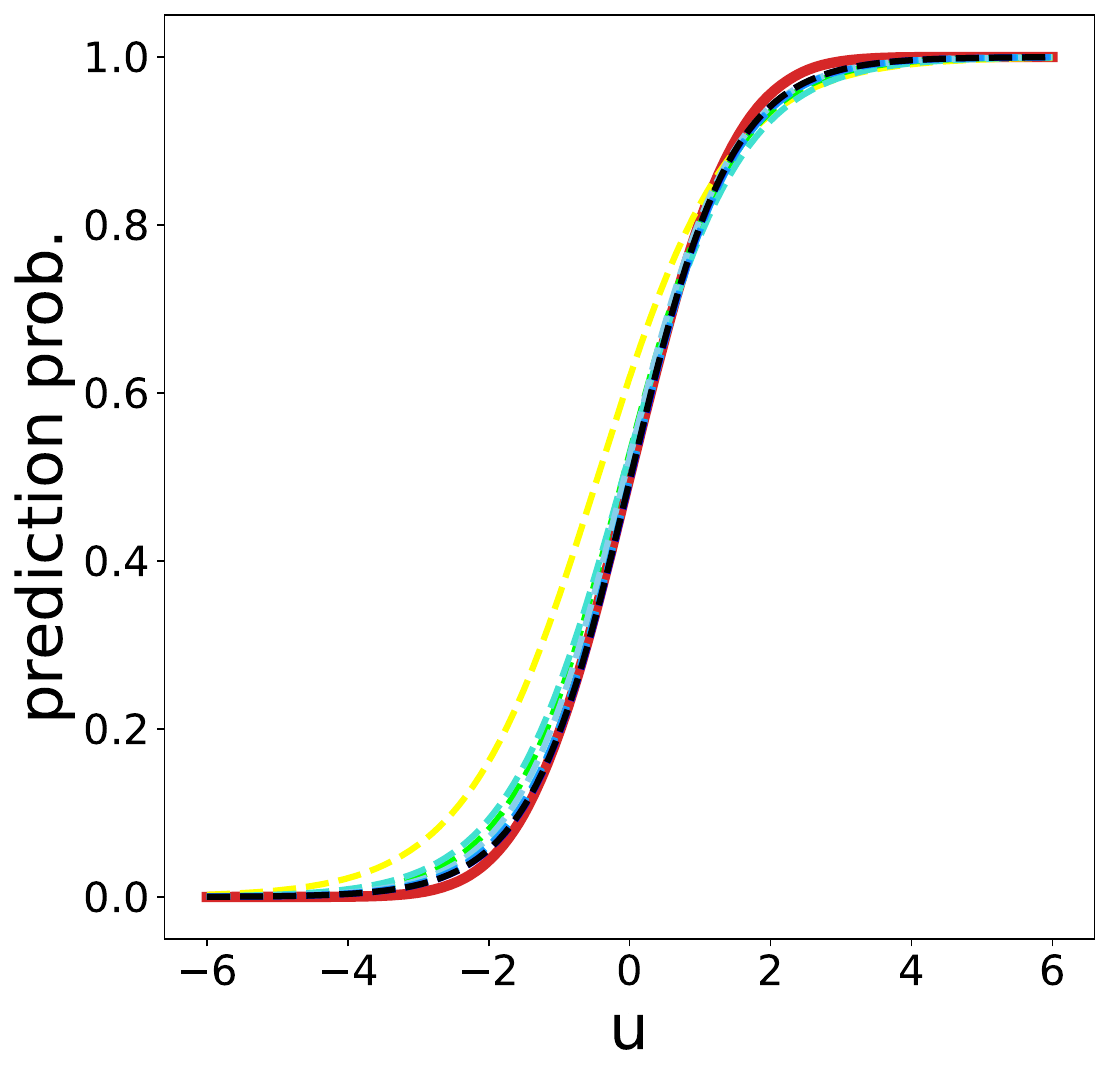}
    \caption{Reproduce \Cref{fig:asymp} (the third column) and \ref{fig:combinedcalib} (first two columns) for uniform entries. Upper Row: rerun simulations in \Cref{simulation} but with non-Gaussian designs $W\Sigma^{1/2}$ where $W_{ij}$ are sampled iid from uniform distribution, taking values in interval $[-\sqrt{12}/2, \sqrt{12}/2]$ uniformly at random.  Bottom Row: we replace the sigmoid link function in \Cref{simulation} with a clipped relu link function $\sigma(x)=\mathrm{clip}(3x+0.5)$ where $\mathrm{clip}(x)=x,\forall x\in [0,1]$, $\mathrm{clip}(x)=0, \forall x<0$ and $\mathrm{clip}(x)=1,\forall x>1$. }
    \label{fig:Unif}
\end{figure}

\newpage
\subsection{Pretrained embeddings and UCI benchmarks}\label{ucideep}

\begin{figure}[H]
    \centering
    \begin{subfigure}{0.3\linewidth}
        \includegraphics[width=\linewidth]{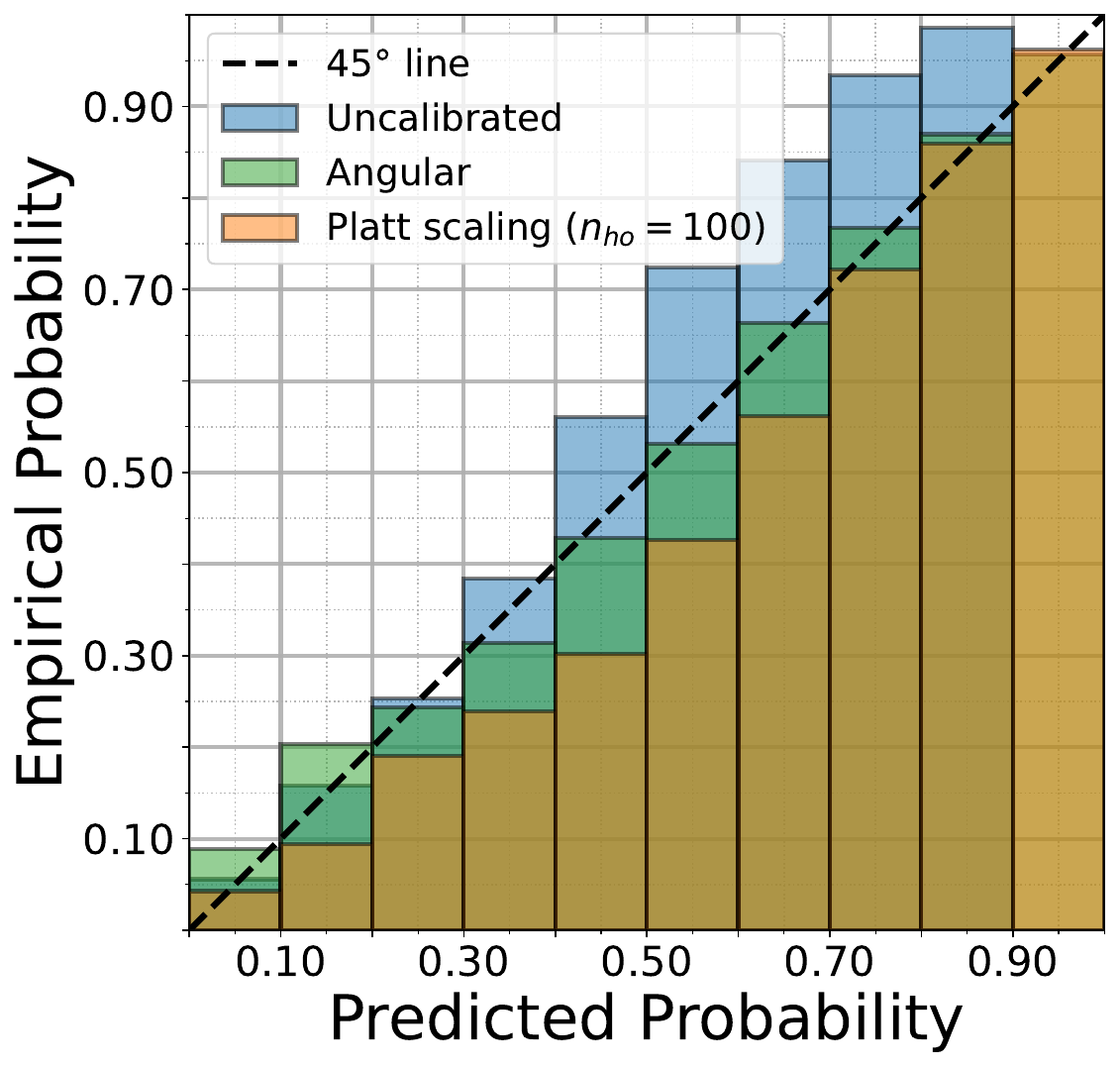}
        \caption{CIFAR-10}
    \end{subfigure}
    \begin{subfigure}{0.3\linewidth}
        \includegraphics[width=\linewidth]{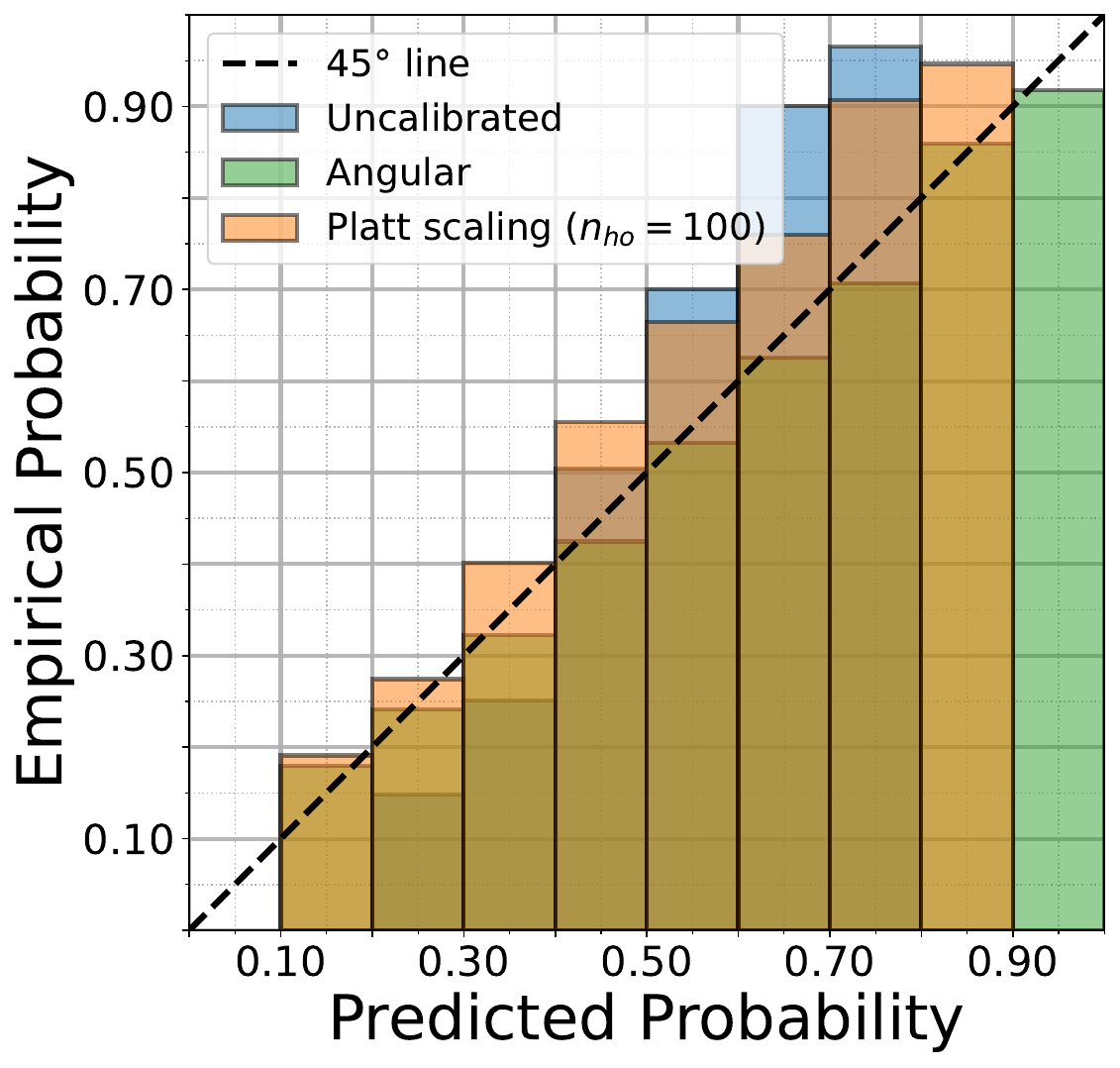}
        \caption{20 Newsgroups}
    \end{subfigure}
    \begin{subfigure}{0.3\linewidth}
        \includegraphics[width=\linewidth]{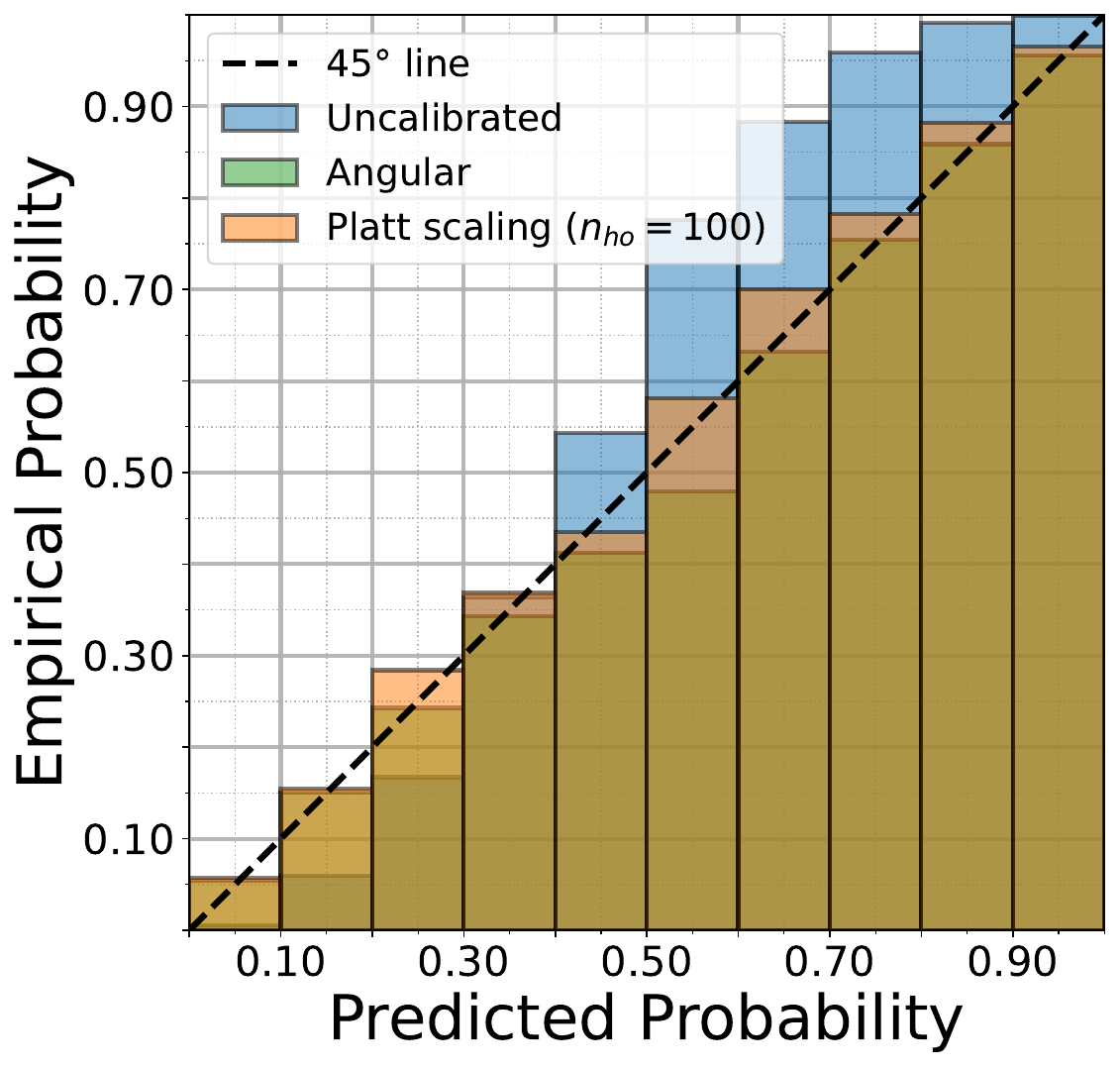}
        \caption{Tox21}
    \end{subfigure}
    \caption{Reliability diagrams for pretrained embeddings (\Cref{tab:semi-real-dl}).}
    \label{fig:reliability-pretrained}
\end{figure}

\subsection{Extended UCI/OpenML benchmarks}\label{sec:uci-extended}

We evaluate angular calibration on 20 additional tabular datasets from the UCI and OpenML repositories \citep{dua2019uci,vanschoren2014openml}, with covariance-pool sizes $n_{\mathrm{cov}}$ ranging from 365 to over 578{,}000. The ECE results are in \Cref{tab:semi-real-uci-ext} and the reliability diagrams are in \Cref{fig:reliability-uci}. Across these 20 datasets, angular calibration achieves the lowest ECE on 15, with a mean ECE of 0.026 compared to 0.049 for Platt scaling ($n_{\mathrm{ho}}=100$) and 0.034 for Platt scaling ($n_{\mathrm{ho}}=500$). The reliability diagrams confirm that the angular predictor's binned probabilities align closely with the 45-degree line across a wide variety of feature types, sample sizes, and dimensionalities. On the five datasets where angular calibration does not achieve the lowest ECE (HAR, Internet Ads, Ozone-Level, Jasmine, plants-shape), the gap to the best method is typically small, and the angular predictor still substantially improves over the uncalibrated baseline.

\begin{table}[H]
\centering
\caption{\textbf{ECE on 20 UCI/OpenML datasets.} Conventions as in Table~\ref{tab:semi-real-dl}.}
\label{tab:semi-real-uci-ext}
\resizebox{\textwidth}{!}{%
\begin{tabular}{l r r r r cccccc}
\toprule
\textbf{Dataset} & $n$ & $d$ & $n_{\mathrm{cov}}$ & $n_{\mathrm{test}}$
& \textbf{Uncal.} & \textbf{Angular} & \textbf{Platt 100} & \textbf{Iso 100} & \textbf{Platt 500} & \textbf{Iso 500} \\
\midrule
HAR                  & 561   & 561    & 8{,}038    & 1{,}200 & 0.1237 & 0.0935 & 0.0954 & 0.0552 & 0.0978 & 0.0328 \\
Gisette              & 204   & 102    & 2{,}658    & 1{,}200 & 0.1184 & 0.0253 & 0.0350 & 0.0362 & 0.0496 & 0.0317 \\
Internet Ads         & 1{,}557  & 3{,}113   & 522     & 700  & 0.0960 & 0.0398 & 0.1053 & 0.0693 & 0.0314 & 0.0345 \\
MNIST                & 784   & 719    & 67{,}216   & 1{,}500 & 0.1852 & 0.0223 & 0.0235 & 0.0569 & 0.0372 & 0.0318 \\
Semeion              & 128   & 256    & 365     & 600  & 0.1271 & 0.0355 & 0.0640 & 0.0879 & 0.0452 & 0.0659 \\
Covertype            & 196   & 98     & 578{,}816  & 1{,}500 & 0.1178 & 0.0097 & 0.0170 & 0.0907 & 0.0267 & 0.0396 \\
Ozone-Level          & 144   & 72     & 1{,}190    & 700  & 0.1158 & 0.0194 & 0.0279 & 0.0702 & 0.0179 & 0.0189 \\
gina\_agnostic       & 485   & 970    & 1{,}783    & 700  & 0.2266 & 0.0297 & 0.0625 & 0.0536 & 0.0298 & 0.0327 \\
USPS                 & 512   & 256    & 7{,}086    & 1{,}200 & 0.1458 & 0.0184 & 0.0384 & 0.0429 & 0.0300 & 0.0428 \\
Christine            & 818   & 1{,}623   & 2{,}900    & 1{,}200 & 0.2526 & 0.0237 & 0.0278 & 0.0545 & 0.0262 & 0.0457 \\
Jasmine              & 560   & 280    & 1{,}224    & 700  & 0.1874 & 0.0300 & 0.1154 & 0.0153 & 0.0364 & 0.0385 \\
Fabert               & 801   & 795    & 5{,}736    & 1{,}200 & 0.1263 & 0.0241 & 0.0585 & 0.0846 & 0.0242 & 0.0785 \\
ada\_agnostic        & 645   & 645    & 1{,}623    & 700  & 0.2000 & 0.0237 & 0.0411 & 0.0533 & 0.0448 & 0.0534 \\
nova                 & 214   & 107    & 12{,}481   & 1{,}200 & 0.1168 & 0.0117 & 0.0263 & 0.0262 & 0.0189 & 0.0183 \\
Cina                 & 614   & 614    & 537{,}069  & 1{,}200 & 0.0852 & 0.0113 & 0.0131 & 0.0249 & 0.0386 & 0.0200 \\
plants-shape         & 128   & 64     & 372     & 600  & 0.2154 & 0.0537 & 0.0429 & 0.0788 & 0.0287 & 0.0545 \\
SpeedDating          & 499   & 499    & 6{,}179    & 1{,}200 & 0.1174 & 0.0096 & 0.0270 & 0.0871 & 0.0220 & 0.0253 \\
Meteo                & 743   & 743    & 14{,}936   & 1{,}200 & 0.1073 & 0.0126 & 0.0299 & 0.0714 & 0.0296 & 0.0408 \\
Diabetes-130US       & 2{,}459  & 2{,}459   & 97{,}607   & 1{,}200 & 0.1098 & 0.0138 & 0.0363 & 0.0478 & 0.0146 & 0.0231 \\
Appliances-Energy    & 9{,}881  & 19{,}762  & 8{,}154    & 1{,}200 & 0.0843 & 0.0162 & 0.0863 & 0.0539 & 0.0218 & 0.0295 \\
\bottomrule
\end{tabular}%
}
\end{table}

\begin{figure}[H]
    \centering
    \includegraphics[width=\textwidth]{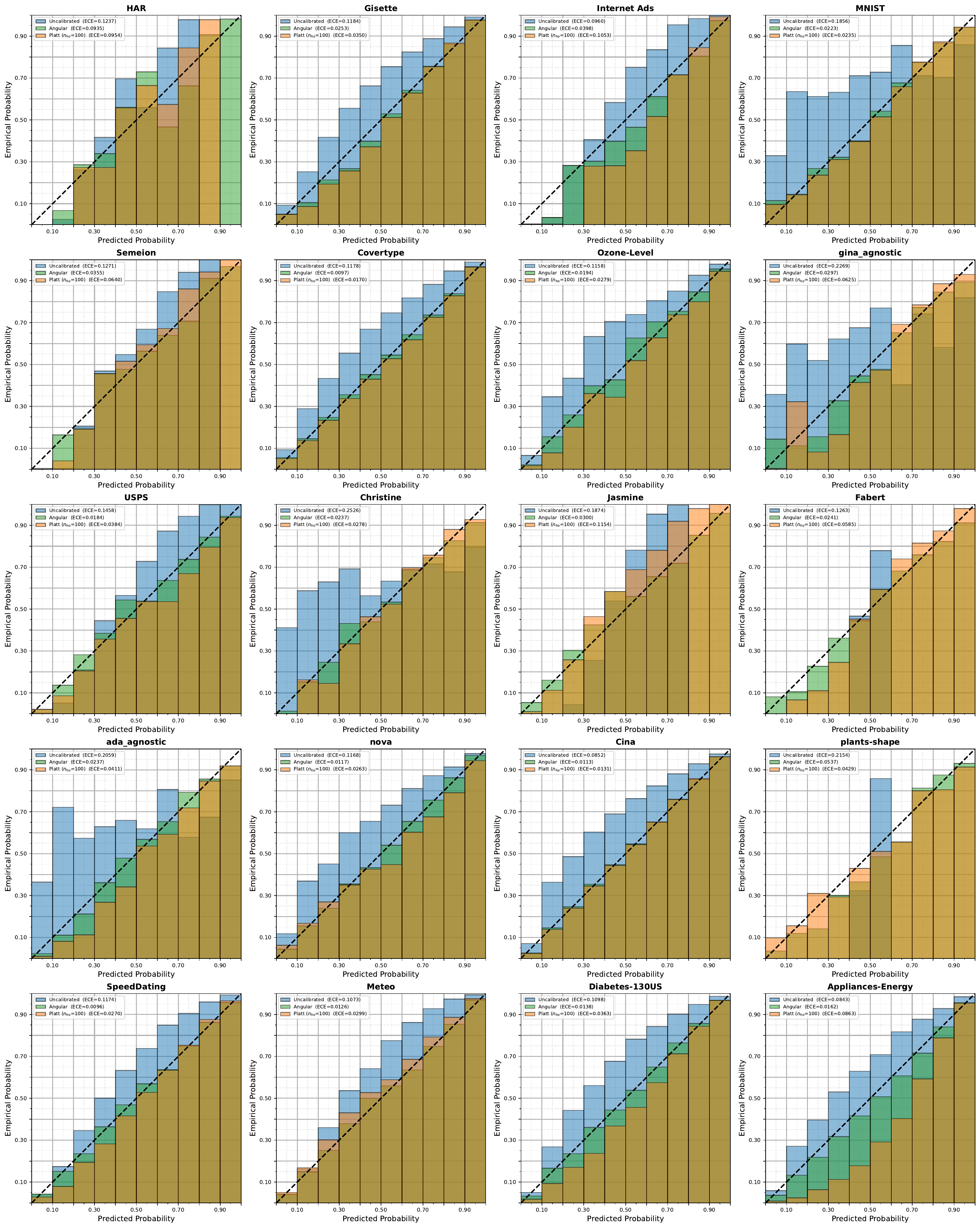}
    \caption{Reliability diagrams for all 20 UCI/OpenML datasets (\Cref{tab:semi-real-uci-ext}). Each panel shows the uncalibrated predictor (blue), angular calibration (green), and Platt scaling with $n_{\mathrm{ho}}=100$ (orange).}
    \label{fig:reliability-uci}
\end{figure}

\subsubsection*{Dataset descriptions}

\begin{itemize}

\item \textbf{HAR} (OpenML 1478). Accelerometer and gyroscope readings from 30 subjects wearing a smartphone on the waist while performing six activities. Features are time- and frequency-domain statistics computed from the raw sensor signals.

\item \textbf{Gisette} (OpenML 1037). A handwritten digit recognition task from the NIPS 2003 Feature Selection Challenge. The binary task distinguishes digits 4 and 9.

\item \textbf{Internet Advertisements} (OpenML 40978). Classifies images on web pages as advertisements or non-advertisements using URL strings, image geometry, and alt-text features.

\item \textbf{MNIST} (OpenML 554). The canonical handwritten digit dataset. Each sample is a $28\times 28$ grayscale image; features are pixel intensities.

\item \textbf{Semeion} (OpenML 1501). Handwritten digits collected from 80 individuals by the Semeion Research Center, Rome. Each digit was scanned and thresholded to a $16\times 16$ binary image.

\item \textbf{Covertype} (OpenML 1596). Forest cover type prediction from cartographic variables obtained from the US Forest Service. Features include elevation, slope, aspect, distances to hydrology and roads, hillshade indices, and wilderness area/soil type indicators.

\item \textbf{Ozone-Level} (OpenML 1487). Ozone level detection (one-hour peak day vs.\ normal day) from meteorological measurements collected in the Houston--Galveston--Brazoria area, Texas.

\item \textbf{gina\_agnostic} (OpenML 1038). A binary classification dataset from the Agnostic Learning vs.\ Prior Knowledge Challenge (2006). Features are derived from image data but no domain information is provided to the learner.

\item \textbf{USPS} (OpenML 41082). US Postal Service handwritten digit recognition dataset. Digits were scanned from envelopes and normalised to $16\times 16$ grayscale images.

\item \textbf{Christine} (OpenML 41142). A binary classification dataset from the AutoML Benchmark suite. The original feature semantics are not disclosed; the dataset is representative of high-dimensional tabular tasks encountered in automated machine learning competitions.

\item \textbf{Jasmine} (OpenML 41143). A binary classification dataset from the AutoML Benchmark suite with undisclosed feature semantics, designed to test general-purpose learning algorithms without domain-specific tuning.

\item \textbf{Fabert} (OpenML 41164). A binary classification dataset from the AutoML Benchmark suite providing a moderately high-dimensional tabular task representative of the proportional asymptotic regime.

\item \textbf{ada\_agnostic} (OpenML 1041). A binary classification dataset from the Agnostic Learning vs.\ Prior Knowledge Challenge. Features are provided without semantic labels, requiring purely data-driven learning.

\item \textbf{nova} (OpenML 1040). A binary classification dataset from the Agnostic Learning vs.\ Prior Knowledge Challenge with tabular features and no domain description.

\item \textbf{Cina} (OpenML 1169). A large binary classification dataset from the Agnostic Learning vs.\ Prior Knowledge Challenge. With over half a million samples, it provides ample unlabelled data for covariance estimation.

\item \textbf{plants-shape} (OpenML 1492). Plant species identification from shape descriptors computed from leaf images. Features capture the margin shape of leaves using a standardised set of descriptors.

\item \textbf{SpeedDating} (OpenML 40536). Data from a speed dating experiment conducted at Columbia University. Features include self-reported preferences, ratings, and demographic variables, extensively one-hot encoded.

\item \textbf{Meteo} (UCI 275). Bias correction of numerical weather prediction model temperature forecasts for Seoul, South Korea (2013--2017). Meteorological observations and model outputs are combined with one-hot encoding of station and temporal variables.

\item \textbf{Diabetes-130US} (UCI 296). Clinical records from 130 US hospitals over ten years (1999--2008) for diabetic inpatients. Features include admission type, diagnoses, medication changes and lab results, with one-hot encoding of categorical variables.

\item \textbf{Appliances-Energy} (UCI 374). Energy consumption measurements from a low-energy residential building. The high dimensionality arises from extensive one-hot encoding of temporal and categorical variables.

\end{itemize}

\end{document}